\newcommand\BibTeX{{\rmfamily B\kern-.05em \textsc{i\kern-.025em b}\kern-.08em
T\kern-.1667em\lower.7ex\hbox{E}\kern-.125emX}}
\newtheorem*{conj}{Conjecture}
\providecommand{\abs}[1]{\left\lvert#1\right\rvert}
\DeclareMathOperator{\supp}{supp}
\begin{document}

\title{Periodic and chaotic dynamics in a map-based neuron model}
\author[1]{Frank Llovera Trujillo}
\author[2,3]{Justyna Signerska-Rynkowska}
\author[2]{Piotr Bart{\l}omiejczyk*}

\authormark{FRANK LLOVERA TRUJILLO \textsc{et al}}

\address[1]{\orgdiv{Doctoral School}, 
\orgname{Gda\'nsk University of Technology}, 
\orgaddress{\state{Gdańsk}, 
\country{Poland}}}

\address[2]{\orgdiv{Faculty of Applied Physics and 
Mathematics and BioTechMed Centre}, 
\orgname{Gda\'nsk University of Technology},
\orgaddress{\state{Gdańsk}, 
\country{Poland}}}

\address[3]{\orgdiv{Dioscuri Centre in Topological Data Analysis},
\orgname{Institute of Mathematics of 
the Polish Academy of Sciences}, 
\orgaddress{\state{Warsaw}, 
\country{Poland}}}

\corres{*Piotr Bart{\l}omiejczyk, 
Faculty of Applied Physics and 
Mathematics and BioTechMed Centre,
Gda\'nsk University of Technology, 
Gabriela Narutowicza 11/12,\, 80-233 Gda{\'n}sk, Poland. 
\email{piobartl@pg.edu.pl}}

\keywords{Neuronal dynamics; S-unimodal map; 
flip bifurcation; fold bifurcation; chaos.\\[2mm]
\textbf{MSC CLASSIFICATION:}\\
Primary: 37E05, 37N25; Secondary: 37G99, 92C20
}

\abstract[Abstract]{
Map-based neuron models are an important tool in modelling neural
dynamics and sometimes can be considered as an alternative to usually
computationally costlier models based on continuous or hybrid
dynamical systems. However, due to their discrete nature, rigorous
mathematical analysis might be challenging. We study a discrete model
of neuronal dynamics introduced by Chialvo [Chaos, Solitons \&
Fractals~5, 1995, 461--479]. In particular, we show that its reduced
one-dimensional version can be treated as an independent simple model
of neural activity where the input and the fixed value of the recovery
variable are parameters. This one-dimensional model still displays
very rich and varied dynamics. Using the fact that the map whose
iterates define voltage dynamics is S-unimodal, we describe in detail
both the periodic behaviour and the occurrence of different notions of
chaos, indicating corresponding regions in parameter space.
Our study is also complemented by a bifurcation analysis of 
the mentioned dynamical model.}

\maketitle

\section*{Introduction} 

In the last decades many different types 
of neuron models have been developed, including conductance-based
models (with the pioneering example of the experimentally derived
Hodgkin-Huxley model \cite{Hodgkin} or Morris-Lecar model
\cite{MorrisLecar}) and other models based on continuous dynamical
systems, i.e., differential equations (e.g. FitzHugh-Nagumo model
\cite{Fitz,Nagumo}), models based on discrete dynamical systems,
i.e., iterates of functions (so-called \emph{map-based models}), 
up to hybrid models which combine ordinary differential equations
(accounting for input integration) with discrete events (resetting
mechanism  accounting for action potential), see e.g.
\cite{BretteGerstner,izi2003,wild2,touboul2009}.

In this paper we study the following map-based neuron model,
introduced in \cite{chialvo1995}:
\begin{subequations}
\begin{align}
        x_{n+1}&=f(x_n,y_n)=x_n^2\exp{(y_n-x_n)}+k, \label{eq:Chialvo1} \\
        y_{n+1}&=g(x_n,y_n)=a y_n-b x_n +c. \label{eq:Chialvo2}
\end{align}
\end{subequations}
This model will be referred to as the 2D Chialvo model 
(or full Chialvo model). In the 2D Chialvo model
\eqref{eq:Chialvo1}--\eqref{eq:Chialvo2} $x$ is a membrane
voltage-potential (the most important dynamical variable in all 
the neuron models) and $y$ is so-called recovery variable. 
The time-constant $a\in (0,1)$, the activation-dependence 
$b\in (0,1)$ and the offset  $c>0$ are the real parameters 
connected with the recovery process. In turn, $k\geq 0$ can 
be interpreted as an additive perturbation of the voltage.  
In our analysis, the 1-dimensional subsystem
\begin{equation}
\label{eq:Chialvo1DIM}
        x_{n+1}=f(x_n,r)=x_n^2\exp{(r-x_n)}+k,        
\end{equation}
where $r\in\mathbb{R}$ is a parameter, will be referred to as 
the 1D or reduced Chialvo model. 

Other examples of map-based models are, among others,
Cazelles-Courbage-Rabinovich model \cite{cazelles2001}, 
different types of Rulkov models
\cite{rulkov2001,rulkov2002,ShilnikovRulkov2004} or
Courbage-Nekorkin-Vdovin model \cite{courbage2007}. 
In particular, the work of Zhong et al. \cite{ZhongEtAll2017} 
is devoted to the non-chaotic
Rulkov-model \cite{rulkov2002}, 
where the analysis, similarly as in our case, firstly concerns
bifurcations in the one-dimensional reduced model, i.e., 
fast subsystem describing the evolution of the membrane voltage, 
and shows that it produces fold and flip bifurcations. 
This subsystem also produces chaos as it is shown by numerical
simulations. Different examples of map-based neuron models are
discussed e.g. in review articles \cite{courbage2010,ibarz2011} 
(see also references therein).

In this work we obtain rigorous characterization of chaos and
co-dimension-one bifurcations of fixed points in the subsystem
\eqref{eq:Chialvo1DIM}, relevant for the classification of bursting
neurons. Before we start let us summarize the known results on 
Chialvo model and introduce notations. Obviously, 
the fixed points $(x_f,y_f)$ of the full model must satisfy 
\begin{align*}
    x_{f}&=x_f^2\exp(y_f-x_f)+k,\\
    y_{f}&=(c-b x_f)/(1-a).
\end{align*}
In particular, for $k=0$ the point $(x_{f_0},y_{f_0}):=(0, c/(1-a))$
is always a stable fixed point of the system as can be seen by
calculating corresponding eigenvalues which are $0$ and $a$ 
(and $a<1$ by the assumption). Further, as Chialvo observed 
\cite{chialvo1995}, when $b\ll a$, the initial part of 
the trajectory with initial condition $(x,y_{f_0})$ 
where $x\approx 0$ can be well described by the 1D Chialvo model
\eqref{eq:Chialvo1DIM} with $k=0$ and $r=y_{f_0}=c/(1-a)$. 
Therefore in this parameter regime, among others, the 1D subsystem
gives the information about initial dynamics following the small
perturbation of the resting state $x=0$ in the full model. 
A few other observations for $k>0$ (referring mainly to the situation
where the system \eqref{eq:Chialvo1}--\eqref{eq:Chialvo2} has exactly
one fixed point $(x_f,y_f)$) were made already in Chialvo's work \cite{chialvo1995},
including identification of some regions of bistability (coexistence
of stable resting state and stable periodic oscillations) and regions
of chaotic-like behaviour interspersed with subregions of periodic
behaviour. 

For more results on the full model, we also refer 
the reader to the more recent works \cite{Jing2006,NewPaperOnChialvo}. 
In particular, Jing et al. \cite{Jing2006} studied
analytically the existence and stability of the fixed points,
conditions for existence of flip, fold and Hopf bifurcations 
and chaos in the sense of Marotto's definition. Further, it
numerically reports various period doubling bifurcations and 
routes to chaos, periodic windows in transient chaotic regions 
and strange chaotic and non-chaotic attractors, 
whereas the work of Wang and Cao
\cite{NewPaperOnChialvo} illustrates numerically the existence 
of interesting structures in parameter space. 

As our work is mainly concerned with 
the 1-dimensional subsystem \eqref{eq:Chialvo1DIM} 
we have only briefly described above 
the findings on the full 2D model. 
However, we go back to the analysis 
of the full model at the end of the paper.   
Let us also observe that for $0<\varepsilon=1-a=b=c\ll1$,
the system \eqref{eq:Chialvo1}--\eqref{eq:Chialvo2} can be seen 
as a \emph{slow-fast} (discrete) system, i.e. the system of the form
\begin{subequations}
\label{eq:Chialvo}
\begin{align}
        x_{n+1}&=\tilde{f}(x_n,y_n),       \label{eq:small1} \\
        y_{n+1}&=y_n+\varepsilon\tilde{g}(x_n,y_n), \label{eq:small2}
\end{align}
\end{subequations}
where $\varepsilon\to 0$ is a small parameter. For this particular
assumptions on parameters, the separation of timescales in 
the Chialvo model \eqref{eq:Chialvo1}--\eqref{eq:Chialvo2} is
explicit. However, for many other choices of parameter values 
the timescale separation is not so explicit but still visible 
in simulations. In this case, the voltage variable $x$ and 
the recovery variable $y$, can be referred to as \emph{fast} 
and \emph{slow} variables, respectively. Consequently, 
while $x_n$  describes spiking behavior, $y_n$ acts as a slowly
changing parameter (with time-scale variation $\varepsilon \ll 1$)
that modulates the spiking dynamics. 

Models of the form \eqref{eq:small1}--\eqref{eq:small2} 
are usually analized by firstly assuming that $\varepsilon=0$ 
and treating \eqref{eq:small1} as a quazi-static 
approximation of \eqref{eq:small1}--\eqref{eq:small2}
with parameter $y_n\equiv y$. If for some values of $y$
\eqref{eq:small1} exhibits equilibrium dynamics 
(due to the existence of a stable fixed point) and for some other
values it exhibits periodic dynamics (due to the existence of 
a stable periodic orbit), then bursting in the system
\eqref{eq:small1}--\eqref{eq:small2} occurs because slowly 
varying $y_n$ acts as bifurcation parameter that makes 
the dynamics of $x$ switching between these two regimes 
(similar approach can be applied for ODE bursting systems, compare
e.g. with \cite{Rinzel1987}). Therefore bursting behaviour in 
the above neuron model is indeed directly connected with the types 
of bifurcations present in the fast system \eqref{eq:small1}
(correspondingly, in  \eqref{eq:Chialvo1} or \eqref{eq:Chialvo1DIM}).
According to this observation, in \cite{iziHoppen2004} 
the classification of bursting mappings was obtained, 
taking into account that there are only four possible
co-dimension-1 bifurcations of a stable fixed point 
in maps \eqref{eq:small1} that lead to its
loss of stability or disappearance (fold, SNIC-saddle node 
on invariant circle bifurcation and supercritical 
and subcritical flip bifurcations) and only five 
co-dimension one bifurcations of a stable
periodic orbit, which make the dynamics settle 
on a stable equilibrium afterwards 
(SNIC, homoclinic bifurcation, supercritical flip
bifurcation, fold periodic orbit bifurcation and 
subcritical flip of periodic orbits bifurcation). 
The former bifurcations correspond to transition from 
resting to spiking frequently (i.e. the onset of
bursting), the later ones to the transition back 
to resting and their combination defines the type of 
burster in a given neuron model. 
We also remark that classification of bursting types 
in ODEs neuron models was discussed e.g. in 
\cite{BertramEtAll1995} and \cite{Rinzel1987}
as well as in the recent
article  \cite{Desroches2022}  (see also review article 
\cite{izi2000} and references therein).

It is also worth pointing out that, 
contrary to most of the 2D map based-models, 
the recovery variable $y$ in Chialvo model 
enters the voltage equation \eqref{eq:Chialvo1} 
not in additive but in multiplicative way and changes 
the shape of the first return map $x_n\mapsto x_{n+1}=f(x_n,y)$ 
of the $x$ variable in a complicated nonlinear way instead 
of just shifting the $x$-nullcline upward or downward. 
Therefore understanding how the map $f_{r,k}(x):=x^2\exp{(r-x)}+k$ 
depends not only on the additive  parameter $k$ 
but also on the parameter $r$ is helpful for 
the analysis of the overall model. 
In fact $y$ variable in the Chialvo model 
can also be intended to represent fast, not slow dynamics 
(as already mentioned in the review article \cite{ibarz2011}). 
When playing with the 2D model we have often observed that 
for many parameters configurations $y$ variable quickly 
stabilized at certain value $y_s$ while the voltage variable $x$ 
still evolved in an oscillatory way. In such a case  types 
of voltage oscillations can be well explained by understanding 
the dynamics of the 1D model with corresponding value 
of parameter $r=y_s$ (we exemplify this in section \ref{sec:implications}).

The main aim of this paper is to study the 1D Chialvo
model using the methods of one-dimensional dynamics.
According to our best knowledge, 
it has never been pointed out that the map describing 
the membrane voltage evolution in the Chialvo model is 
an $S$-unimodal map and thereby there are no results on 
this model, which use the theory of $S$-unimodal maps. 
This basic observation that we make allows to draw 
relevant conclusions for this map, as well as, 
in some cases, for the whole model itself.
Another example of the occurrence of 
unimodal maps in the context of neuron dynamics 
is the adaptation map of the bidimensional 
integrate-and-fire model \cite{wild1} 
(however, this unimodal map is given 
in a very implicit way and hardly ever 
exhibits negative Schwarzian derivative).
In addition, it may be worth pointing out that
various discrete systems similar to
the 1D Chialvo model, for example the logistic family
$Q(x)=\lambda x(1-x)$, the Ricker family 
$R(x)=\lambda x\exp(-\beta x)$ and the Hassell family
$H(x)=\lambda x/(1+x)^\beta$, have been the subject of 
both theoretical and numerical intensive research study 
since the mid-1970s (see for instance 
\cite{May1976,Cohen1995,Brauer2001,
thunberg2001,Thieme+2018}). 

Let us also note that $f(z)=z^2\exp{(r-z)}+k$ 
(for the fixed complex values of parameters $r$ and $k$) 
can be seen as an example of a complex 
entire transcendental analytic  
function of finite type, i.e., having finite number 
of critical and asymptotic values. 
Since such maps exhibit, in general, different dynamical
properties than polynomials,
their complex dynamics (Julia and Fatou sets, etc.) 
has been studied extensively since 
the early 1980s (see 
\cite{misiurewicz_1981,Schleicher2010,Alhamed2022}
and the references given there).
However, the approach presented in this paper
is focused on one-dimensional methods
and, in consequence, we do not make use 
of complex dynamics theory in our paper.

The organization of the paper is as follows. 
In Section~\ref{sec:bif} we examine the existence and 
stability of fixed points and corresponding bifurcations 
in the one-dimensional Chialvo 
model~\eqref{eq:Chialvo1DIM}.
Section~\ref{sec:dyncore} is devoted to the study
of the dynamical core of the 1D Chialvo map.
Further, in Section~\ref{sec:periodic} we provide 
the description of periodic and chaotic 
behaviour of the model making use of the fact that
\eqref{eq:Chialvo1DIM} is given by an S-unimodal map. 
To ease our analysis we introduce basic concepts 
and definitions of the theory of S-unimodal maps 
in Section~\ref{sec:prel}. Finally, concluding remarks 
are presented in Discussion section at the end of the paper. 

\section{Basic definitions}\label{sec:prel}

\subsection{Unimodality and Schwarzian derivative}

We say that an interval map $f\colon I\rightarrow I$ has
\emph{negative Schwarzian derivative} if $f$ is 
of class $C^3$ and 
\[
Sf(x):= 
\frac{f'''(x)}{f'(x)}-
\frac{3}{2}\left(\frac{f''(x)}{f'(x)}\right)^2<0
\]
for all 
$x\in I \setminus \left\{ c \mid f'(c) = 0 \right\}$.
Points at which $f'(c)=0$ are called \emph{critical points}.
The Schwarzian derivative was introduced in one-dimensional
dynamics by D. Singer\cite{Singer1978}, but its origins go back
to Hermann Schwarz and 19th century complex analysis.

A continuous interval map 
$f\colon I = \left[a,b\right]\to I$ is called \emph{unimodal} 
if there is a unique maximum $c$ in the interior of $I$ such 
that $f$ is strictly increasing on $\left[a,c \right)$ and 
strictly decreasing on $\left(c,b\right]$. For simplicity, 
the term unimodal will also require that either $a$ is 
a fixed point with $b$ as its other preimage, or that
$I=\left[f^2(c),f(c)\right]$. A unimodal map with 
negative Schwarzian derivative is called \emph{S-unimodal}.

The \emph{itinerary} of $x$ under $f$ is the sequence 
$s(x)=(s_0s_1s_2\dotsc)$ where
\[
s_j=\begin{cases}
0& \text{if $f^j(x)<c$},\\
1& \text{if $f^j(x)>c$},\\
C& \text{if $f^j(x)=c$}.
\end{cases}
\]
The \emph{kneading sequence} $K(f)$ of $f$ is the itinerary
of $f(c)$, i.e., $K(f)=s\big(f(c)\big)$.

\subsection{Bifurcations}
Let $x_0$ be a fixed point for $f$. The point $x_0$ is called
\emph{hyperbolic} if $\abs{f'(x_0)}\neq1$. The number
$\mu=f'(x_0)$ is called the \emph{multiplier} of the fixed point.
Consider a one-dimensional discrete dynamical system 
depending on a parameter $a$
\begin{equation}\label{eq:system}
x\mapsto f(x,a),\quad x\in \mathbb{R},\; a\in \mathbb{R},
\end{equation}
where $f$ is smooth with respect to both $x$ and $a$.
Let $x_0$ be a hyperbolic fixed point of the system. 
While the parameter $a$ varies, the hyperbolicity condition
can be violated. The bifurcation associated with 
the appearance of the multiplier $\mu=1$ is called 
\emph{fold} or \emph{tangent} and with
the appearance of the multiplier $\mu=-1$ \emph{flip}
or \emph{period-doubling}. 
Alternatively a fold bifurcation is called 
a \emph{saddle-node} bifurcation.
Let us recall 
a classical result concerning these bifurcations
(\cite[Th. 4.1 and 4.3]{kuznetsov}).

\begin{theorem}\label{bif_theorem}
Suppose that a one-dimensional system \eqref{eq:system}
with smooth $f$, has at $a=0$ the fixed point $x_0=0$.
Let $\mu := \frac{\partial f}{\partial x}(0,0)$.
If $\mu=1$ and the following
conditions are satisfied:
\begin{enumerate}
    \item[(A1)] $\frac{\partial^2 f}{\partial x^2}(0,0)\neq 0$,
    \item[(A2)] $\frac{\partial f}{\partial a}(0,0)\neq 0$,
\end{enumerate}
then a fold bifurcation occurs at 
the fixed point $x_0=0$ for the bifurcation value $a=0$.

On the other hand, if 
$\mu = -1$ and the
following conditions are satisfied:
\begin{enumerate}
    \item[(B1)]
    $\frac{1}{2}
    \Big(\frac{\partial^2 f}{\partial x^2}(0,0)\Big)^2
    +\frac{1}{3}\frac{\partial^3 f}{\partial x^3}(0,0)
    \neq 0$,
    \item[(B2)] 
    $\frac{\partial^2 f}{\partial x\partial a}(0,0)\neq 0$,
\end{enumerate}
then a flip bifurcation occurs at the fixed point $x_0=0$ for 
the bifurcation value $a=0$.
\end{theorem}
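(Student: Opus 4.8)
The plan is to prove both halves of the theorem by the classical normal-form reduction: one performs a finite sequence of smooth, $a$-dependent, locally invertible changes of coordinate and of parameter that successively simplify the Taylor expansion of $f$ at the origin, each step justified by the implicit function theorem and all of them valid on one common neighbourhood of $(x,a)=(0,0)$. Since the assertion is purely local, it suffices to track the first few coefficients of the $x$-Taylor expansion of $f(x,a)$, which depend smoothly on $a$.

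For the fold case write $f(x,a)=A_0(a)+A_1(a)x+A_2(a)x^2+O(x^3)$ with smooth coefficients; from $f(0,0)=0$, $\mu=1$ and (A.1)--(A.2) one has $A_0(0)=0$, $A_1(0)=1$, $A_2(0)=\tfrac12 f_{xx}(0,0)\neq0$ and $A_0'(0)=f_a(0,0)\neq0$. First I would apply a parameter-dependent shift $\xi=x+\delta(a)$ designed to annihilate the coefficient of $\xi^1$; a smooth solution $\delta$ with $\delta(0)=0$ exists precisely because $A_2(0)\neq0$, and it brings the map to $\xi\mapsto\xi+\alpha_0(a)+\alpha_2(a)\xi^2+O(\xi^3)$ with $\alpha_2(0)=A_2(0)\neq0$, $\alpha_0(0)=0$, $\alpha_0'(0)=f_a(0,0)\neq0$. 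Rescaling $\eta=|\alpha_2(a)|\,\xi$ normalises the quadratic term to $\pm\eta^2$ with sign $\mathrm{sign}\,f_{xx}(0,0)$, and putting $\beta:=|\alpha_2(a)|\,\alpha_0(a)$ gives $\beta'(0)=|\alpha_2(0)|\,f_a(0,0)\neq0$, so $a\mapsto\beta$ is a local diffeomorphism and the system is brought to the asserted form $\eta\mapsto\beta+\eta\pm\eta^2+O(\eta^3)$.

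For the flip case I would first note that $1-\mu=2\neq0$, so by the implicit function theorem the fixed point persists as a smooth branch $x^{*}(a)$ with $x^{*}(0)=0$; after translating by $x^{*}(a)$ we may assume $0$ is fixed for all small $a$, and then $f(x,a)=\mu(a)x+\beta_2(a)x^2+\beta_3(a)x^3+O(x^4)$ with $\mu(0)=-1$, while the transversality hypothesis (B.2) gives $\mu'(0)\neq0$. A near-identity quadratic change $\xi=x+\gamma(a)x^2$ removes the quadratic term (the resonance $\mu=\mu^2$ fails at $\mu=-1$, so $\gamma$ is smooth), and a short computation identifies the resulting cubic coefficient at $a=0$ with $\tfrac12\big(\tfrac12(f_{xx}(0,0))^2+\tfrac13 f_{xxx}(0,0)\big)$; equivalently, one can pass to the second iterate $f^2(x,a)=\mu(a)^2x-\big(2\beta_3(a)+2\beta_2(a)^2\big)x^3+O(x^4)$, in which the combination from (B.1) appears directly, up to sign, as the cubic coefficient. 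Hence (B.1) makes this coefficient nonzero; rescaling $\eta$ normalises it to $\pm\eta^3$ with sign $\mathrm{sign}\big(\tfrac12(f_{xx}(0,0))^2+\tfrac13 f_{xxx}(0,0)\big)$, and the reparametrisation $\beta:=-1-\mu(a)$ is a local diffeomorphism because $\beta'(0)=-\mu'(0)\neq0$, yielding $\eta\mapsto-(1+\beta)\eta\pm\eta^3+O(\eta^4)$.

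The main obstacle is not any single computation but the bookkeeping. One has to check that, once the quadratic term has been eliminated, the cubic normal-form coefficient really is the combination $\tfrac12(\partial_x^2 f)^2+\tfrac13\partial_x^3 f$ — it is the feedback of the quadratic coordinate change into the cubic term that produces the $(\partial_x^2 f)^2$ contribution — and that the successive implicit-function solutions and coordinate/parameter changes are simultaneously defined, smooth and invertible on a single neighbourhood of $(0,0)$. As all of this is classical, in the body of the paper we simply invoke \cite[Th. 4.1 and 4.3]{kuznetsov}; the sketch above indicates the route.
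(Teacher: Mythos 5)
The paper offers no proof of this statement---it is quoted as a classical result and attributed to \cite[Th.~4.1 and 4.3]{kuznetsov}---and your sketch is exactly the standard normal-form reduction given there, with the coefficient bookkeeping done correctly (in particular the identification of the post-elimination cubic coefficient with $\tfrac12\bigl(\tfrac12 f_{xx}(0,0)^2+\tfrac13 f_{xxx}(0,0)\bigr)$ and the second-iterate cross-check, which indeed gives $-2(\beta_3+\beta_2^2)$). The one place where you silently inherit the cited source's own looseness is the gloss ``(B.2) gives $\mu'(0)\neq0$'': along the shifted fixed-point branch one actually has $\mu'(0)=f_{xa}(0,0)+\tfrac12 f_{xx}(0,0)\,f_a(0,0)$, so (B.2) alone does not literally yield the transversality of the multiplier crossing; but since the paper states the hypothesis verbatim as in Kuznetsov, this is a feature of the quoted theorem rather than a defect introduced by your argument.
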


\begin{remark}\label{remarkSupercritical}
Based on the nonlinear stability of the fixed point at 
the bifurcation point we distinguish two cases for the flip
bifurcation. If the quantity 
\[
\mathcal{Q}f(0,0):=\frac{1}{2}
    \Big(\frac{\partial^2 f}{\partial x^2}(0,0)\Big)^2
    +\frac{1}{3}\frac{\partial^3 f}{\partial x^3}(0,0)
\]    
in the (B.1) condition above is positive, the fixed point loses 
its stability at the bifurcation value and new stable $2$-periodic
orbit emerges, in which case the flip bifurcation is called
\emph{supercritical}. Otherwise, i.e. when $\mathcal{Q}f(0,0)<0$, 
the fixed point turns unstable as it coalesces with an unstable
$2$-periodic orbit and the bifurcation is called \emph{subritical}.
\end{remark}

\subsection{Attractors}

A set $\Gamma$ is called \emph{forward invariant} 
if $f(\Gamma) = \Gamma$. 
The $\omega$-\emph{limit set} (or \emph{forward limit set}) 
of $x$ is defined as
$\omega(x)=
\bigcap_{n\in\mathbb{N}}
\overline{\left\{f^k(x)\mid k>n\right\}}$,
where $\overline{A}$ stands for the closure of $A$.
Let $B(\Gamma)$ denote the 
\emph{basin of attraction} of a forward invariant set 
$\Gamma$, that is, 
$B(\Gamma) =\{x\mid \omega(x)\subset\Gamma\}$.
A forwards invariant set $\Omega$ is called 
a \emph{metric attractor} if $B(\Omega)$ satisfies:
\begin{enumerate}
    \item $B(\Omega)$ has positive Lebesgue measure;
    \item if $\Omega '$ is another forward invariant set, strictly
    contained in $\Omega$, then $B(\Omega)\setminus B(\Omega ')$ 
    has positive measure.
\end{enumerate}

We will need two well-known results concerning attractors
for unimodal maps with negative Schwarzian derivative
(for example \cite[Th. 4, Cor. 5 and Th. 6]{thunberg2001}).
Let $x$ be a periodic point of period $n$.
Its periodic orbit is called \emph{neutral}
if $\abs{(f^n)^{\prime}(x)}=1$.

\begin{theorem}\label{thm:singer}
An S-unimodal map can have at most one periodic attractor, and
it will attract the critical point. Moreover, each neutral 
periodic orbit is attracting.
\end{theorem}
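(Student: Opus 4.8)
The plan is to prove this as a version of Singer's theorem, using only two standard consequences of a negative Schwarzian derivative: its behaviour under composition and the minimum principle for the derivative. First I would record the Schwarzian chain rule $S(\varphi\circ\psi)(x)=S\varphi(\psi(x))\,\psi'(x)^2+S\psi(x)$, from which $Sf<0$ (off the critical point) gives $Sf^{\,n}<0$ off the critical points of $f^{\,n}$ for every $n\ge1$. Since an attracting periodic orbit of period $n$ is precisely an attracting fixed point of $g:=f^{\,n}$, it suffices to analyse a single $C^3$ map $g$ with $Sg<0$ off its critical points and an attracting fixed point $p$ (so $|g'(p)|\le1$, the basin $B(\{p\})=\{x:g^k(x)\to p\}$ being open).

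Next comes the \emph{minimum principle}: if $Sg<0$ on an open interval on which $g'$ does not vanish, then $g'$ has no positive local minimum and no negative local maximum. This is immediate from the definition of the Schwarzian: at an interior critical point $x_0$ of $g'$ one has $g''(x_0)=0$, hence $Sg(x_0)=g'''(x_0)/g'(x_0)<0$, so $g'''(x_0)$ and $g'(x_0)$ have opposite signs, forcing $x_0$ to be a local maximum of $g'$ when $g'(x_0)>0$ and a local minimum when $g'(x_0)<0$.

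The heart of the argument is the claim that the immediate basin of $p$ — the connected component $B_0=(\alpha,\beta)$ of $B(\{p\})$ containing $p$ — has in its closure either a critical point of $g$ or a boundary point of $I$. Suppose not. Then $g'$ does not vanish on $\overline{B_0}\subset\operatorname{int}I$, so $g$ is strictly monotone on $\overline{B_0}$; from $g(B_0)\subseteq B_0$ one checks that $g(\alpha),g(\beta)\in\{\alpha,\beta\}$, and by injectivity $g$ either fixes both endpoints or interchanges them. Replacing $g$ by $g^2$ in the second case (which preserves $Sg^2<0$, the immediate basin, and the attracting fixed point $p$), I may assume $g$ is increasing on $\overline{B_0}$ with $g(\alpha)=\alpha$ and $g(\beta)=\beta$. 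Since $p$ is the only fixed point of $g$ inside $B_0$ (another one there could not be attracted to $p$), one gets $g(x)-x>0$ on $(\alpha,p)$ and $<0$ on $(p,\beta)$, hence, reading off one-sided difference quotients of $g(x)-x$ at the three points, $g'(\alpha)\ge1$, $g'(\beta)\ge1$ and $g'(p)\le1$. Then the continuous positive function $g'$ on $[\alpha,\beta]$ attains a minimum which is $\le1\le\min\{g'(\alpha),g'(\beta)\}$; if the minimum is $<1$ it is attained at an interior point, and if it equals $1$ it is attained (in particular) at the interior point $p$, unless $g'\equiv1$ — impossible, since $Sg<0$ precludes $g$ from being affine. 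This interior positive local minimum of $g'$ contradicts the minimum principle.

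It remains to harvest the consequences. By the chain rule $x$ is a critical point of $g=f^{\,n}$ exactly when $f^{\,j}(x)=c$ for some $0\le j<n$, where $c$ is the unique critical point of $f$; hence if the closure of the immediate basin meets $\operatorname{Crit}(g)$ the orbit of $c$ enters that basin, and the remaining possibility — that the closure meets $\partial I$ — is handled directly from the normalization built into the definition of unimodality (the endpoints are then a fixed point, to which the analysis applies at once, or $f(c)$ and $f^{2}(c)$, which again places the orbit of $c$ in the basin). Thus every periodic attractor attracts $c$; since the basins of distinct attractors are disjoint, there is at most one of them. Finally, the assertion that every neutral periodic orbit is attracting is exactly the fact recalled just after the definition of the Schwarzian derivative, \cite[Th.~4]{thunberg2001}, resting on the same minimum principle applied to $g=f^{\,n}$ (or $f^{\,2n}$ when the multiplier is $-1$). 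The main obstacle is the endpoint analysis in the third step — forcing $g$ to fix or swap $\alpha,\beta$ and controlling the borderline case where $g'(p)=1$ — rather than anything in the surrounding reductions.
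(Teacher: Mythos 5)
The paper does not actually prove this statement: it is quoted as a known result (Singer's theorem together with the neutral-orbit addendum), with the proof delegated to \cite[Th.~4 and Cor.~5]{thunberg2001}. Your argument is therefore not ``a different route from the paper's proof'' so much as a reconstruction of the classical proof that the paper chose to cite, and as such it is essentially correct: the reduction of a period-$n$ attractor to an attracting fixed point of $g=f^n$ via the Schwarzian chain rule, the minimum principle for $g'$, the analysis of the immediate basin $(\alpha,\beta)$ (endpoints fixed or swapped, passage to $g^2$, the three derivative estimates $g'(\alpha)\ge 1$, $g'(\beta)\ge 1$, $g'(p)\le 1$ forcing an interior positive local minimum of $g'$), and the identification of critical points of $f^n$ with preimages of $c$ are all exactly the standard Singer argument. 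The conclusion ``at most one periodic attractor'' from disjointness of basins is also correct.

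Two points deserve more care than your sketch gives them. First, your key claim places a critical point of $g$ only in the \emph{closure} of the immediate basin, and the harvesting step quietly upgrades this to ``the orbit of $c$ enters that basin'': if the critical point of $g$ sits at an endpoint $\alpha$ or $\beta$ rather than in the open interval, one must argue separately that $c$ is nonetheless attracted (this is where the normalization $I=[f^2(c),f(c)]$, or the fixed endpoint, is really used, and where one should also note that if $g(\alpha)\in(\alpha,\beta)$ then $\alpha$ itself lies in the open basin). Second, the final sentence disposes of the neutral case by citing back the very fact being proved; for a self-contained argument you would need to rerun the basin analysis for $|g'(p)|=1$ (one-sided attraction via the minimum principle), which is what \cite{thunberg2001} does. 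Since the paper itself treats both assertions as citable background, neither point is a defect relative to the paper, but they are the places where your outline is thinner than a complete proof.
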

By a periodic attractor in the above statement we obviously mean 
an attracting periodic orbit, i.e. a periodic orbit which is 
a forward-limit set for each point in some its neighbourhood.
 
\begin{theorem}\label{thm:attractorgen}
Let $f\colon I \rightarrow I$ be an S-unimodal map with nonflat
critical point. Then $f$ has a unique metric attractor $\Omega$, 
such that $w(x)=\Omega$ for almost all $x\in I$. 
The attractor $\Omega$ is of one of the following types:
an attracting periodic orbit,
a Cantor set of measure zero,
a finite union of intervals with a dense orbit.
In the first two cases, $\Omega = \omega(c)$.
\end{theorem}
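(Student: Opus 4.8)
The plan is to assemble the statement from the structure theory of one-dimensional dynamics, using the negative Schwarzian derivative (via the minimum principle for $|(f^n)'|$ and the resulting Koebe-type distortion control), the absence of wandering intervals for $C^3$ unimodal maps with non-flat critical point (Guckenheimer; de~Melo--van~Strien), and Singer's theorem (Theorem~\ref{thm:singer}). The hypotheses on $f$ already normalise $I$ to the dynamical core $[f^2(c),f(c)]$ (or the endpoint-fixed situation permitted in the definition of unimodality), so the asymptotics of Lebesgue-almost every orbit are governed by the forward orbit of the critical point $c$, and the only candidates for the metric attractor are the objects built from it.

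Next I would run the renormalisation reduction. If $f$ is infinitely renormalisable, the nested cycles of restrictive intervals around $c$ have lengths tending to zero (by the real bounds) and shrink to a minimal Cantor set on which $f$ acts as an adding machine; this set has Lebesgue measure zero and equals $\omega(c)$, and the no-wandering-interval property forces almost every orbit into each of these cycles, so its $\omega$-limit set is exactly the Cantor set --- this is case~(2). If instead $f$ is only finitely renormalisable, then, replacing $f$ by its deepest renormalisation $f^{k}$ on the corresponding restrictive interval $J$, the metric attractor of $f$ is the $f$-cycle $J\cup f(J)\cup\dots\cup f^{k-1}(J)$ of the metric attractor of $f^{k}|_{J}$; hence it suffices to treat a non-renormalisable S-unimodal map.

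For a non-renormalisable S-unimodal map there are two sub-cases. If the orbit of $c$ converges to a periodic orbit, then by Theorem~\ref{thm:singer} this is the unique periodic attractor, it attracts $c$, and --- the complement of its basin carrying an expanding structure furnished by the distortion estimates, hence being Lebesgue-null --- it is the metric attractor, equal to $\omega(c)$: case~(1). Otherwise the Martens--de~Melo--van~Strien / Blokh--Lyubich ergodic analysis of the core yields the dichotomy: either Lebesgue-almost every orbit is dense in $J$, so $f|_{J}$ is topologically transitive and $\Omega=J$ is a finite union of intervals with a dense orbit (case~(3), where only $\omega(c)\subseteq\Omega$ need hold, as Misiurewicz examples such as $x\mapsto 4x(1-x)$ show), or there is a ``wild'' Cantor attractor $\Omega=\omega(c)$ of measure zero absorbing a full-measure set of orbits (case~(2)). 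In every case, uniqueness of $\Omega$ and the minimality of its basin follow from the same measure estimates: any forward-invariant set whose basin has positive measure must contain $\Omega$.

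I expect the real obstacle to be this last, measure-theoretic step rather than the topological trichotomy. Bounding the measure of the set of points that keep returning to small neighbourhoods of $c$ with uncontrolled distortion, proving that this ``bad set'' is null, and carrying out the first-return/induced-map analysis that underpins the Martens--de~Melo--van~Strien decomposition is the technical heart, and precisely the part that resists soft arguments. Since Theorem~\ref{thm:attractorgen} is quoted from the literature, in the write-up I would cite \cite{thunberg2001} together with the primary sources therein for these steps and keep the self-contained portion to the reductions above.
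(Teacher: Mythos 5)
The paper does not actually prove Theorem~\ref{thm:attractorgen}: it is imported verbatim from the literature (\cite[Th.~4, Cor.~5 and Th.~6]{thunberg2001}), so there is no in-paper argument to measure your proposal against. That said, your outline is a faithful reconstruction of how the result is established in Thunberg's survey and its primary sources: the renormalisation reduction (infinitely renormalisable maps yielding the measure-zero solenoidal Cantor attractor, finitely renormalisable ones reducing to the non-renormalisable core), Singer's theorem (Theorem~\ref{thm:singer}) together with Ma\~{n}\'{e}-type hyperbolicity of the complement of the basin in the periodic case, the non-existence of wandering intervals for a non-flat critical point, and the Blokh--Lyubich / Martens--de~Melo--van~Strien dichotomy between a transitive interval attractor and a wild Cantor attractor in the non-renormalisable case. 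You are also right on the two points that are easiest to get wrong: that in case (3) one only has $\omega(c)\subseteq\Omega$ in general (the Chebyshev/Misiurewicz examples), and that the genuine technical content is the measure-theoretic control of returns to the critical point underlying the induced-map analysis, which indeed resists soft arguments and which you appropriately defer to the cited sources. Since the paper treats the theorem as a black box, your proposal is strictly more informative than what it offers; as a self-contained proof it remains a roadmap with the distortion and first-return estimates outsourced, which is a reasonable level of detail for a quoted classical result.
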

The attractor described in \emph{(3)} will be called an
\emph{interval attractor} for brevity.
 
\section{Bifurcations in the Chialvo model}\label{sec:bif}

We start with the observation that will be crucial for our further analysis.
\begin{proposition}
The map $f(x)=x^2\exp{(r-x)}+k$ 
has negative Schwarzian derivative
for all $r,k\in \mathbb{R}$.
\end{proposition}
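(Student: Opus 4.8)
The plan is to exploit the affine invariance of the Schwarzian derivative. Write $f(x)=x^2\exp(r-x)+k = e^r\,g(x)+k$ with $g(x):=x^2 e^{-x}$. Since $g$ is $C^\infty$, so is $f$, and for any $\alpha\neq 0$, $\beta\in\mathbb{R}$ one has $S(\alpha g+\beta)=Sg$: the factor $\alpha$ appears in $g'$, $g''$, $g'''$ alike and cancels in the quotients $g'''/g'$ and $(g''/g')^2$, while $\beta$ is killed by differentiation. Hence it suffices to prove $Sg(x)<0$ for every $x$ with $g'(x)\neq 0$; the parameters $r$ and $k$ drop out of the computation entirely.

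Next I would compute the derivatives of $g$. One gets $g'(x)=e^{-x}(2x-x^2)$, $g''(x)=e^{-x}(x^2-4x+2)$ and $g'''(x)=e^{-x}(-x^2+6x-6)$, so $g'(x)=0$ precisely at $x\in\{0,2\}$. The exponential cancels in the ratios, leaving $g'''/g'=(-x^2+6x-6)/(2x-x^2)$ and $g''/g'=(x^2-4x+2)/(2x-x^2)$, two rational functions with common denominator $2x-x^2$.

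Then I would clear denominators: multiplying $Sg(x)$ by the strictly positive quantity $2(2x-x^2)^2$ (legitimate for $x\notin\{0,2\}$) converts the inequality $Sg(x)<0$ into a polynomial one. Expanding $2(2x-x^2)(-x^2+6x-6)-3(x^2-4x+2)^2$ yields $-\bigl(x^4-8x^3+24x^2-24x+12\bigr)$, so the statement reduces to showing the quartic $Q(x):=x^4-8x^3+24x^2-24x+12$ is strictly positive on $\mathbb{R}$.

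Finally, the key observation is the sum-of-squares identity $Q(x)=(x^2-4x+3)^2+2x^2+3=(x-1)^2(x-3)^2+2x^2+3$, whose right-hand side is at least $3$ for every real $x$. Therefore $Q>0$, so $Sg<0$ off the critical points $\{0,2\}$, and hence $Sf<0$ on all of $\mathbb{R}\setminus\{0,2\}$, which is exactly the asserted negativity of the Schwarzian derivative. The only step that is not pure mechanics is spotting this factorization of $Q$ (equivalently, one could verify positivity by noting $Q'(x)=4\bigl((x-2)^3+2\bigr)$ has a unique real root at $x=2-2^{1/3}$ and evaluating $Q$ there, but the sum-of-squares form is cleaner and self-checking); all the differentiation and expansion that precedes it is routine.
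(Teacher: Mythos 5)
Your proposal is correct and follows essentially the same route as the paper: both compute the Schwarzian explicitly, reduce the claim to the positivity of the quartic $x^4-8x^3+24x^2-24x+12$, and conclude by exhibiting it as a sum of squares (the paper uses $(x^2-4x+2)^2+4(x-1)^2+4$, you use $(x-1)^2(x-3)^2+2x^2+3$; both identities check out). The preliminary observation about affine invariance of the Schwarzian is a nice way to make explicit why $r$ and $k$ drop out, but it does not change the substance of the argument.
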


\begin{proof}  
By the definition of the Schwarzian derivative,
\[
Sf(x)= 
-\frac12\frac{x^4-8x^3+24x^2-24x+12}{(2x-x^2)^2}=
-\frac12\frac{(x^2-4x+2)^2+4(x-1)^2+4}{(2x-x^2)^2}.
\]
This is obviously negative for all $x$
(even for $x$ equal to $0$ or $2$ we have $Sf(x)=-\infty$),
which completes the proof. 
\end{proof}

Thus the function whose iterations define the reduced Chialvo model
\eqref{eq:Chialvo1DIM} is an S-unimodal map, when restricted to 
an appropriate invariant subinterval of $[0,\infty)$, as will be
discussed in details in the next section. However, let us firstly
examine bifurcations of fixed points of the map $f_{r,k}$, 
when $k\geq 0$ is fixed and $r$ acts as a bifurcation parameter. 

We will discuss flip and fold bifurcations of fixed points for 
the reduced Chialvo model. Loosing stability of the fixed point 
and emergence of the stable periodic orbit in case of the flip
bifurcation is relevant for the onset of bursting in 
the full 2D model.  Let us also mention that other codimension-one
bifurcations of fixed points, as well as bifurcations of periodic
orbits responsible for the onset or termination of bursting in
map-based models, are summarized in \cite{iziHoppen2004} 
(with examples of models and corresponding voltage-plots), 
which provides classification of one-dimensional and 
two-dimensional slow-fast bursting mappings with the implications 
for neuron's computational abilities.    

\subsection{Flip bifurcation}

\begin{theorem}[flip bifurcation]\label{thm:flip}
Let $k\geq 0$ be fixed. For $r_0=x_0-\ln\big(x_0(x_0-2)\big)$ 
(or equivalently $r_0=x_0+\ln\big((x_0-k)/{x_0^2}\big)$) 
the 1D Chialvo map undergoes a supercritical flip bifurcation 
at the fixed point $x_0=(k+3+\sqrt{k^2-2k+9})/{2}$.  
\end{theorem}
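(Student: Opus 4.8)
The plan is to pin down the bifurcating fixed point and the parameter value explicitly, and then to verify the hypotheses of Theorem~\ref{bif_theorem} (the flip case) together with the criterion of Remark~\ref{remarkSupercritical}.

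First I would record the fixed-point relation. Writing $x = x^2 e^{r-x}+k$ as $e^{r-x} = (x-k)/x^2$ (which forces $x>k$) gives, along the fixed-point curve, $r = x + \ln\big((x-k)/x^2\big)$. Since $f_r'(x) = e^{r-x}x(2-x)$, the multiplier along that curve is $\mu(x) = (x-k)(2-x)/x$, so the flip condition $\mu(x_0) = -1$ becomes the quadratic $x_0^2 - (k+3)x_0 + 2k = 0$ with roots $\big(k+3 \pm \sqrt{k^2-2k+9}\big)/2$. A short check shows the smaller root always satisfies $x_0<k$ (the inequality $3-k < \sqrt{k^2-2k+9}$ reduces to $k>0$), so it is not a fixed point of $f_{r,k}$ on $[0,\infty)$; hence the relevant one is $x_0 = \big(k+3+\sqrt{k^2-2k+9}\big)/2$, which is increasing in $k$ and equals $3$ at $k=0$, so $x_0\ge 3>2$ and $x_0$ lies on the decreasing branch of $f$ (the branch relevant for a flip). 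Substituting the rewritten flip condition $x_0-k = x_0/(x_0-2)$ into the fixed-point relation produces both forms of $r_0$ in the statement.

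Next I would translate the problem to the origin: set $F(\eta,a) := f_{r_0+a}(x_0+\eta)-x_0$, so $F(0,0)=0$ and $\partial_\eta F(0,0) = f_{r_0}'(x_0) = \mu(x_0) = -1$. For the transversality condition (B.2), the identity $\partial_r f_r = f_r - k$ gives $\partial_r\partial_x f_r = \partial_x f_r$, hence $\partial_a\partial_\eta F(0,0) = f_{r_0}'(x_0) = -1 \ne 0$. For the nondegeneracy condition (B.1) I would avoid evaluating $f_r''=e^{r-x}(x^2-4x+2)$ and $f_r'''=e^{r-x}(-x^2+6x-6)$ at $x_0$ directly and instead use that, at a point where $f'=-1$, the Schwarzian collapses to $Sf = -f''' - \tfrac32(f'')^2 = -3\,\mathcal{Q}f$. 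Thus $\mathcal{Q}F(0,0) = \mathcal{Q}f_{r_0}(x_0) = -\tfrac13 Sf(x_0)$, which is strictly positive by the Proposition (indeed $Sf<0$ everywhere, and $x_0\ge 3$). So (B.1) holds with $\mathcal{Q}F(0,0)>0$, and by Remark~\ref{remarkSupercritical} the bifurcation is supercritical; Theorem~\ref{bif_theorem} then yields the claimed supercritical flip of $x_0$ at $r=r_0$.

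I do not expect a serious obstacle here: the computations are routine. The two places that need care are (i) discarding the spurious root of $\mu(x_0)=-1$ and confirming $x_0>2$, so that the bifurcation genuinely occurs at a fixed point on the unimodal (decreasing) branch; and (ii) exploiting the relation $\mathcal{Q}f = -\tfrac13 Sf$, valid at any period-doubling point, which simultaneously disposes of condition (B.1) and identifies the bifurcation as supercritical as an immediate corollary of the negative Schwarzian derivative established in the Proposition — without this shortcut, (B.1) would require showing positivity of the quartic $x_0^4-8x_0^3+24x_0^2-24x_0+12$, which is precisely the numerator appearing in that Proposition.
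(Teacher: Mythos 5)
Your proposal is correct and follows essentially the same route as the paper: combining the fixed-point and multiplier conditions into the quadratic $x_0^2-(k+3)x_0+2k=0$, discarding the smaller root via $x_0>k$, verifying (B.2) from $\partial_r\partial_x f=\partial_x f=-1$, and settling (B.1) and supercriticality at once through the identity $\mathcal{Q}f=-\tfrac13 Sf$ at a point with multiplier $-1$ together with the negative Schwarzian derivative. The only cosmetic difference is that you additionally spell out why $x_0\ge 3>2$, which the paper subsumes in the condition $x_0>\max\{2,k\}$.
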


\begin{figure}[!htb]\vspace{-7mm}
\centering{
  \includegraphics[scale=0.325,trim= 36mm 8mm 40mm 10mm]{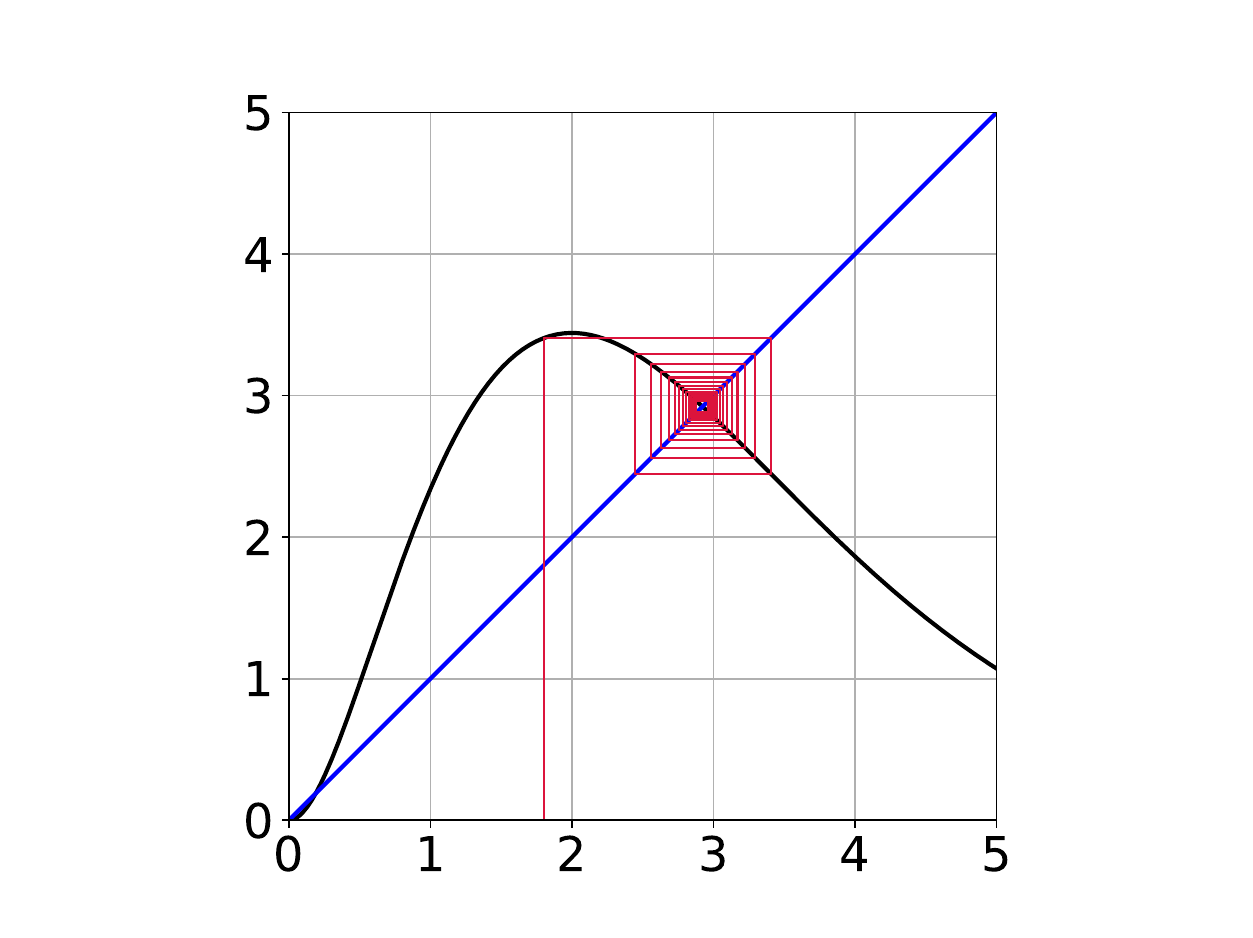}
  \includegraphics[scale=0.325,trim= 5mm 8mm 40mm 10mm]{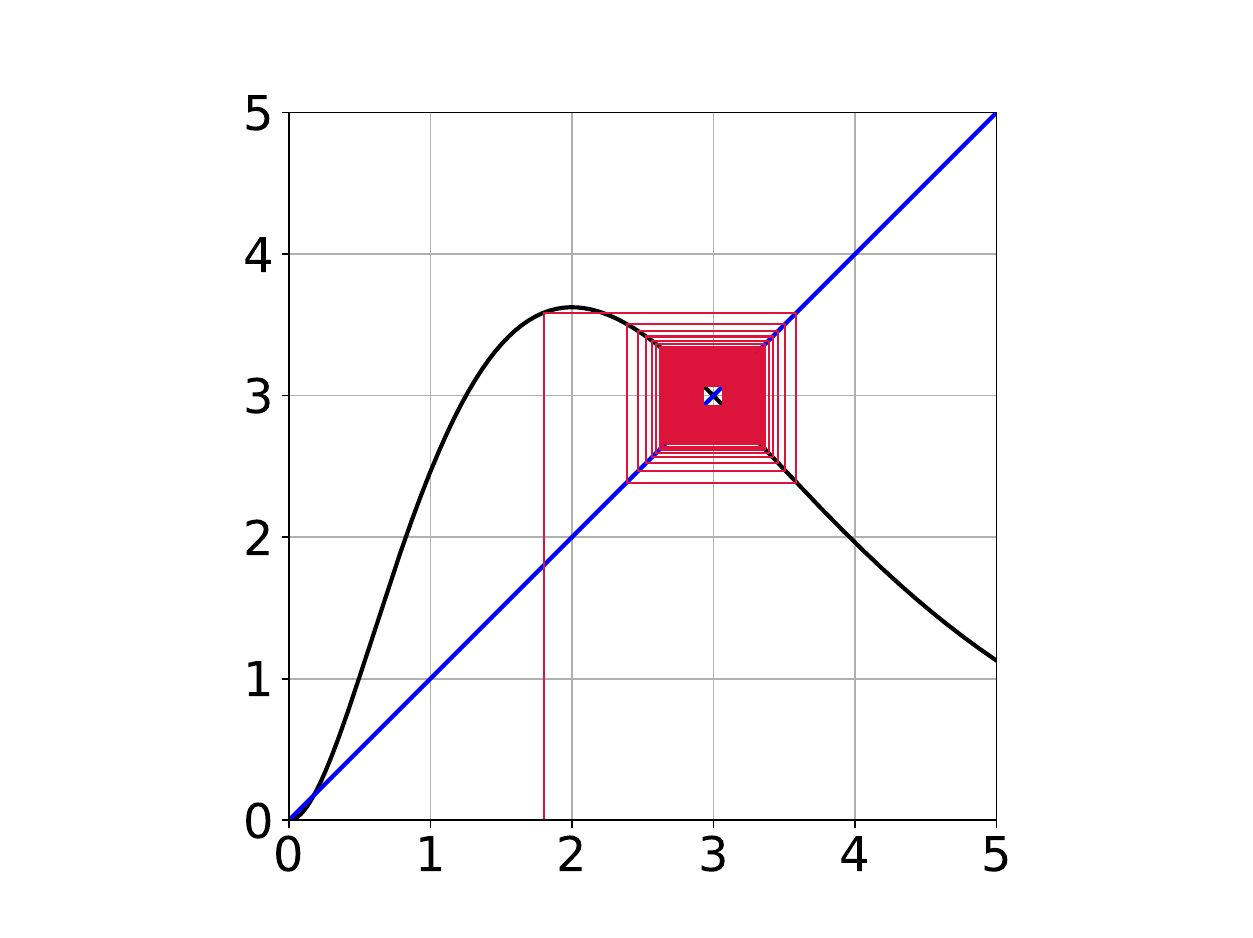}
  \includegraphics[scale=0.325,trim= 5mm 8mm 40mm 10mm]{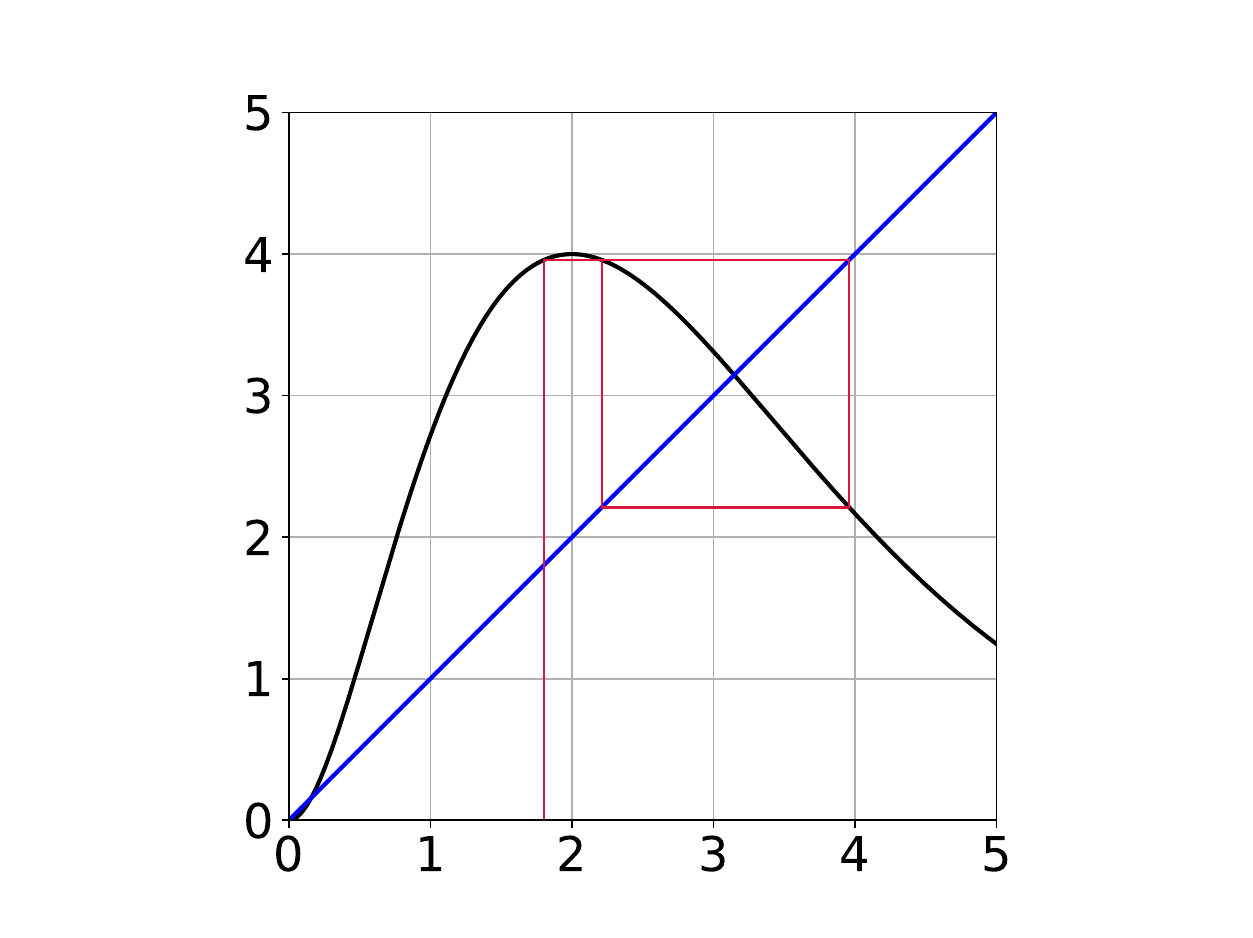}}
\caption{Flip bifurcation in the 1D Chialvo model  with $r$ as 
a bifurcation parameter ($k=0$).  Left: $r = 1.85$. Center: $r = 3 - \ln{3}$ (bifurcation value). Right: $r = 2$.}
\label{fig:flip1}
\end{figure}\vspace{-7mm}

\begin{proof}
Any fixed point $x_0$ of the map $f_{r,k}(x)=x^2\exp(r-x)+k$ 
must satisfy the fixed point condition $x_0=x_0^2\exp(r-x_0)+k$.
Simultaneously, the multiplier condition for 
the flip bifurcation gives 
\[
f^{\prime}_{r_0,k}(x_0)=x_0\exp(r_0-x_0)(2-x_0)=-1. 
\]
It follows that (when $k\geq 0$) the candidate $x_0$ for 
the flip bifurcation satisfies $x_0>\max\{2,k\}$ and
$(x_0-k)(x_0-2)/x_0=1$. The last equation, regardless of 
the value of $k$, has always two roots
$x_{0,1}=(k+3-\sqrt{k^2-2k+9})/{2}$ and
$x_{0,2}=(k+3+\sqrt{k^2-2k+9})/{2}$. 
However, $x_{0,1}$ violates the condition that $x_0>k$. 
Thus $x_0=x_{0,2}$. The formula for the parameter value $r_0$ 
follows from the multiplier condition (or equivalently 
the fixed point condition) and the condition (B.2) of 
Theorem \ref{bif_theorem} can be easily verified as direct
calculations give 
$\frac{\partial^2 f}{\partial r \partial x}(x_0,r_0)=-1\neq 0$.
Similarly, the negative value of the Schwarzian derivative 
immediately implies that the condition (B.1) is also satisfied, 
as $\mathcal{Q}f(x_0,r_0)=-Sf/{3}$, and the bifurcation 
is supercritical.  
\end{proof}

The flip bifurcation, for $k=0$, is numerically illustrated 
in Figure \ref{fig:flip1} with the help of the cobweb diagram. 

\subsection{Fold bifurcation}

\begin{theorem}[fold bifurcation]\label{thm:fold}
Let $k\in [0,3-2\sqrt{2})$ be fixed. Then the 1D Chialvo model
undergoes a fold bifurcation with $r$ as a bifurcation parameter. 
In particular, for $k=0$ and $r_0=1$ the map 
in \eqref{eq:Chialvo1DIM} displays a fold bifurcation at 
the fixed point $x_0=1$. In turn, for fixed $0<k\leq 3-2\sqrt{2}$,
there are two values of the bifurcation parameter,
$r_{0,1}=x_{0,1}-\ln((2-x_{0,1})x_{0,1})$ and
$r_{0,2}=x_{0,2}-\ln((2-x_{0,2})x_{0,2})$, 
with corresponding values of bifurcating fixed points given,
respectively, as $x_{0,1}=(k+1-\sqrt{k^2-6k+1})/{2}$ and
$x_{0,2}=(k+1+\sqrt{k^2-6k+1})/{2}$.  
\end{theorem}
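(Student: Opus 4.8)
The plan is to mirror the structure of the proof of Theorem~\ref{thm:flip}, but now imposing the fold multiplier condition $f'_{r_0,k}(x_0)=1$ instead of $-1$. First I would write down the two defining equations: the fixed-point condition $x_0 = x_0^2\exp(r_0-x_0)+k$ and the multiplier condition $f'_{r_0,k}(x_0)=x_0\exp(r_0-x_0)(2-x_0)=1$. Dividing the multiplier condition by the fixed-point relation $x_0-k=x_0^2\exp(r_0-x_0)$ (valid since one checks $x_0\neq 0$ here) eliminates the exponential and yields $(x_0-k)(2-x_0)/x_0 = 1$, i.e. the quadratic $x_0^2 - (k+1)x_0 + 2k = 0$. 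Its roots are exactly $x_{0,1},x_{0,2} = (k+1\mp\sqrt{k^2-6k+1})/2$, and the discriminant $k^2-6k+1$ is nonnegative precisely when $k\le 3-2\sqrt 2$ (the other root $3+2\sqrt2$ being irrelevant since we want $k$ small); this is where the hypothesis $k\in[0,3-2\sqrt2)$ comes from, with $k=3-2\sqrt2$ giving the degenerate double root. For $k=0$ the quadratic is $x_0^2-x_0=0$, whose relevant root is $x_0=1$ (the root $x_0=0$ being the trivial always-fixed point, not a fold point), and then $r_0 = x_0 - \ln(x_0(2-x_0)) = 1$, recovering the stated special case.

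Next I would solve the multiplier (or equivalently fixed-point) condition for $r_0$: taking logarithms in $x_0\exp(r_0-x_0)(2-x_0)=1$ gives $r_0 = x_0 - \ln\big((2-x_0)x_0\big)$, which makes sense since for these roots one has $0<x_0<2$ (indeed $x_{0,1},x_{0,2}\in(0,2)$ when $0\le k<3-2\sqrt2$, so the argument of the logarithm is positive), yielding the two stated bifurcation values $r_{0,1},r_{0,2}$. Then it remains to verify the genuine fold (transversality and nondegeneracy) conditions (A.1) and (A.2) of Theorem~\ref{bif_theorem} at each point $(x_0,r_0)$. For (A.2), a direct computation gives $\frac{\partial f}{\partial r}(x_0,r_0) = x_0^2\exp(r_0-x_0) = x_0-k > 0$ (since $x_0>k$ for both roots when $0\le k<3-2\sqrt2$, which one checks from the explicit formulas), so (A.2) holds. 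For (A.1), I would compute $\frac{\partial^2 f}{\partial x^2}(x_0,r_0) = \exp(r_0-x_0)\big(x_0^2 - 4x_0 + 2\big)$ and argue it is nonzero: it vanishes only if $x_0 = 2\pm\sqrt2$, and one checks that neither of these coincides with $x_{0,1}$ or $x_{0,2}$ for $k$ in the given range — equivalently, substituting $x_0=2-\sqrt2$ into $x_0^2-(k+1)x_0+2k=0$ would force a specific value of $k$ outside $[0,3-2\sqrt2)$. Hence (A.1) holds and the normal form $\eta\mapsto\beta+\eta\pm\eta^2+O(\eta^3)$ applies.

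The main obstacle I anticipate is bookkeeping rather than conceptual: one must be careful about which algebraic root of the various quadratics is the relevant bifurcating fixed point (as opposed to the spurious root, e.g. $x_0=0$ when $k=0$, or the root violating $x_0>k$), and one must confirm the sign and nonvanishing facts ($0<x_0<2$, $x_0>k$, $x_0^2-4x_0+2\neq 0$) uniformly over the parameter interval $[0,3-2\sqrt2)$ from the explicit radical formulas. A minor subtlety worth a remark is the boundary case $k=3-2\sqrt2$: there the two fixed points and the two parameter values merge ($x_{0,1}=x_{0,2}=(k+1)/2$, $r_{0,1}=r_{0,2}$), so the two fold branches collide — this is why the theorem is stated with a half-open interval and the value $k=3-2\sqrt2$ is included only in the second ("$0<k\le 3-2\sqrt2$") clause as a limiting/degenerate endpoint. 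Everything else is routine differentiation of $f_{r,k}(x)=x^2\exp(r-x)+k$ and substitution.
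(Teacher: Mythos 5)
Your proposal is correct and follows essentially the same route as the paper's proof: combine the fixed-point and multiplier conditions to obtain the quadratic $x_0^2-(k+1)x_0+2k=0$, read off the roots and the discriminant condition $k\le 3-2\sqrt{2}$, solve the multiplier condition for $r_0$, and verify (A.1) and (A.2) of Theorem~\ref{bif_theorem}, with (A.1) failing precisely at the degenerate value $k=3-2\sqrt{2}$ where $x_{0,1}=x_{0,2}=2-\sqrt{2}$. Your explicit check that $x_0=2\pm\sqrt{2}$ forces $k=3-2\sqrt{2}$ is exactly the computation the paper summarizes as ``direct calculations,'' so there is nothing to add.
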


\begin{figure}[!htb]\vspace{-4mm}
\centering{
  \includegraphics[scale=0.32,trim= 30mm 8mm 36mm 10mm]{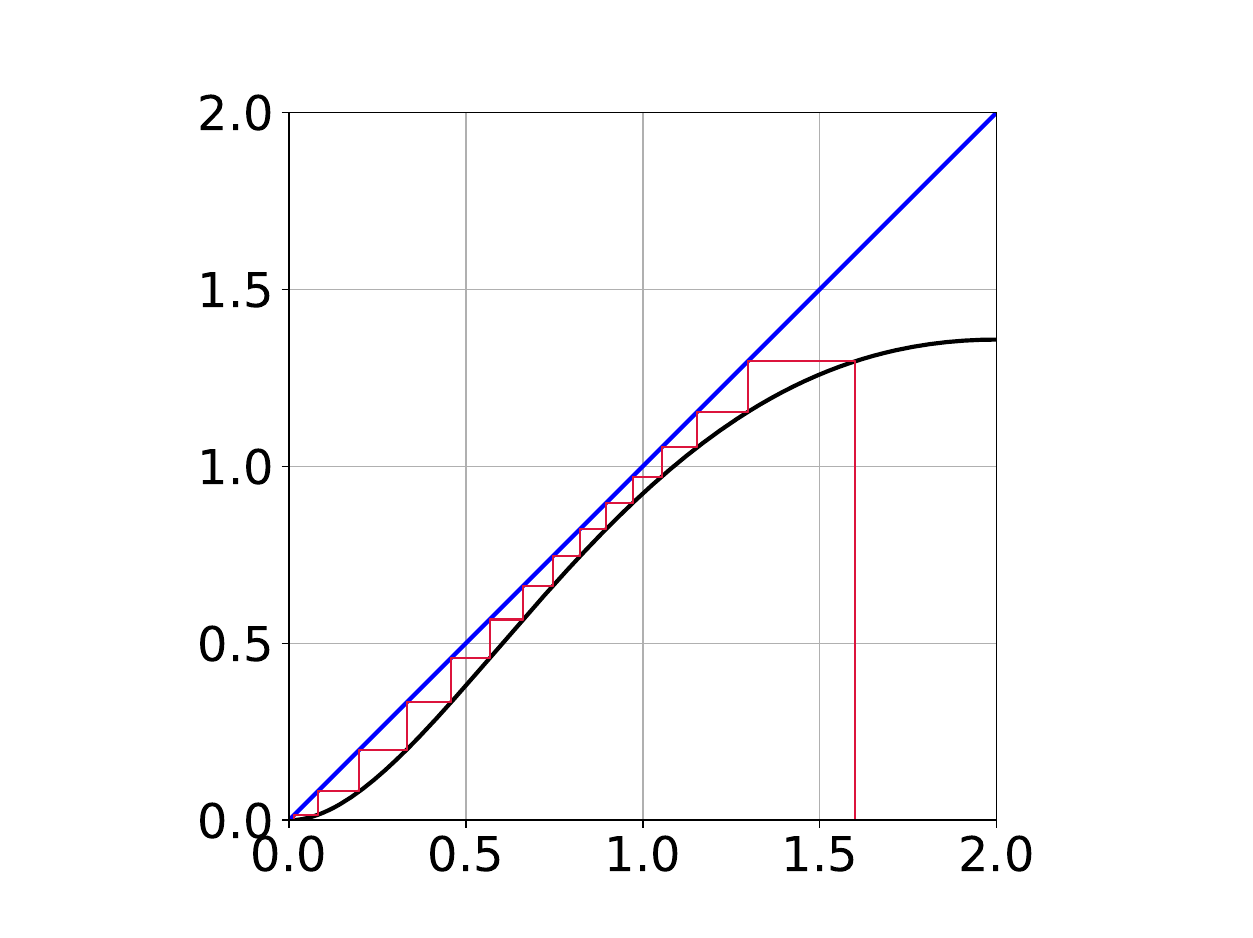}
  \includegraphics[scale=0.32,trim= 5mm 8mm 36mm 10mm]{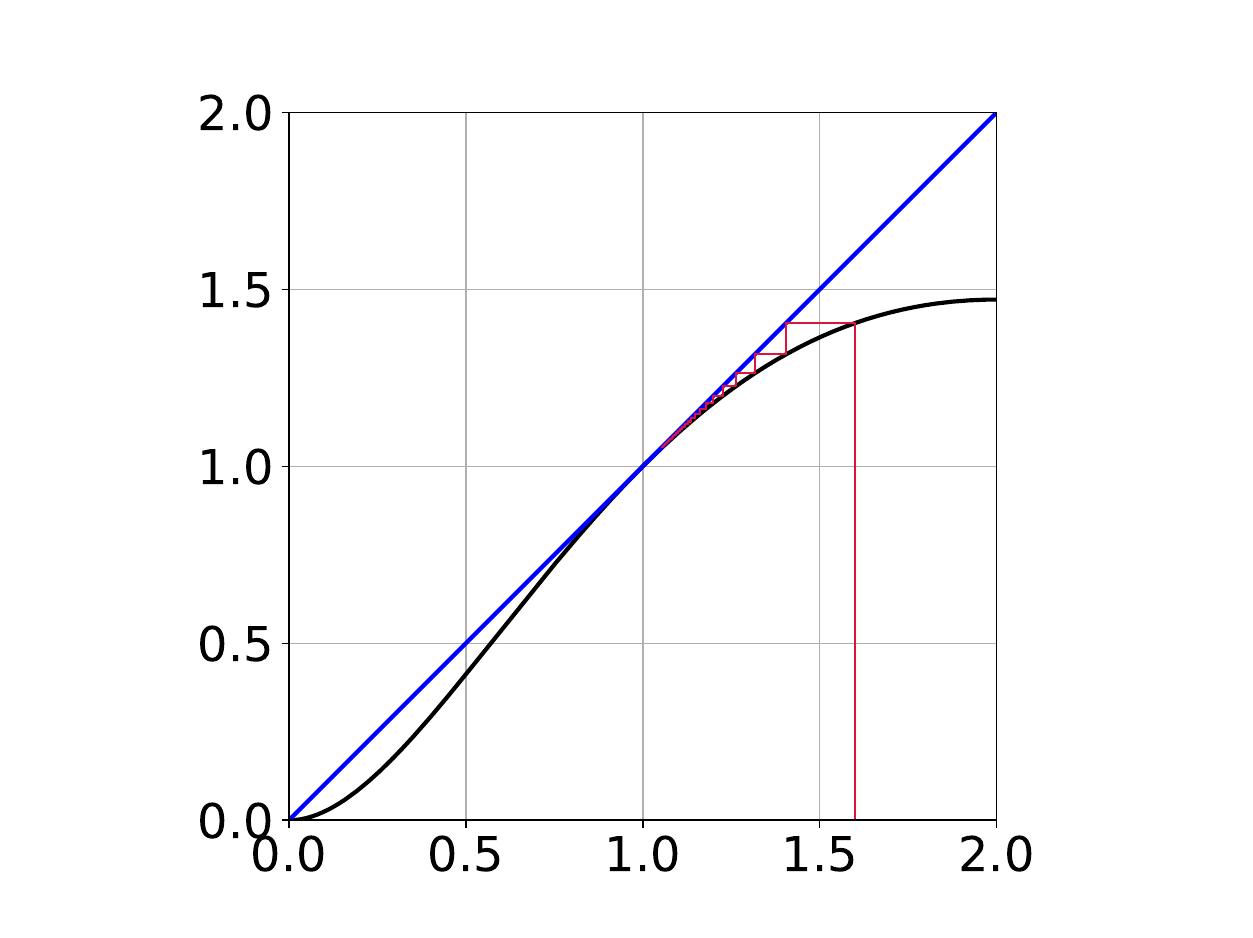}
  \includegraphics[scale=0.32,trim= 5mm 8mm 35mm 10mm]{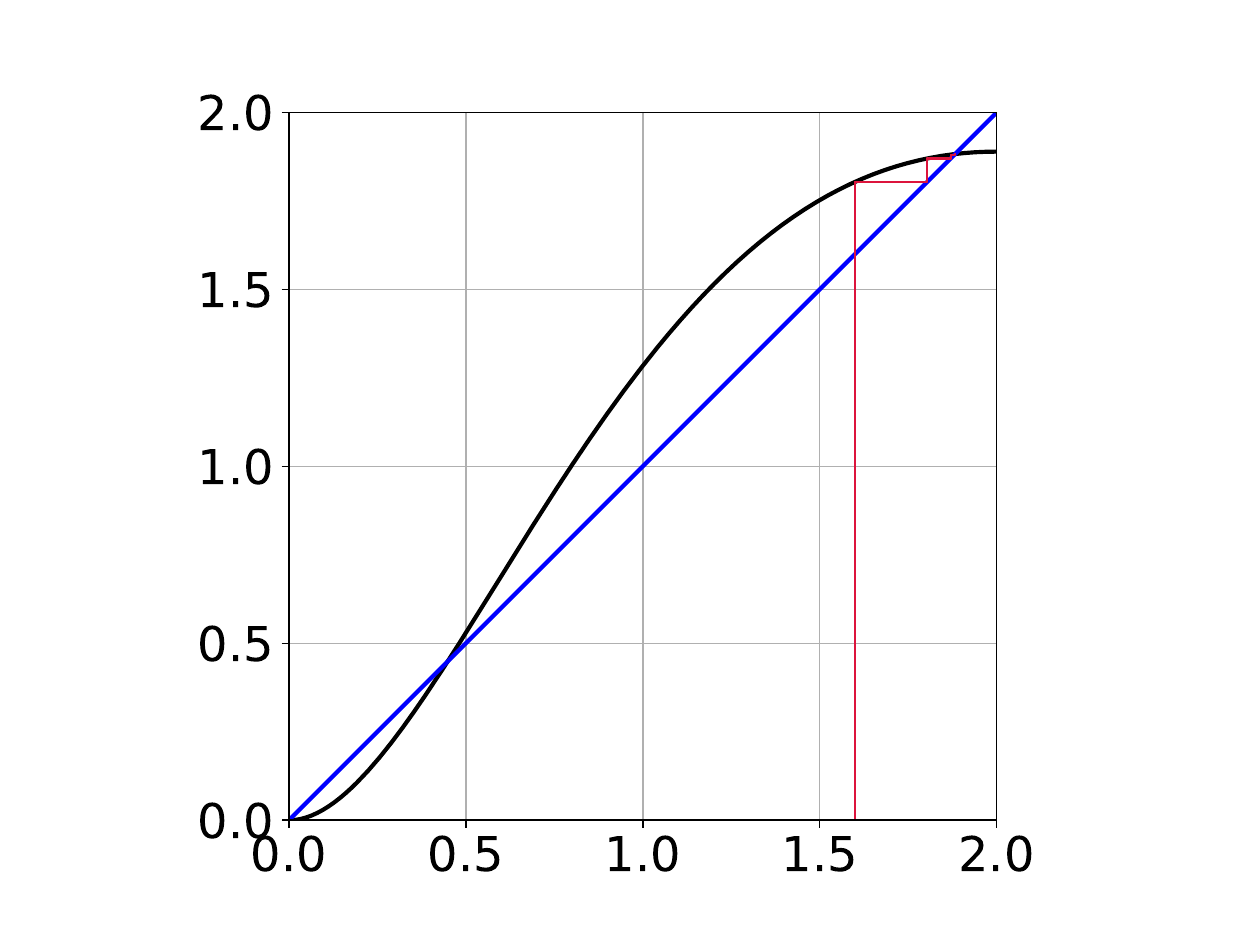}}
\caption{Fold bifurcation in the 1D Chialvo model with $r$ as 
a bifurcation parameter ($k=0$). Left: $r = 0.92$. Center: 
$r = 1$ (bifurcation value). Right: $r = 1.25$.}
\label{fig:saddle-node}
\end{figure}\vspace{-5mm}

\begin{proof}
Assume $k=0$. From the fixed point condition $x^2\exp(r-x)=x$ and 
the multiplier condition $f^{\prime}(x)=x\exp(r-x)(2-x)=1$ 
we easily obtain that the only candidate for the bifurcation point 
and parameter value is a pair $(x_0,r_0)=(1,1)$. 
It is also immediate to verify that for these values 
the conditions (A.1) and (A.2) of Theorem \ref{bif_theorem}
are satisfied.

For $k>0$ the situation is a bit more complicated. Firstly, 
the fixed point condition $x^2\exp(r-x)+k=x$ and the multiplier
condition $\mu=1$ yield that the fold bifurcation is only possible 
for $k<2$ and the candidate $x_0$ for the fixed point undergoing
bifurcation must satisfy $k<x_0<2$. Combining the multiplier 
condition with the fixed point condition gives
$(2-x_0)(x_0-k)/{x_0}=1$, or equivalently $x_0^2-(k+1)x_0+2k=0$. 
The discriminant of this equation $k^2-6k+1$ must be non-negative
which leads to further restriction on admissible values of $k$, 
namely $k\leq 3-2\sqrt{2}$. Now, the above quadratic equation has 
the roots   $x_{0,1}$ and $x_{0,2}$ as stated in the theorem, 
which both belong to the interval $(k,2)$. When $x_0=x_{0,1}$ 
or $x_0=x_{0,2}$ the corresponding values of the bifurcation 
parameter ($r_{0,1}$ and $r_{0,2}$) can be easily derived from 
the multiplier condition $x_0\exp(r-x_0)(2-x_0)=1$. 
We need to verify the conditions (A.1) and (A.2) at 
the points $(x_{0,1},r_{0,1})$ and $(x_{0,2},r_{0,2})$. 
The latter one can only be violated when $x_{0,1}=0$ or $x_{0,1}=k$
(correspondingly,  $x_{0,2}=0$ or $x_{0,2}=k$). But these two cases
have been excluded. As for (A.1), 
$\frac{\partial^2 f}{\partial x^2}(x,r)=\exp(r-x)((x-2)^2-2)$ 
and thus direct calculations show that 
$\frac{\partial^2 f}{\partial x^2}(x_0,r_0)=0$  
(for $(x_0,r_0)\in \{(x_{0,1},r_{0,1}), (x_{0,2},r_{0,2})\}$) 
only for $k=3-2\sqrt{2}$. Hence the restriction $k<3-2\sqrt{2}$ 
in the theorem on the fold bifurcation 
(we notify that $k=3-2\sqrt{2}$ is the degenerate case 
when $x_{0,1}=x_{0,2}=2-\sqrt{2}$).
\end{proof} 

The fold bifurcation with respect to $r$ ($k=0$) is presented 
in Figure \ref{fig:saddle-node}.

\subsection{Bifurcations with respect to $k$}

Let us consider now $k$ as a bifurcation parameter in the reduced
Chialvo model. Since 
$\frac{\partial^2 f }{\partial x \partial k}(x,k)= 0$, 
the fundamental condition for the occurrence of 
a flip bifurcation is not satisfied here. 
However, the analysis of a fold bifurcation versus 
the parameter $k$ is still possible, which is discussed below.

\begin{theorem}[fold bifurcation with respect to $k$]\label{thm:foldk}
Let $r> 2-\sqrt{2}-\ln\big(2\sqrt{2}-2\big)$ be fixed.
Then there is a unique point $x(r)$
and a unique parametr value $k^*$ such that $x(r)$ is the fixed point 
of the 1D Chialvo map in the interval $(0,2-\sqrt{2})$ for $k=k^*$
and $x(r)$ undergoes a fold bifurcation with respect to $k$ 
at the bifurcation value $k^*$. Moreover, the fixed point and 
the the bifurcation value are related by the equation 
\[
k^*=x(r)-\frac{x(r)}{2-x(r)}.
\]
\end{theorem}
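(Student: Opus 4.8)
The plan is to follow the template of the proof of Theorem~\ref{thm:fold}, but now to eliminate the exponential factor so as to obtain a single scalar equation for the bifurcating fixed point. As before, a fixed point $x_0$ of $f_{r,k}$ at which the multiplier equals $1$ must satisfy simultaneously the fixed point condition $x_0 = x_0^2\exp(r-x_0)+k$ and the multiplier condition $f'_{r,k}(x_0) = x_0\exp(r-x_0)(2-x_0) = 1$. Note $x_0=0$ is impossible since then $f'_{r,k}(0)=0\neq 1$. Assuming $x_0\neq 0$, the fixed point condition gives $x_0\exp(r-x_0) = (x_0-k)/x_0$; substituting this into the multiplier condition yields $(x_0-k)(2-x_0)/x_0 = 1$, i.e. $k = x_0 - x_0/(2-x_0)$, which is exactly the asserted relation between $x(r)$ and $k^*$. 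This already forces $x_0<2$, and once we show $x_0\in(0,2-\sqrt2)$ it also gives $k^*=x_0(1-x_0)/(2-x_0)\in(0,3-2\sqrt2)$, consistently with the earlier fold theorem and with the standing assumption $k\geq 0$.

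Next I would reduce everything to an equation in $x_0$ alone. From $x_0\exp(r-x_0)=(x_0-k)/x_0$ and $x_0-k = x_0/(2-x_0)$ one gets $\exp(r-x_0)=1/\big(x_0(2-x_0)\big)$, equivalently
\[
r=\varphi(x_0):=x_0-\ln\big(x_0(2-x_0)\big),\qquad x_0\in(0,2).
\]
Conversely, given any $x_0$ with $r=\varphi(x_0)$, defining $k^*$ by the displayed relation makes $x_0$ a fixed point of $f_{r,k^*}$ with $f'_{r,k^*}(x_0)=1$; thus existence and uniqueness of the bifurcation data is equivalent to existence and uniqueness of a root of $\varphi(x)=r$ in the relevant range.

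The core of the argument is therefore a monotonicity analysis of $\varphi$ on $(0,2-\sqrt2)$. A direct computation gives $\varphi'(x)=(-x^2+4x-2)/\big(x(2-x)\big)$, whose numerator vanishes exactly at $x=2\pm\sqrt2$ and is negative on $(0,2-\sqrt2)$; since the denominator is positive there, $\varphi$ is strictly decreasing on $(0,2-\sqrt2)$. As $\varphi(x)\to+\infty$ when $x\to 0^+$ and $\varphi(2-\sqrt2)=2-\sqrt2-\ln(2\sqrt2-2)$, the map $\varphi$ is a decreasing bijection from $(0,2-\sqrt2)$ onto $\big(2-\sqrt2-\ln(2\sqrt2-2),\,+\infty\big)$. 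Hence for every fixed $r>2-\sqrt2-\ln(2\sqrt2-2)$ there is exactly one $x(r)\in(0,2-\sqrt2)$ with $\varphi(x(r))=r$, and then $k^*$ is uniquely determined by $k^*=x(r)-x(r)/(2-x(r))$.

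It remains to verify that this degeneracy is a genuine fold, i.e. that hypotheses (A.1)--(A.2) of Theorem~\ref{bif_theorem} hold with $k$ as the parameter. Condition (A.2) is immediate because $\partial f/\partial k\equiv 1\neq 0$. For (A.1), recall from the proof of Theorem~\ref{thm:fold} that $\frac{\partial^2 f}{\partial x^2}(x,r)=\exp(r-x)\big((x-2)^2-2\big)$, which vanishes only at $x=2\pm\sqrt2$; since $x(r)<2-\sqrt2$ strictly (the inequality on $r$ is strict and $\varphi$ is strictly decreasing), we have $(x(r)-2)^2>2$ and hence $\frac{\partial^2 f}{\partial x^2}(x(r),r)\neq 0$. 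Theorem~\ref{bif_theorem} then gives a fold bifurcation at $x(r)$ for $k=k^*$. The only point requiring care is that the exact threshold on $r$ in the statement is forced by the value of $\varphi$ at the endpoint $x=2-\sqrt2$, which is precisely the endpoint at which (A.1) would degenerate; so the hypothesis on $r$ is sharp for this line of argument, mirroring the role of $k=3-2\sqrt2$ in Theorem~\ref{thm:fold}.
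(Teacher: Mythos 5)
Your proposal is correct and follows essentially the same route as the paper: combine the fixed point and multiplier conditions to get $k^*=x(r)-x(r)/(2-x(r))$, reduce to the multiplier equation $r=x-\ln\big(x(2-x)\big)$ having a unique root in $(0,2-\sqrt2)$, and then check (A.1)--(A.2) of Theorem~\ref{bif_theorem}. You merely supply the details the paper leaves implicit — the monotonicity analysis of $\varphi$ behind the claimed uniqueness, and the explicit verification that $\frac{\partial^2 f}{\partial x^2}$ only degenerates at $x=2-\sqrt2$, which is exactly the endpoint excluded by the hypothesis on $r$.
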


\begin{figure}[!htbp]\vspace{2mm}
\centering{
  \includegraphics[scale=0.32,trim= 31mm 8mm 35mm 10mm]{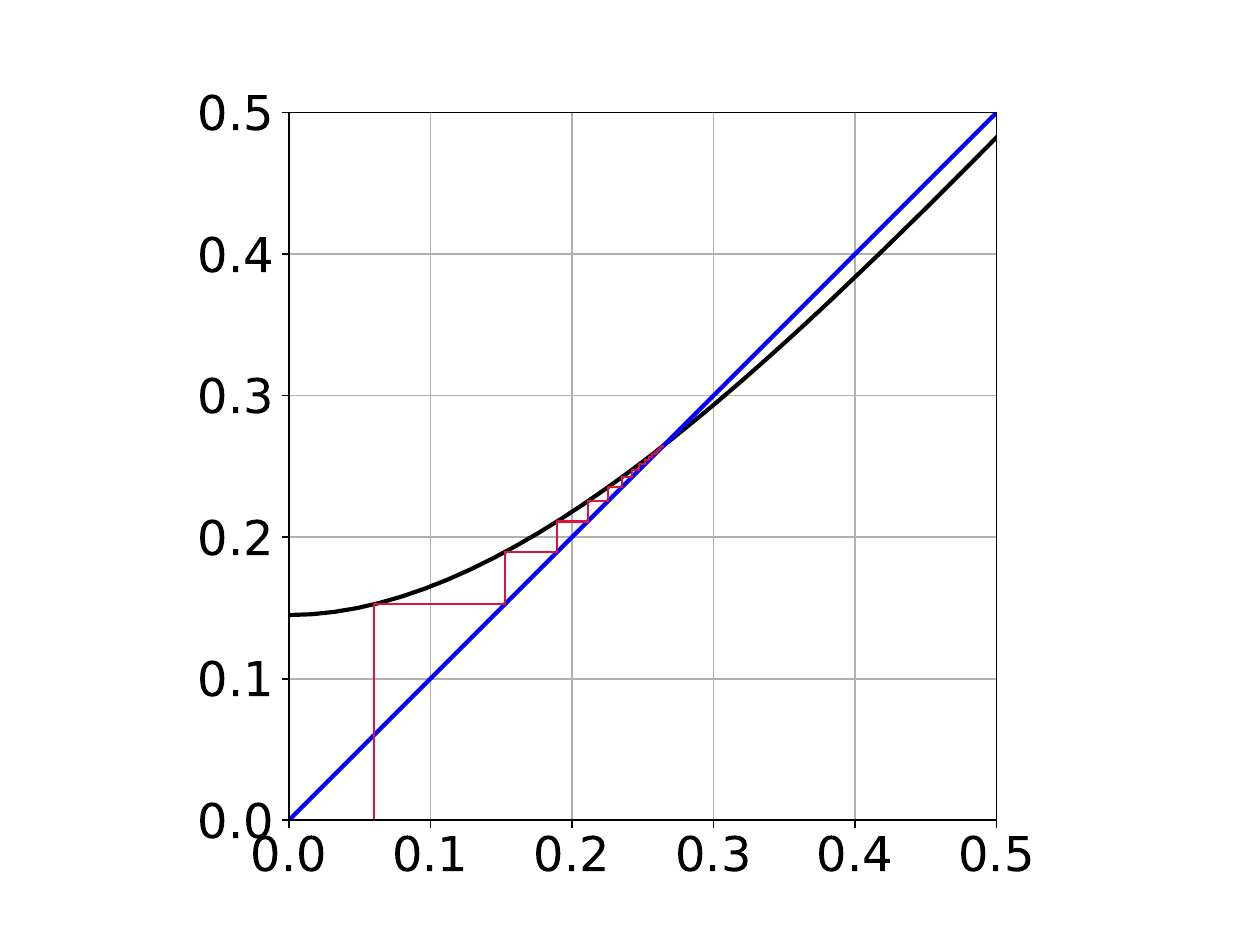}
  \includegraphics[scale=0.32,trim= 5mm 8mm 35mm 10mm]{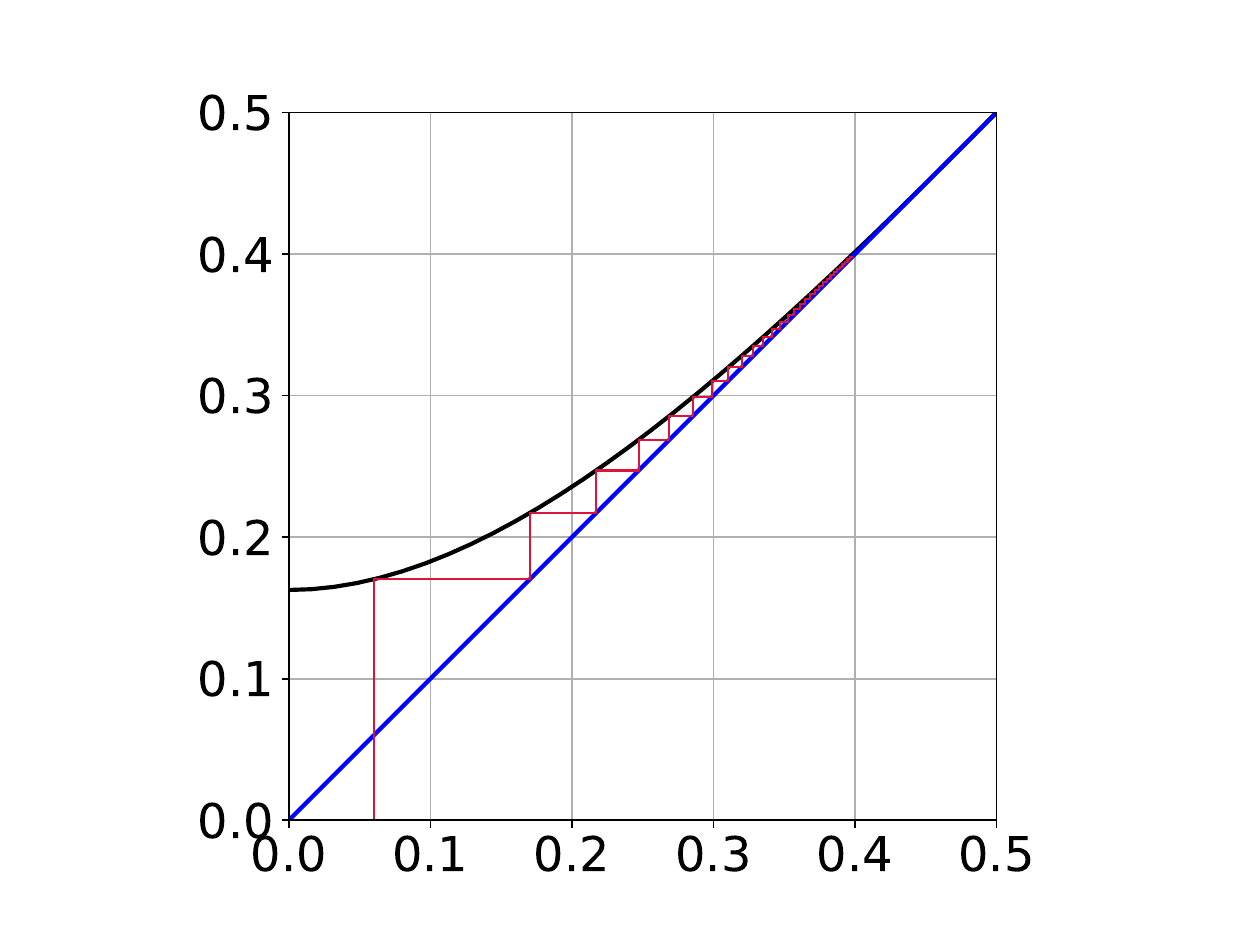}
  \includegraphics[scale=0.32,trim= 5mm 8mm 35mm 10mm]{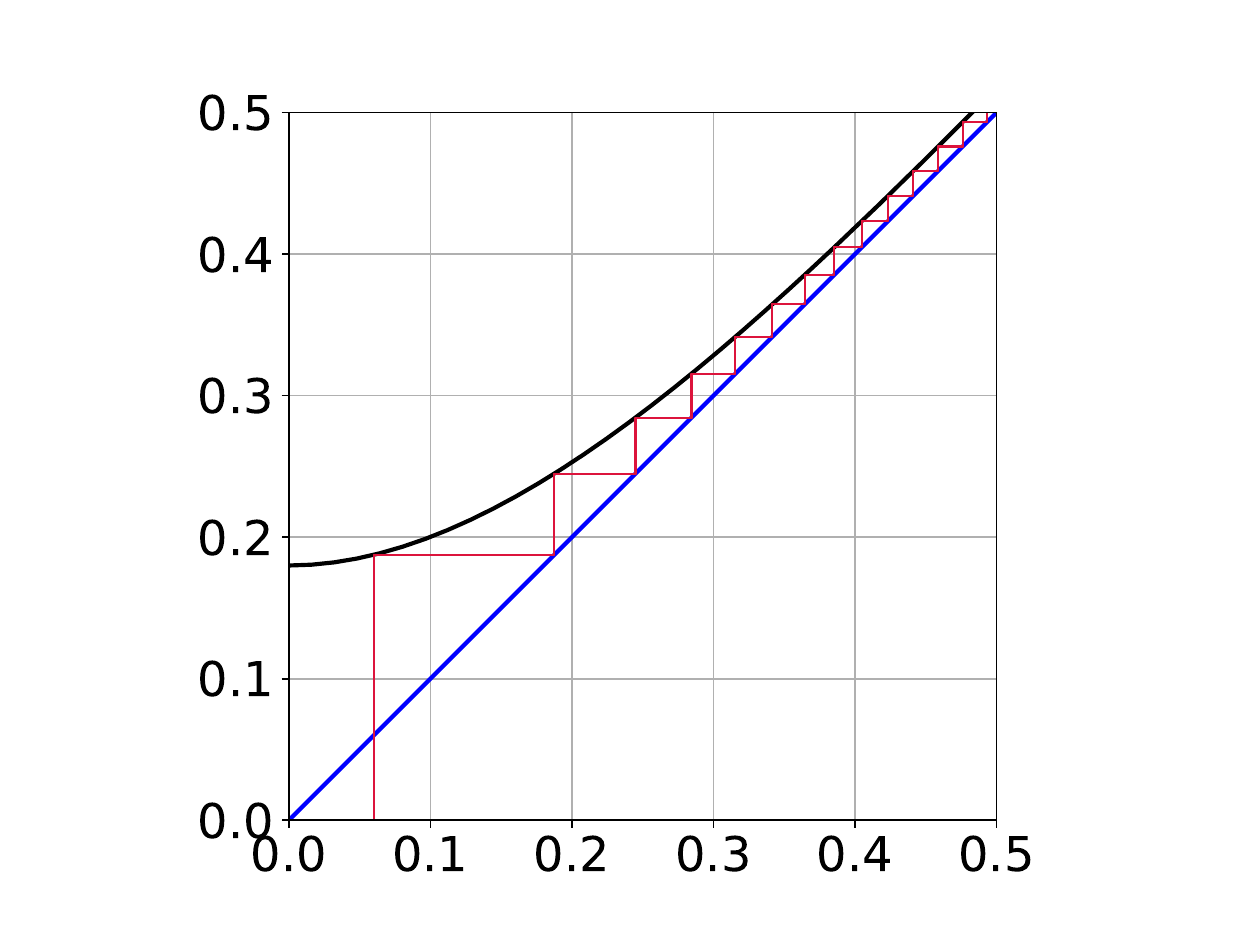}}
\caption{Fold bifurcation in the 1D Chialvo model with $k$ as 
a bifurcation parameter ($r=0.08$). Left: $k = 0.151$. Center: 
$k = 0.1627$ (numerical bifurcation value). Right: $k = 0.18$.}
\label{fig:kfoldbif}
\end{figure}

\begin{proof}
Observe that for $r> 2-\sqrt{2}-\ln\big(2\sqrt{2}-2\big)$
the multiplier equation 
\[
(2x-x^2)\exp{(r-x)}=1
\]
has a unique solution $x(r)$ in the interval $(0,2-\sqrt{2})$.
Since the fixed point and the multiplier conditions read 
as follows:
\begin{align*}
       x^2\exp{(r-x)}+k&=x,\\ 
       (2x-x^2)\exp{(r-x)}&=1,
\end{align*}
one obtains that $x(r)$ is the fixed point if and only if
$k=x-\frac{x(r)}{2-x(r)}$. It remains to check 
the fold bifurcation conditions from Theorem \ref{bif_theorem}. 
But both conditions (A.1) and (A.2) are
obviously satisfied under our assumptions,
which completes the proof.
\end{proof}

Figure~\ref{fig:kfoldbif} illustrates an example of 
the fold bifurcation with respect to $k$. 
Namely, for $r=0.8$, the fixed point
$x(r)\approx 0.4695$ undergoes a fold bifurcation, 
with bifurcation parameter $k^*=0.1627$. 

\subsection{Implications for the 2D Chialvo 
model and neural activity}\label{sec:implications}

We notify that often the knowledge about the reduced subsystem
\eqref{eq:Chialvo1DIM} gives insight into the dynamics of 
the full model in a properly chosen parameters range.
Figures~\ref{fig:BurstingPeriod4} and~\ref{fig:KillingBursting}
present the behaviour of the voltage $x$ and 
the recovery variable $y$ over simulation time of 
$n=80$ iterates for $a=0.876$ and $c=0.28$ and initial conditions
$(x_\textrm{init},y_\textrm{init})=(5,3)$. 
The trace of the voltage
$x$ is marked in red and of the recovery variable $y$ in blue. 
For $b=0$ (Figure~\ref{fig:BurstingPeriod4}) 
we observe stabilization
of $y$ at $r_1:=y\approx 2.258$ and bursting of period $4$ for 
the voltage variable. Small increase in the parameter $b$ to 
$b=0.02$ (Figure~\ref{fig:KillingBursting}) yielded, 
for the same initial conditions, stabilization of the recovery
variable at $r_2:=y\approx 1.8$ and resting of the voltage at
$x\approx 2.84$ after initial damped oscillations. 
The corresponding reduced models given by iterates of 
$f_{r_1,k=0}$ and $f_{r_2,k=0}$, respectively, display 
(except for the superattracting fixed point $x_0=0$) 
attracting period $4$ periodic orbit and non-zero stable 
fixed point $x_f\approx 2.84$, correspondingly. 
Both cobwebs present the orbit starting at $x=5$. 
In Figure \ref{fig:KillingBursting} the first $80$ iterates are
depicted, whereas Figure \ref{fig:BurstingPeriod4} shows only 
iterates between $60$ and $80$ for clarity.

\begin{figure}[!ht]
\centering{
    \includegraphics[scale=0.21, trim= 5mm 10mm 0mm 10mm]{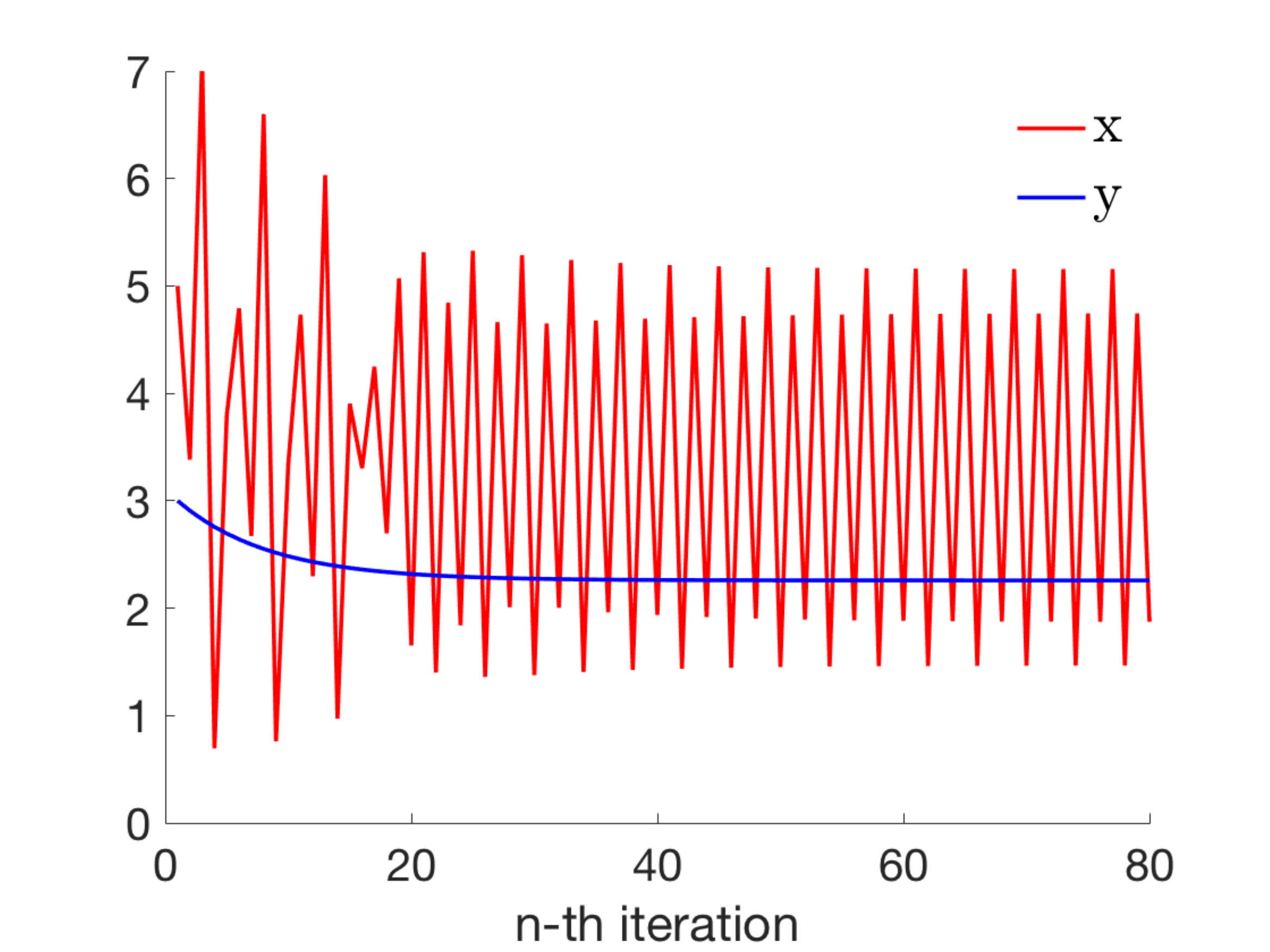}
    \includegraphics[scale=0.21, trim= 12mm 0mm 20mm 10mm]{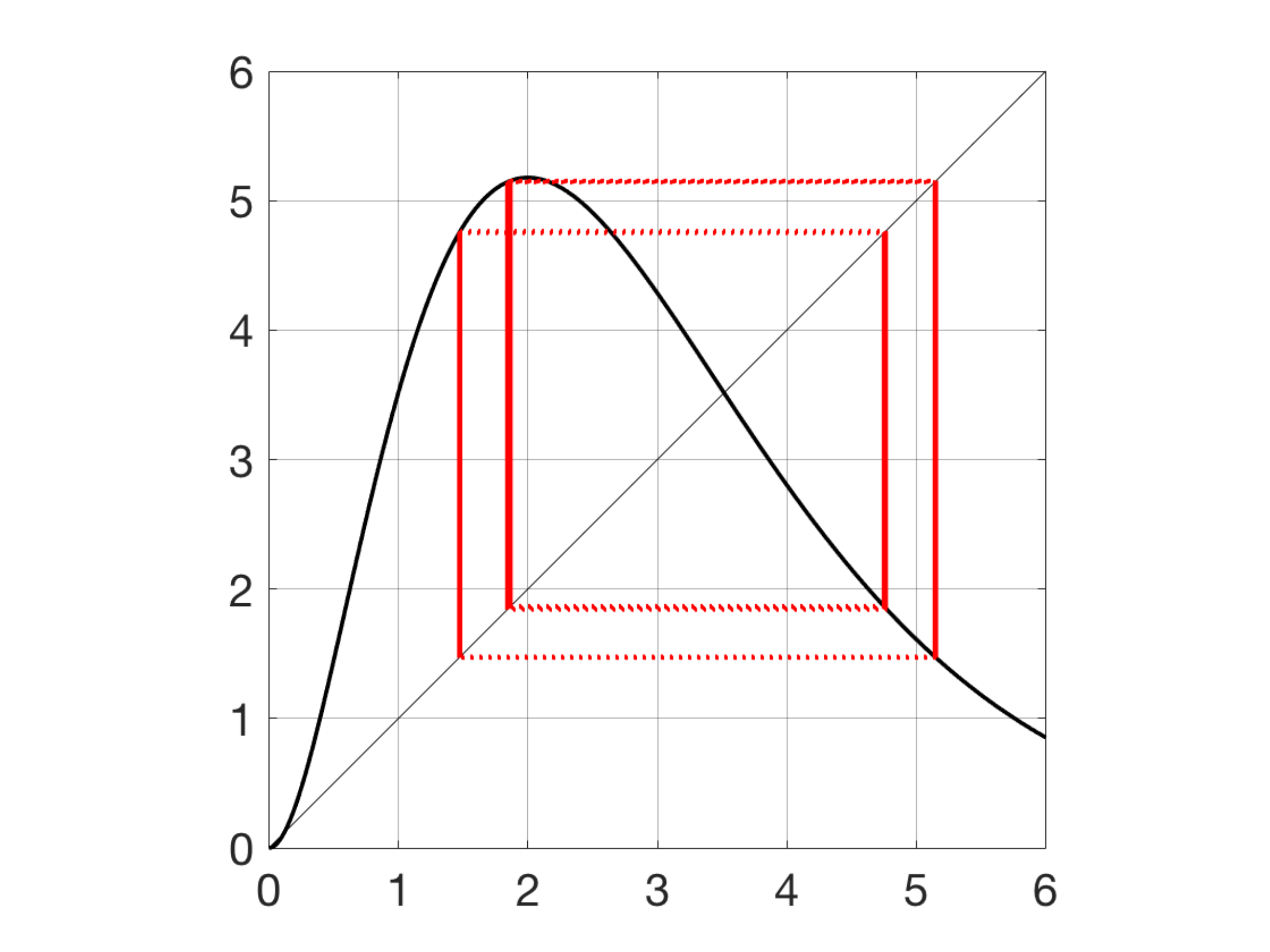}}
    \caption{Bursting of period $4$ in the 2D Chialvo model (left) and the corresponding period $4$ orbit in the reduced model (right). See text for details.}
    \label{fig:BurstingPeriod4}
\end{figure}

\begin{figure}[!ht]\vspace{-5mm}
\centering{
    \includegraphics[scale=0.21, trim= 5mm 10mm 0mm 10mm]{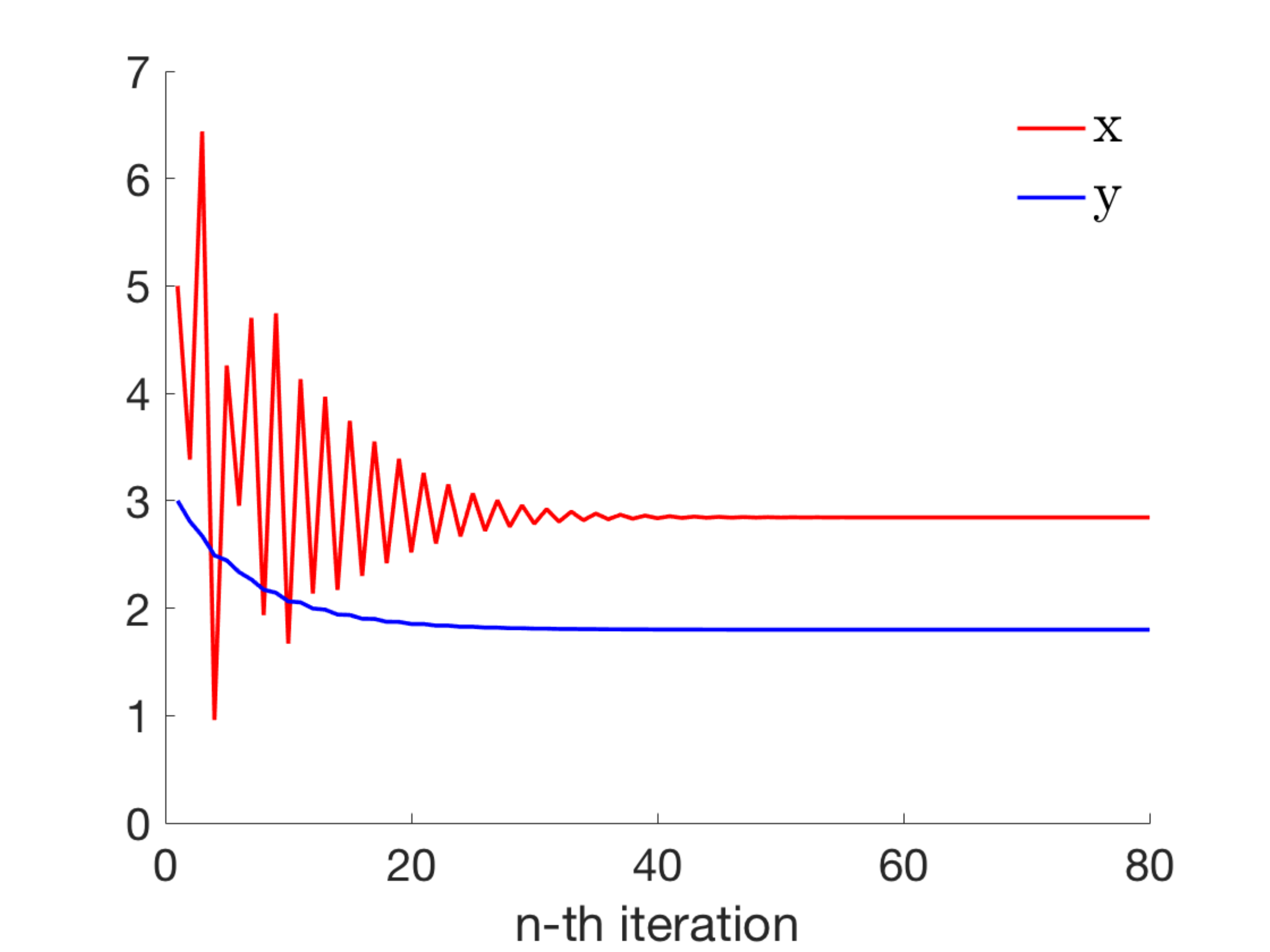}
    \includegraphics[scale=0.21, trim= 12mm 0mm 20mm 10mm]{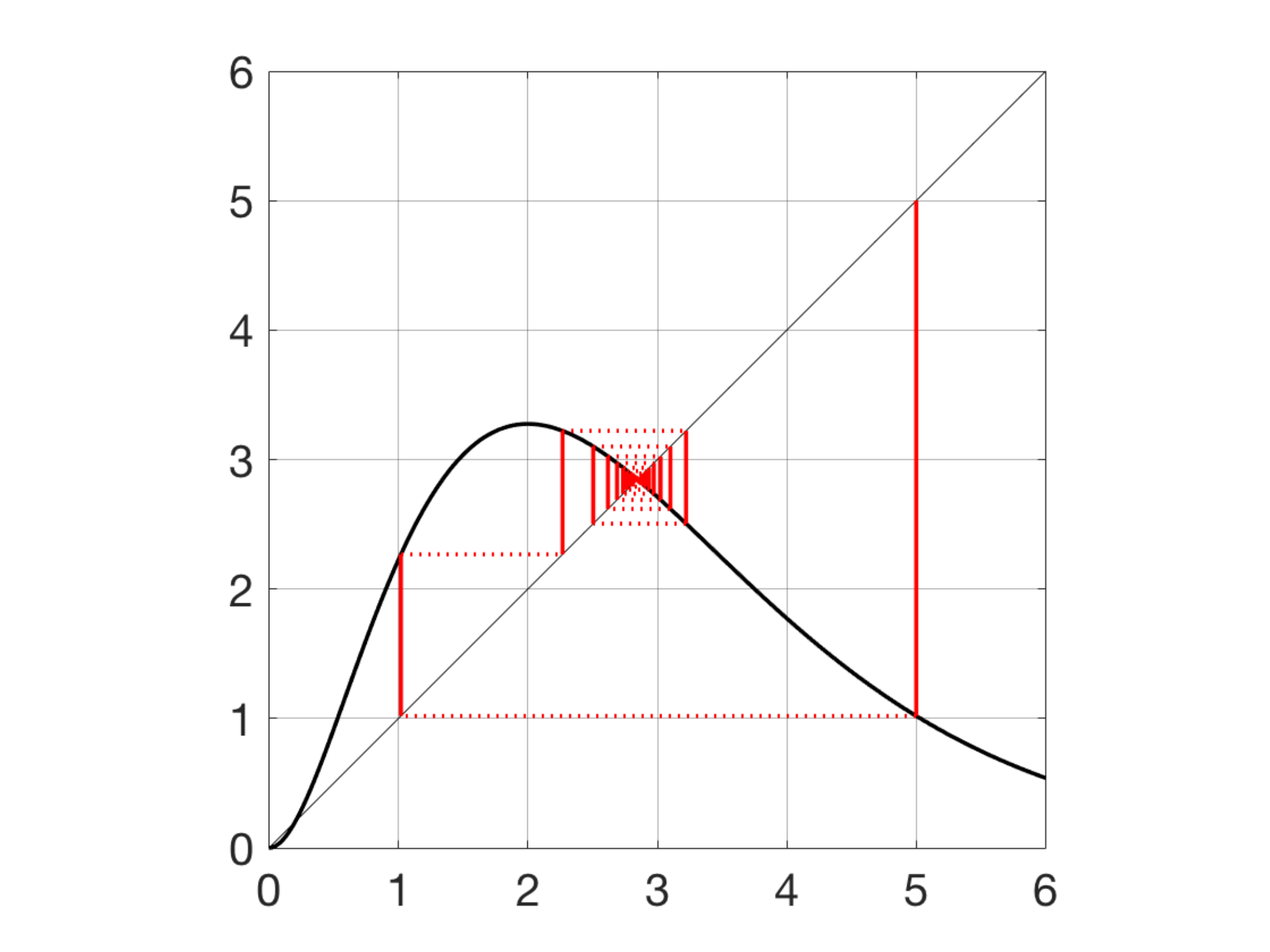}}
    \caption{Resting of the voltage in the 2D Chialvo model (left) and the corresponding stable fixed point in the reduced model (right). See text for details.}
    \label{fig:KillingBursting}
\end{figure}

Our bifurcation results (Thms~\ref{thm:flip}, \ref{thm:fold} 
and~\ref{thm:foldk}) confirm that a single neuron in 
the reduced Chialvo model is able to exhibit diverse 
activity regimes and periodic state transitions.
Since quiescence plays the fundamental role in the analysis
of neuronal activity, it seems to be crucial to describe
the basic possible scenarios that fixed states
of the model can undergo. Recall that the bifurcation parameters 
$r$ (Thms~\ref{thm:flip} and~\ref{thm:fold})
and $k$ (Thm~\ref{thm:foldk}) coincide, respectively,
with the recovery variable of the 2D Chialvo system
and the additive voltage perturbation.
Note that, in general, the~flip bifurcation, 
which usually starts the period-doubling route to chaos
(compare with Sec.~\ref{sec:periodic}), corresponds
to transition from  quiescence to regular tonic spiking,
while the fold bifurcation corresponds to appearance 
of new resting states.

However, our main motivation behind
studying the 1D Chialvo model is not to show that it well 
approximates the dynamics of the full 2D model but 
to point out that the 1D system itself represents very rich dynamics.  
Indeed, within some parameters range the behaviour 
of the 1D map might be even more complicated. 
For example, while experimenting numerically with 
the Chialvo model we have found that with the parameters set 
as $a=0.866$, $b=0.05$, $c=0.48$ and $k=0$ 
the trajectory starting from the initial conditions
$(x_\textrm{init},y_\textrm{init})=(2.8,1.5)$ 
converges numerically to the period~$4$ cycle
$\{(x_0,y_0), (x_1,y_1), (x_2,y_2), (x_3,y_3)\}$ with

\begin{center}
\begin{tabular}{ c|c|c|c|c|c|c|c } 
 $x_0$ & $y_0$ & $x_1$ & $y_1$ & 
 $x_2$ & $y_2$ & $x_3$ & $y_3$ \\ 
 \hline
 $1.4646$ & $2.2539$ & $4.7230$ & $2.3586$ & 
 $2.0970$ & $2.2864$ & $5.3144$ & $2.3552$
\end{tabular}
\end{center}

On the other hand, the corresponding 1D model 
(with $k=0$ and $x_\textrm{init}=2.8$) showed period~4 
periodic attractor for $r\in \{y_0,y_2\}$, 
period~12 periodic attractor for $r=y_3$ and 
chaotic interval attractor for $r=y_1$, 
which agrees with our numerically obtained
bifurcation diagram for the~Chialvo map $f_r$ with $k=0$ 
(not shown here, but similar to that 
from Figure~\ref{fig:bif_diagram}).
In Figure \ref{fig:NEW_Review} we present,
starting from top left in clockwise direction, 
the time-series of $x$- and $y$-variables for 
the~2D model and the corresponding trajectories of 
the reduced model (with $x_\textrm{init}=2.8$ 
after removing a transient of 970 iterates) 
with $r=y_0$, $r=y_1$ and $r=y_3$, respectively. 
This behaviour exemplifies that sometimes 
the reduced model might, counter-intuitively, 
show more complicated dynamics than some of its 
2D extensions. Therefore understanding of 
the 1D model bifurcation structure 
seems to be a relevant standalone topic. 
\begin{figure}[!htb]
\centering{
    \includegraphics[scale=0.2, 
    trim= 5mm 10mm 0mm 10mm]{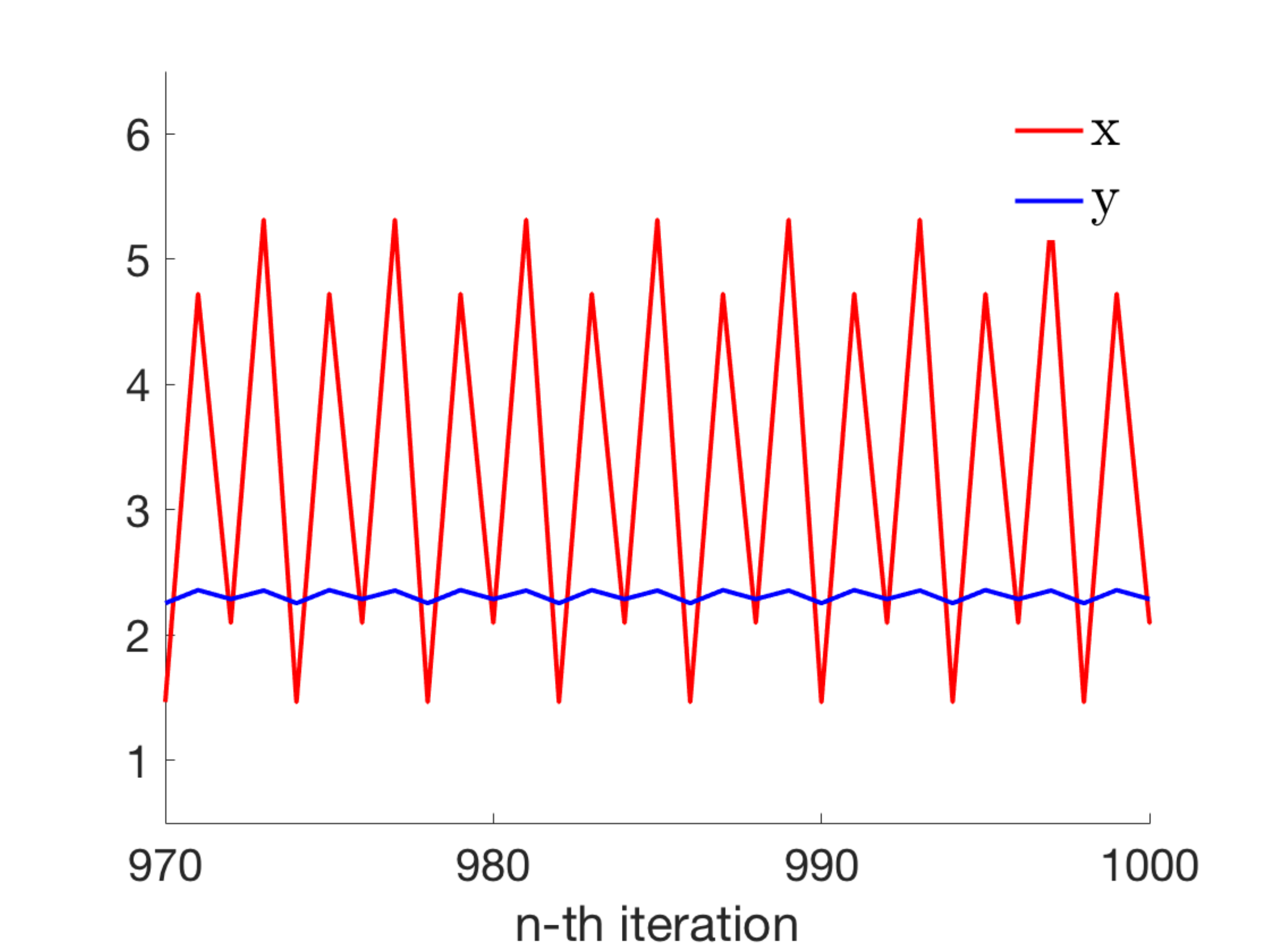}
    \includegraphics[scale=0.2, 
    trim= 13mm 0mm 20mm 10mm]{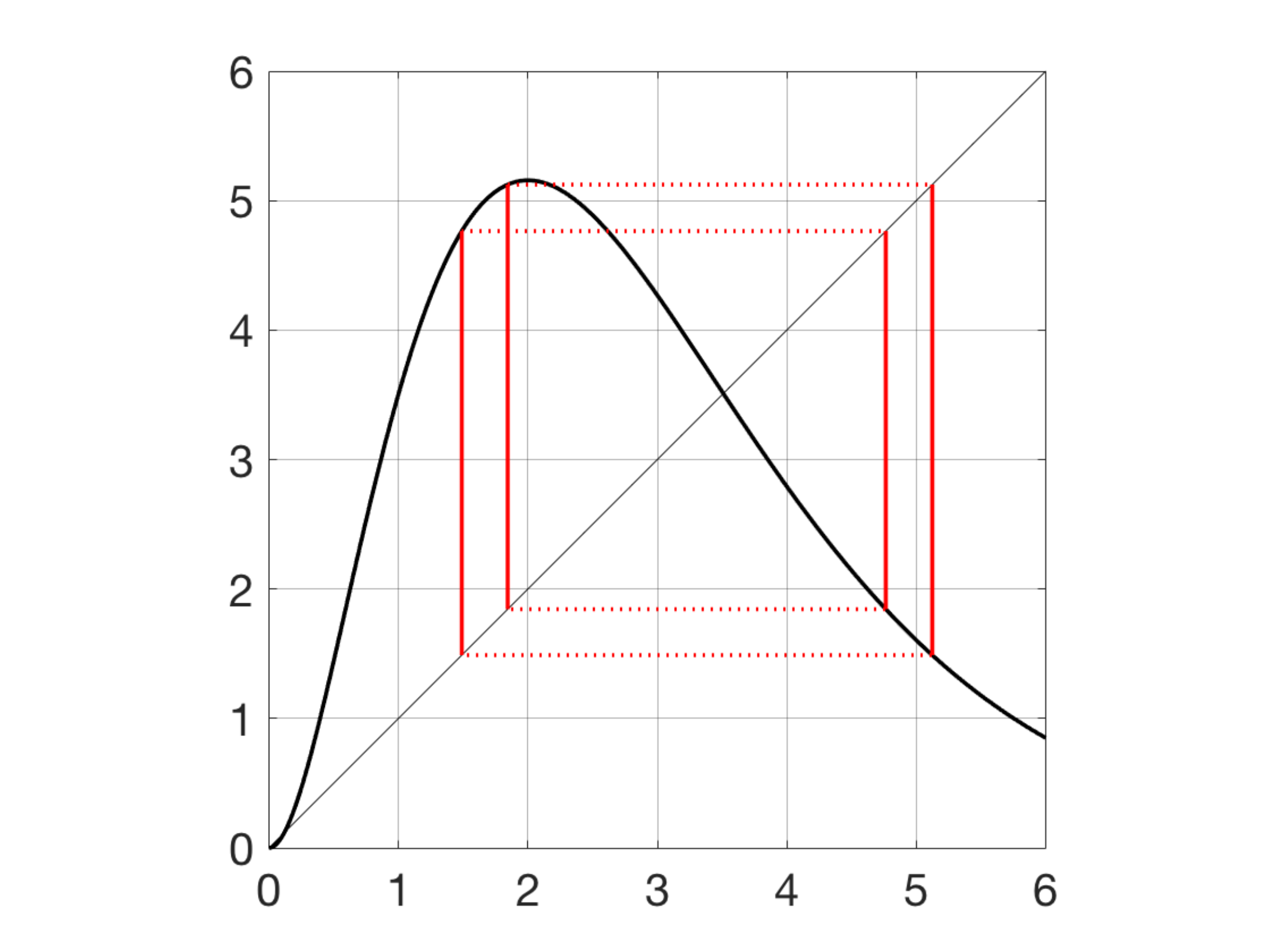}
    \vskip 0.4cm
    \includegraphics[scale=0.2, 
		trim= 12mm 0mm 20mm 10mm]{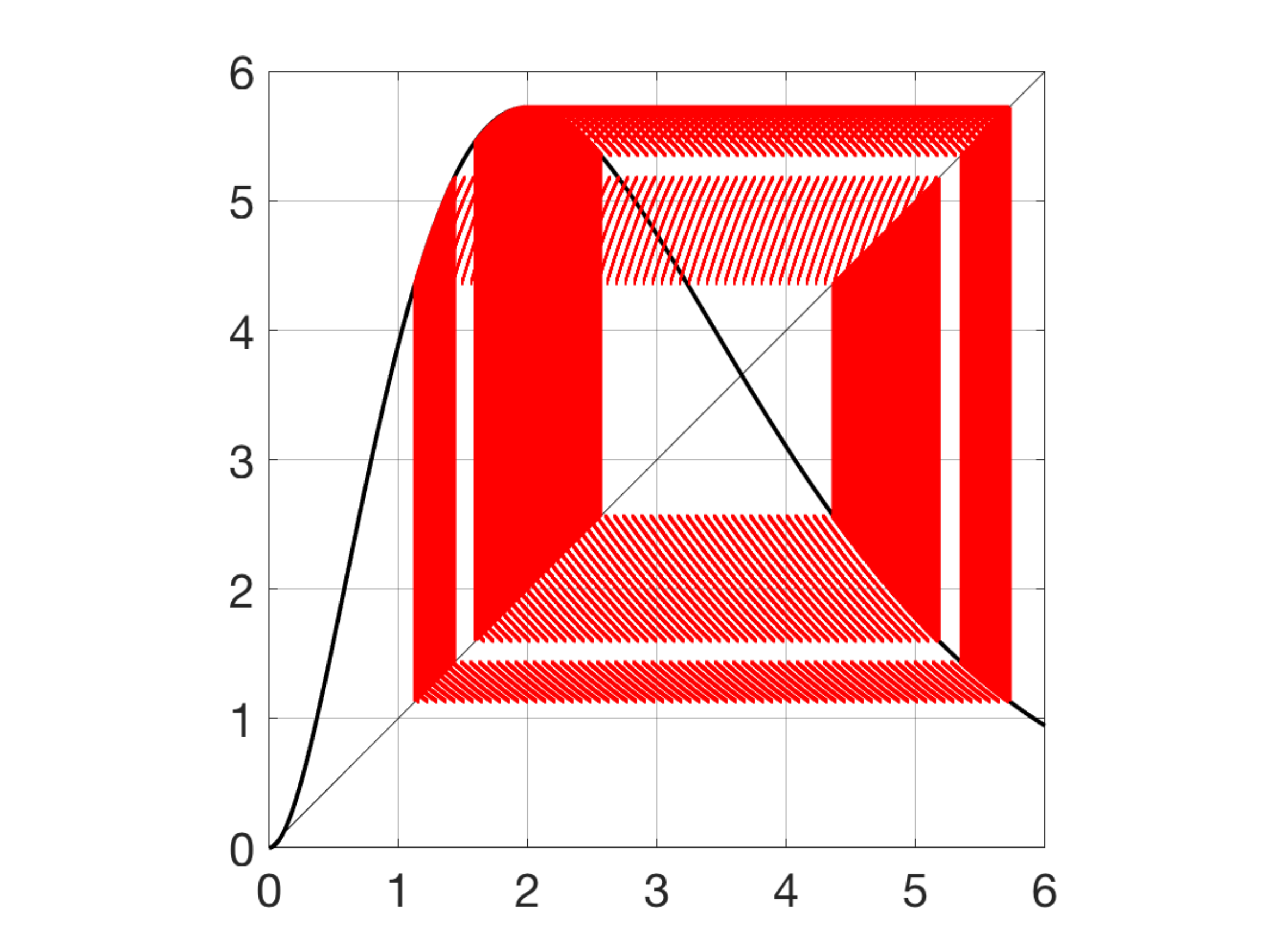}\hskip 0.6cm
    \includegraphics[scale=0.2, 
		trim= 12mm 0mm 20mm 10mm]{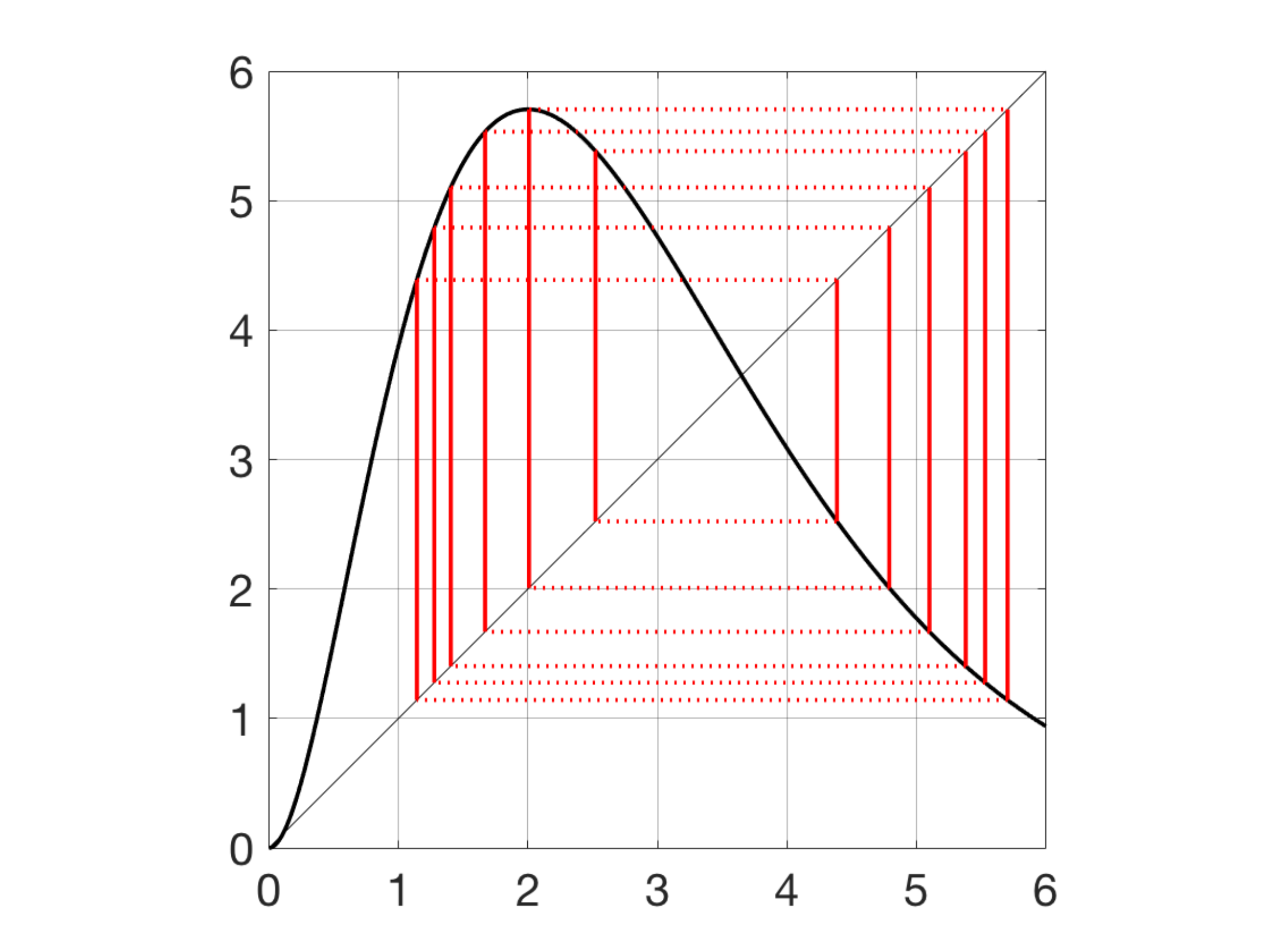}}
    \caption{Regular period 4 bursting in the 2D model 
		and the behaviour of the reduced model 
		in the corresponding range of $r$ parameter values. 
		See main text for details.}\vspace{-3mm}
    \label{fig:NEW_Review}
\end{figure}

We close this section with Figure~\ref{fig:bif_diagram} 
which shows one-parameter bifurcation
diagrams for the 1D Chialvo model, obtained in Python.
One can observe that the $r$ dependence
for the Chialvo family is similar to the parameter-dependence 
in the logistic family, whilst the $k$ dependence is similar 
to that in the Gauss family 
(with respect to the additive parameter).
\begin{figure}[!h]
\centering{
    \includegraphics[scale=0.185, 
    trim= 10mm 10mm 0mm 0mm]{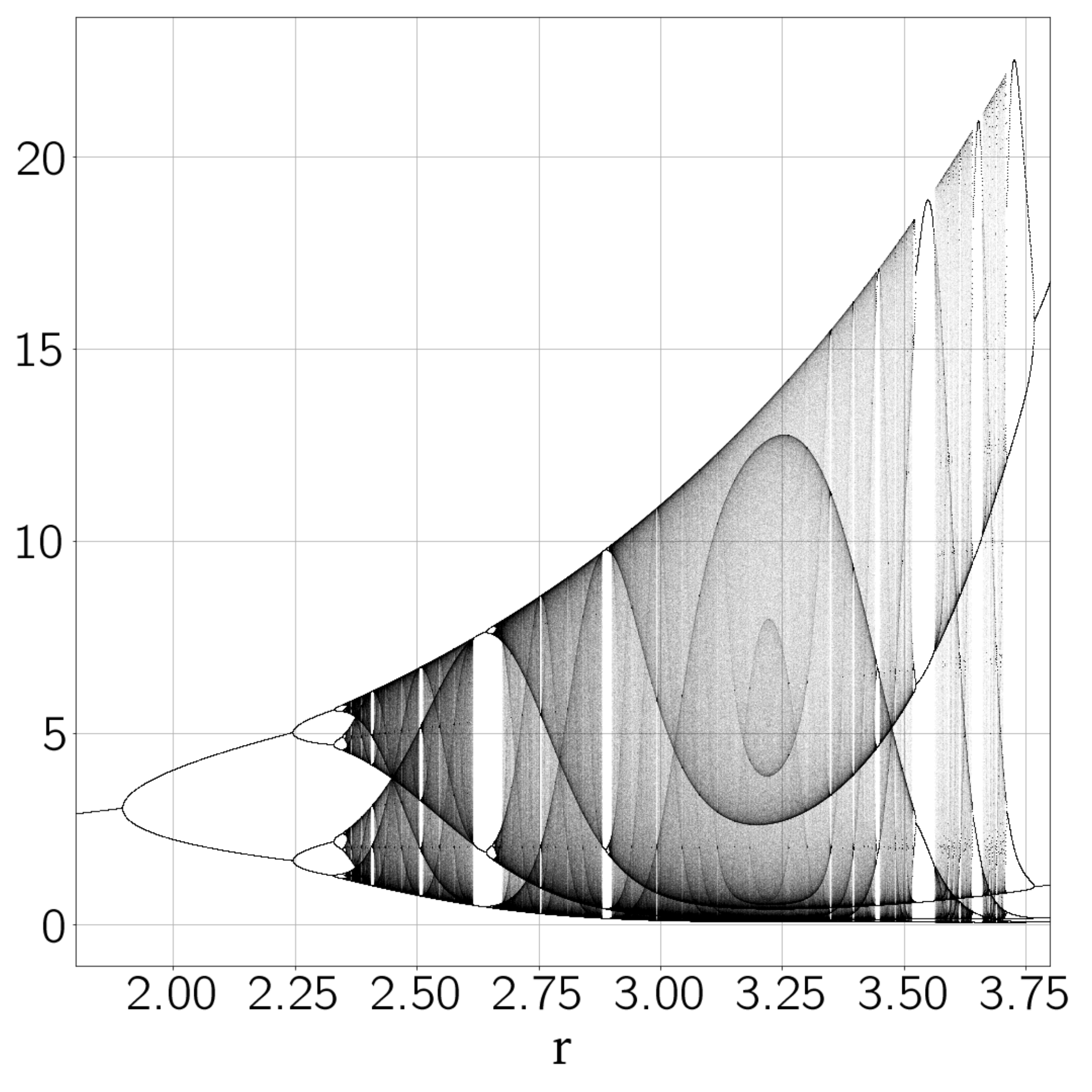}
    \hspace{9mm}
    \includegraphics[scale=0.185, 
    trim= 0mm 10mm 0mm 0mm]{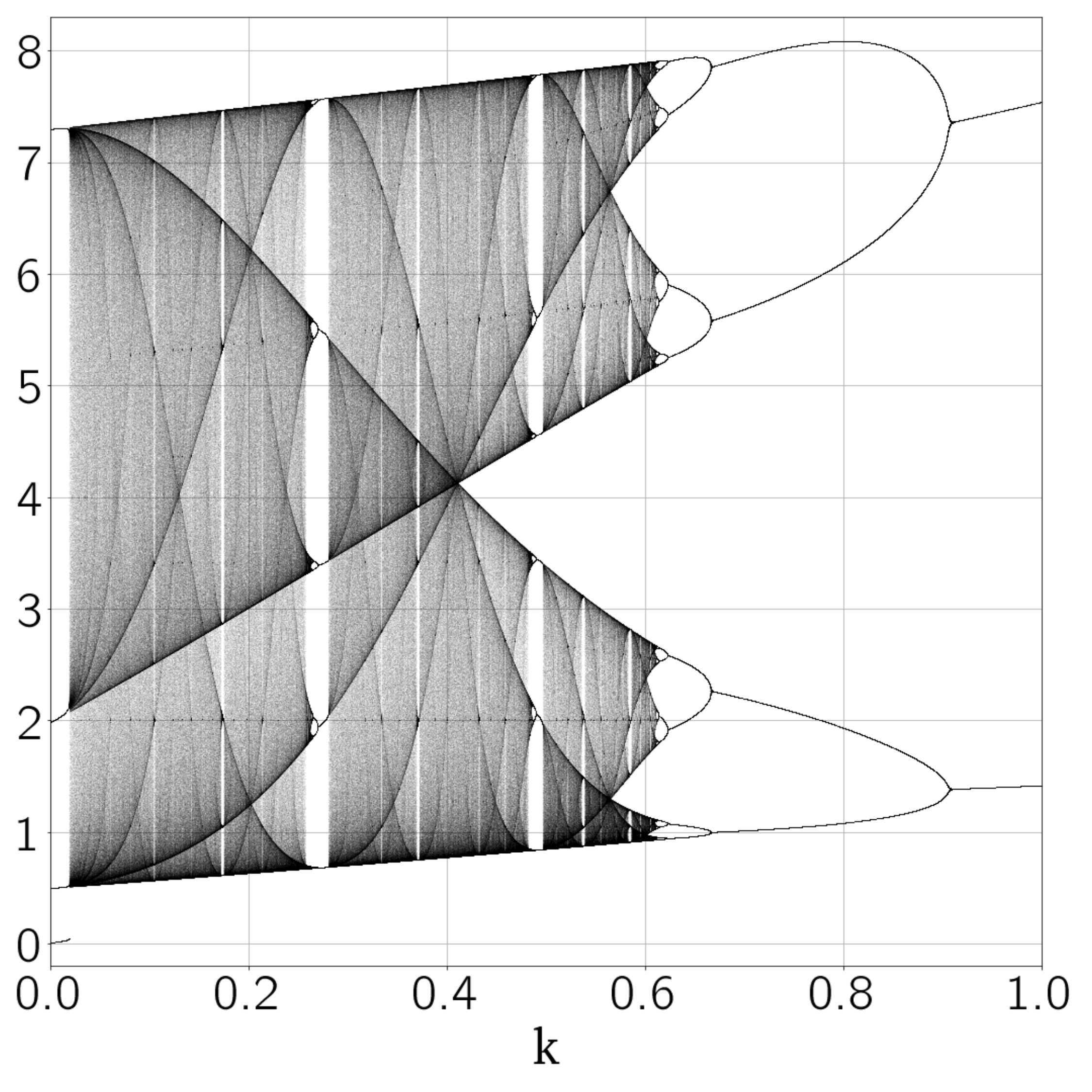}}
    \caption{Bifurcation diagrams for the Chialvo family: 
    versus $r$ with fixed
    $k=0.05$ (left) and versus $k$ with fixed $r=2.6$ (right).}
    \label{fig:bif_diagram}
\end{figure}

\section{Dynamical core of the Chialvo map}\label{sec:dyncore}
Recall that our definition of unimodality requires 
that either the left endpoint  of the interval domain $I$ 
of the map $f$ is a fixed point with the right endpoint 
of the interval as its other preimage, 
or that $I=\left[f^2(c),f(c)\right]$, where $c$ is 
the critical point (maximum) of $f$.
In this case we say that the interval $I$ is 
the \emph{dynamical core} of $f$, i.e., the invariant 
interval where all nontrivial dynamics concentrates.
An example of the dynamical core for some unimodal map
is showed in Fig.~\ref{fig:dyncore}.
\begin{figure}[!ht]
\centering{
\includegraphics[scale=0.33, 
trim = 40mm 5mm 10mm 7mm]{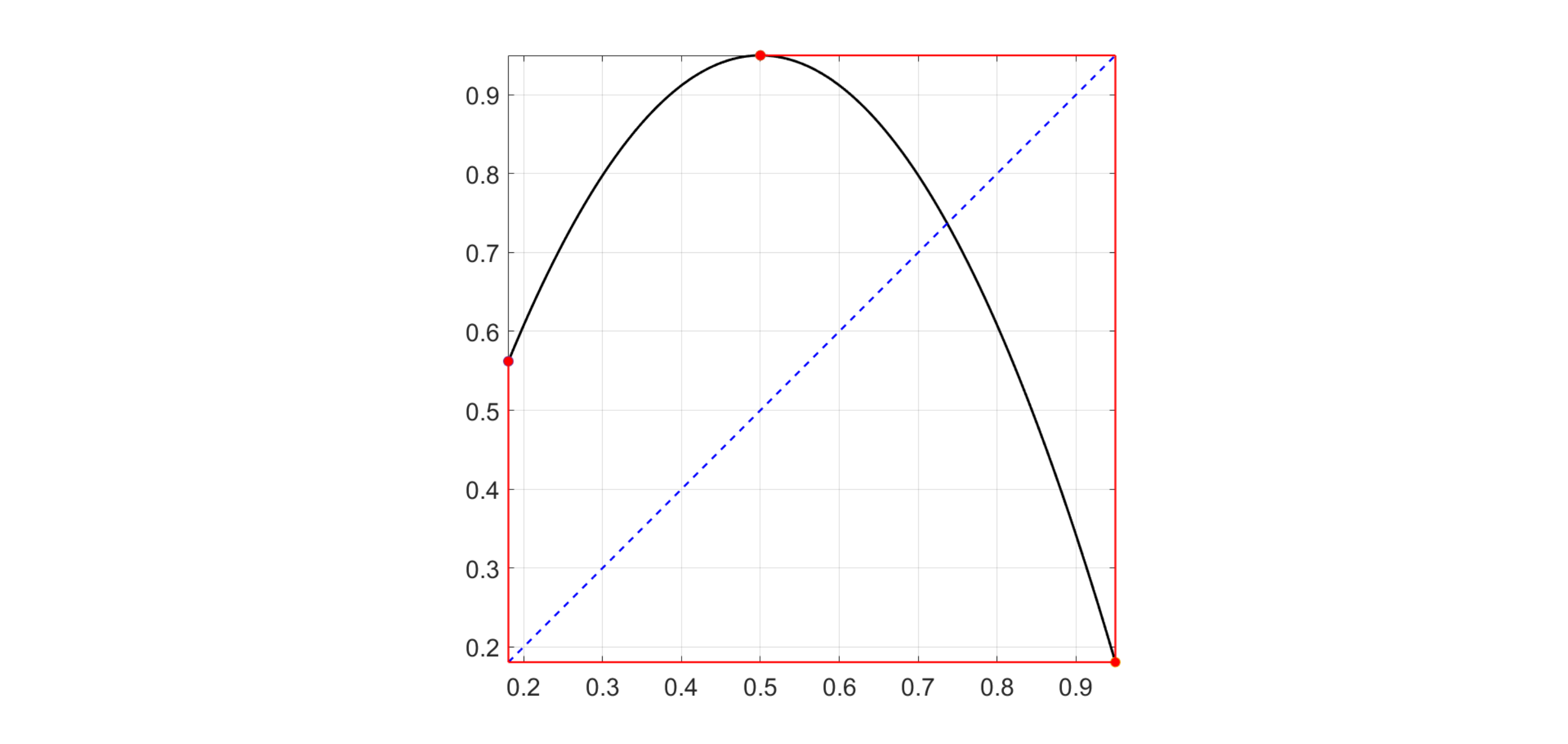}}
\caption{Interval $[f^2(c),f(c)]$ as a dynamical core 
($c=1/2$ is a critical point)}
\label{fig:dyncore}
\end{figure}

Below we discuss the form of the dynamical core $I_{r,k}$ for
$f_{r,k}(x)=x^2\exp(r-x)+k$ with respect to parameters
$k$ and $r$ in order to justify that the map $f_{r,k}$
describing the 1D Chialvo model is S-unimodal on 
an appropriate subinterval of $[0,+\infty)$.

\subsection{Case $k=0$}
Let us firstly consider the case $k=0$ and the corresponding 
map $f_{r,0}(x)=x^2\exp{(r-x)}$. 
The map $f_{r,0}$ has exactly
\begin{itemize}
\item one fixed point $x_0=0$, 
\item two fixed points: $x_0=0$ and $x_1$ satisfying $0<x_1<2$, 
\item three fixed points: $x_0$, $x_1$ and $x_2$ satisfying $x_0=0$, $0<x_1<x_2$.
\end{itemize}
In  the  first  two  instances,  the dynamics is trivial. 
Namely, every initial condition is attracted either by 
the superattracting fixed point $x_0$ or 
by the semistable fixed point $x_1$ (if exists). 
Configuration with three fixed points splits into two cases,
the second with two subcases:
\begin{enumerate}
\item $x_2\leq 2$,
\item $x_2>2$: subcase 2.1.\ $x_2$ is stable, subcase 2.2.\ $x_2$ 
is unstable.
\end{enumerate}
Of course, there is also a special case when $x_2=3$ undergoes flip
bifurcation (when $2<x_2<3$, then $x_2$ is stable and if $x_2>3$ 
then it is unstable as follows from simple calculations in the part
concerning flip bifurcation). 
The cases 1. and 2.1. are again trivial. 
Therefore for our further developments we shall be interested 
in case 2.2. Note that under this condition $x_2$ satisfies
$x_2\exp(-x_2)=\exp(-r)$ and for the derivative we have 
$\vert f^{\prime}_{r,0}(x_2)\vert =\vert 2-x_2\vert>1$. 
Therefore with the increase of $r$ the fixed point $x_2>3$ moves
further to the right and remains unstable.  
Simultaneously the fixed point $x_1$ remains to the left of $c=2$ 
and is unstable. If, moreover, $x_1<f^2_{r,0}(2)$, then 
the interval $[f^2_{r,0}(2),f_{r,k}(2)]$ can be treated as 
the dynamical core of the map $f_{r,0}$, because every initial
condition $x\in (x_1,\infty)$ is after a few iterates mapped into
$[f^2_{r,0}(2),f_{r,k}(2)]$. On the other hand, all the points in
$(0,x_1)$ are monotonically attracted to the resting state $x_0=0$. 
We are mainly interested in this case, i.e., when 
$x_1\notin [f^2_{r,0}(2), f_{r,0}(2)]$, 
which holds at least for some interval of $r$-values 
(see Lemma \ref{lem:DynCore} and Remark \ref{Remark32}).

\subsection{Case $k>0$}
Now let us consider the case $k>0$. Note that when $k\geq 2$, 
the dynamics of $f_{r,k}$ is actually trivial: 
indeed for $k\geq 2$ there is exactly one fixed point $x_0$ 
which is located on the right (decreasing) part of the graph 
(i.e. $x_0>k\geq 2$). As also $f_{r,k}^2(2)>k\geq 2$, 
the interval $[2,f_{r,k}(2)]$ is an invariant interval 
and every initial condition $x\in\mathbb{R}$ is after 
at most one iterate mapped into this interval. 
On the other hand, as $f_{r,k}$ is strictly decreasing 
on this interval, every initial condition
$x\in\mathbb{R}\setminus\{x_0\}$ is actually attracted to $x_0$ 
(if $x_0$ is attracting) or to period two periodic orbit 
(if $x_0$ is not attracting). Obviously, in this case 
the dynamical core can be set as $[f^2(2), f(2)]$. 

Therefore assume that $0<k<2$. Then (especially for small values 
of $r$) one possibility is that $f_{r,k}$ has only one fixed point
$x_0$, which is located on the left branch: $x_0\leq 2$. Note that
in this case $x_0$ is globally attracting and thus this is 
the trivial case. If $x_0$ is a unique fixed point but $x_0>2$, 
then, depending on $r$, the fixed point $x_0$ can be either stable 
or unstable,  but, as $f^2_{r,k}(2)>k=f_{r,k}(0)$, we can set 
the interval $[f^2_{r,k}(2),f_{r,k}(2)]$ as the dynamical core, 
which contains exactly one fixed point $x_0>2$. 
Lemma \ref{lem:DynCore} asserts that this always holds 
for $r$ large enough. 

 The situation when $f_{r,k}$ has exactly two fixed points is 
 very rare (it holds e.g. when there is a fixed point $x_0>0$ 
 and the other fixed point $x_1=x_2$ is just undergoing 
 the fold bifurcation). Thus the effective complementary case 
 for unique fixed point $x_0$ is the situation when there 
 are exactly three fixed points $0<x_0<x_1<x_2$ and necessarily 
 then $x_1\leq 2$. If also $x_2\leq 2$, then every 
 initial condition (except $x_1$) is attracted either to $x_0$ 
 or to $x_2$. It follows that for $0<k<2$ the non-trivial 
 generic case with three fixed points is the situation when
 $0<x_0<x_1<2<x_2$ (with $x_2$ that can be stable or unstable).  
 If $f_{r,k}^2(2)<x_1$, then as a dynamical core 
 we set $[x_0,y_0]$, where $y_0\in f_{r,k}^{-1}(x_0)$ 
 and $y_0\neq x_0$ (note that such a point $y_0$ exists 
 since $f_{r,k}(x) \rightarrow k<x_0$ as $x\to \infty$). 
 Then $f_{r,k}\colon [x_0,y_0] \to [x_0,y_0]$ matches 
 the definition of the unimodal map. On the other hand, 
 if $f_{r,k}^2(2)\geq x_1$, we can assume that the dynamical core 
 is $[f_{r,k}^2(2), f_{r,k}(2)]$ (the map $f_{r,k}$ 
 restricted to this interval is unimodal).

\subsection{Auxiliary results on dynamical core} 
This subsection contains several results concerning the
dynamical core of the 1D Chialvo map, which will be of our 
interest in the next section 
(for example in the proof of Theorem \ref{thm:ai}). 
The technical proofs of Lemmas~\ref{lem:DynCore} and \ref{lem:UnstableFP} 
are postponed to Appendix~\ref{appendix:core}.

\begin{lemma}\label{lem:DynCore} For any $0\leq k<2$  
there exists $r^*=r^*(k)$ such that for all $r\geq r^*(k)$ we have
\begin{equation}\label{LemmaDynCoreEq}
f_{r,k}^2(c)<c<f_{r,k}(c),
\end{equation}
where $f_{r,k}(x)=x^2\exp(r-x)+k$ is the 1D Chialvo map and $c=2$ 
is its critical point. Moreover, for $k\neq 0$ the value 
of $r^*(k)$ can be chosen such that, in addition, 
for $r\geq r^*(k)$ the interval $[f_{r,k}^2(c),f_{r,k}(c)]$  
contains exactly one fixed point $x_f$.  
\end{lemma}

\begin{remark}\label{Remark32} For $k=0$, demanding that
\eqref{LemmaDynCoreEq} is satisfied and simultaneously 
there are no fixed points in $[f_{r,0}^2(c),f_{r,0}(c)]$ 
to the left of $c$ requires some more specific range 
of $r$ values. Indeed, for $k=0$ the map can have 
three fixed points $x_0=0$, $0<x_1<2$ and $x_2>2$ with $x_1$ 
that actually can fall into the interval 
$[f_{r,0}^2(c),f_{r,0}(c)]$. We have verified numerically 
that for any $r\in [2.1, 2.97]$ the inequality 
\eqref{LemmaDynCoreEq} is satisfied with $f^2_{r,0}(c)>x_1$. 
In turn, for $r=2$ we have $f_{r,0}^2(c)\approx 2.1654>c=2$ 
(violating \eqref{LemmaDynCoreEq}) and for 
$r=2.98$ $f_{r,0}^2(c)<x_1$ as $f_{r,0}^2(c)\approx 0.0526$ 
and $x_1 \approx 0.0535$.  
\end{remark}

\begin{remark}
Consequently, the interval $[f_{r,k}^2(c),f_{r,k}(c)]$ 
(in the proper parameter regime, i.e., $r\geq r^*(k)$ 
for $k\in (0,2)$ and $r\in [2.1,2.97]$ for $k=0$) can 
be treated as dynamical core of $f_{r,k}$, 
which contains exactly one fixed point, namely $x_f>2$. 
\end{remark}
The next lemma assures that with $r$ large enough 
the fixed point $x_f$ mentioned above is unstable.

\begin{lemma}\label{lem:UnstableFP} For any $0\leq k<2$  
there exists $r^*(k)$ such that for all $r\geq r^*(k)$ 
the map $f_{r,k}$ has an unstable fixed point $x_f>2$. 
\end{lemma}

\begin{corollary}\label{afterLemma3_4}
For every $k\in [0,2)$ there exists an interval $[r_1,r_2]$ 
of positive $r$-values such that for any $r\in [r_1,r_2]$ 
the map $f_{r,k}$ has an attracting fixed point $x_f>2$.
\end{corollary}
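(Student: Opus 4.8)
The plan is to make explicit the branch of fixed points of $f_{r,k}$ lying to the right of the critical point $c=2$ and to show that along this branch stability is lost exactly at the flip-bifurcation value $r_0$ of Theorem~\ref{thm:flip}, which is positive; attractivity for $r$ just below $r_0$ then produces the desired interval.

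First I would record the fixed-point locus. For $x>k$ the point $x$ is a fixed point of $f_{r,k}$ if and only if $\exp(r-x)=(x-k)/x^2$, i.e. if and only if $r=\rho(x):=x+\ln\bigl((x-k)/x^2\bigr)$; since any fixed point exceeding $2$ automatically satisfies $x>2>k$ (recall $k<2$), the map $\rho$ parametrizes all fixed points in $(2,\infty)$. A short computation gives $\rho'(x)=\bigl(x^2-(k+1)x+2k\bigr)/\bigl(x(x-k)\bigr)$, and the numerator is a quadratic with vertex at $(k+1)/2<2$ and value $2>0$ at $x=2$, hence strictly positive on $[2,\infty)$; so $\rho$ is a continuous strictly increasing bijection of $(2,\infty)$ onto $(\rho(2),\infty)$. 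By Theorem~\ref{thm:flip} one has $\rho(x_0)=r_0$, where $x_0=(k+3+\sqrt{k^2-2k+9})/2>2$.

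Next I would compute the multiplier along this branch. If $x_f>2$ is a fixed point, then from $\exp(r-x_f)=(x_f-k)/x_f^2$ we get $\abs{f_{r,k}'(x_f)}=x_f(x_f-2)\exp(r-x_f)=(x_f-2)(x_f-k)/x_f=:g(x_f)$. Now $g'(x)=1-2k/x^2>0$ on $[2,\infty)$ because $\sqrt{2k}<2$ for $k<2$, so $g$ is strictly increasing there; and $g(x_0)=1$, since $x_0$ is by definition the root above $\max\{2,k\}$ of $(x-k)(x-2)/x=1$. Hence every fixed point $x_f\in(2,x_0)$ satisfies $\abs{f_{r,k}'(x_f)}=g(x_f)<1$ and is therefore attracting.

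Finally I would assemble the interval. Since $\rho$ is increasing and $2<x_0$, we have $\rho(2)<\rho(x_0)=r_0$, so for every $r\in(\rho(2),r_0)$ the map $f_{r,k}$ possesses the attracting fixed point $x_f=\rho^{-1}(r)\in(2,x_0)$. Moreover $r_0>0$ by an elementary estimate: from $x_0>2$ and $\exp(x)>x^2$ for $x\ge 2$ one gets $\exp(x_0)>x_0^2>x_0(x_0-2)$, whence $r_0=x_0-\ln\bigl(x_0(x_0-2)\bigr)>0$. Consequently $\bigl(\max\{0,\rho(2)\},r_0\bigr)$ is a nonempty open interval of positive numbers, and any nondegenerate closed subinterval $[r_1,r_2]$ contained in it works. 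The step that carries the argument is the identity $g(x_0)=1$, which pins the stability boundary of the right-hand fixed point precisely to the flip value $r_0$; the two monotonicity computations and the positivity of $r_0$ are routine. (Alternatively one could argue from the supercriticality established in Theorem~\ref{thm:flip} together with $\frac{\partial^2 f}{\partial r\,\partial x}(x_0,r_0)=-1<0$, which already singles out $\{r<r_0\}$ as the stable side of the flip, but the explicit computation above is cleaner and also delivers $r_0>0$.)
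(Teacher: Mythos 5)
Your proposal is correct and follows essentially the same route as the paper, which justifies the corollary by appealing to the proof of Lemma~\ref{LemmaUnstableFP} (monotone dependence of the fixed point $x_f>2$ on $r$ and the multiplier formula $\abs{f'_{r,k}(x_f)}=(x_f-k)(x_f-2)/x_f$, which exceeds $1$ exactly beyond the flip point $x_0$). Your version is simply more explicit and self-contained: by working on the open interval $(2,x_0)$ where the multiplier is strictly below $1$ you avoid invoking the S-unimodal fact that neutral fixed points are attracting, and you also supply the positivity of $r_0$, which the paper leaves implicit.
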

The above corollary can be easily justified taking into account 
the proof of Lemma \ref{lem:UnstableFP} and the fact that for 
a S-unimodal map $f$ every fixed point $x_f$ with 
$\vert f^{\prime}(x_f)\vert \leq 1$ is attracting.

\section{Periodic versus chaotic behaviour}\label{sec:periodic}
The 1D Chialvo map \eqref{eq:Chialvo1DIM}  except for regular 
behaviour due to the existence of stable fixed points or 
periodic orbits can also exhibit chaotic behaviour. 
There are many different notions of chaos 
(see \cite{AulbachKieninger2001}), 
with the most prominent one probably due to Devaney
(\cite{devaney2003}). 
The common point of most of these definitions is some sort 
of sensitive dependence on initial conditions. 
In our approach we will consider both metric chaos associated 
with the existence of the absolutely continuous invariant 
probability measure (\emph{acip}), which, as we recall below, 
for S-unimodal maps can be identified with some strong sensitive
dependence on initial conditions, and topological chaos 
corresponding to the classical definitions of Li and Yorke, 
Block and Coppel, and Devaney.

\subsection{Periodic attractors}\label{subsec:attract}

Since the reduced Chialvo map is S-unimodal and 
its critical point is non-degenerate,
the following two theorems are immediate consequences of
Theorems \ref{thm:singer} and~\ref{thm:attractorgen}.

\begin{theorem}\label{mainChaos1} The 1D Chialvo map
\eqref{eq:Chialvo1DIM} can have at most one periodic attractor 
and this attractor attracts the critical point $c=2$. 
\end{theorem}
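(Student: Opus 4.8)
The plan is to recognise Theorem~\ref{mainChaos1} as a direct corollary of Singer's theorem (our Theorem~\ref{thm:singer}), so that the real work is only to check its hypotheses for the 1D Chialvo map. I would fix $k\in[0,2)$ and $r$ in the nontrivial parameter regime isolated in Section~\ref{dyncore_subsection}; the complementary cases are trivial, since there every orbit is attracted either to the superattracting point $x_0=0$ (when $k=0$) or to the unique stable/semistable fixed point, and the assertion holds automatically. In the nontrivial regime, $f_{r,k}$ restricted to its dynamical core $I_{r,k}$ --- equal to $[f_{r,k}^{2}(c),f_{r,k}(c)]$, or to the interval bounded by the smallest positive fixed point $x_{0}$ and the other point of $f_{r,k}^{-1}(x_{0})$ --- is a unimodal map in the sense of Section~1, with the single turning point $c=2$ interior to $I_{r,k}$ and $f_{r,k}(I_{r,k})=I_{r,k}$.

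Next I would record the two remaining ingredients. The negative-Schwarzian computation already carried out shows $Sf_{r,k}(x)<0$ for every $x$ (with value $-\infty$ at $x=0$ and $x=2$), so $f_{r,k}|_{I_{r,k}}$ is S-unimodal. The critical point is non-degenerate: from
\[
f_{r,k}''(x)=\exp(r-x)\bigl((x-2)^{2}-2\bigr)
\]
one gets $f_{r,k}''(2)=-2\exp(r-2)\neq 0$, so $c=2$ is a nonflat (quadratic) turning point.

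With these in hand Theorem~\ref{thm:singer} applies verbatim to $f_{r,k}|_{I_{r,k}}$: it admits at most one periodic attractor, and any such attractor attracts the critical point $c=2$. Since, by the dynamical-core discussion of Section~\ref{dyncore_subsection}, every initial condition is after finitely many iterates mapped into $I_{r,k}$ --- apart from the resting state $x_{0}=0$ and its preimages when $k=0$, which are attracted to the superattracting point $0$ lying outside $I_{r,k}$ and are not part of the count --- a periodic attractor of the 1D Chialvo map other than the resting state must be contained in $I_{r,k}$, and the statement follows.

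I do not expect a genuine obstacle here: the substance was front-loaded into the Proposition on the Schwarzian derivative and into the case analysis of Section~\ref{dyncore_subsection} that produces the invariant interval $I_{r,k}$. The only delicate point is the bookkeeping across parameter regimes --- confirming that in each case there is an honest dynamical core on which $f_{r,k}$ meets the paper's definition of S-unimodality, and being explicit that ``at most one periodic attractor'' is meant for the nontrivial dynamics on that core, so that the superattracting resting fixed point at $k=0$ (which lies outside the core) does not spoil the count.
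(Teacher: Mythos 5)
Your proposal is correct and takes essentially the same route as the paper: the paper derives this statement in one line as an immediate consequence of Singer's theorem (Theorem~\ref{thm:singer}) applied to the S-unimodal restriction of $f_{r,k}$ to its dynamical core, exactly as you do. The additional bookkeeping you supply --- the non-degeneracy of $c=2$, the case analysis over parameter regimes, and the remark that the superattracting resting state at $k=0$ lies outside the core --- merely makes explicit what the paper leaves implicit.
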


Let $\Gamma$ be a periodic orbit with period $n$
and let $\lambda(\Gamma)=(f^n)^{\prime}(x)$ 
for $x\in \Gamma$. It is clear that
$\vert \lambda(\Gamma)\vert \in [0,1)$ implies 
that $\Gamma$ is attracting. However, 
since $f$ is S-unimodal, 
by Theorem~\ref{thm:singer},
also neutral periodic orbits 
(i.e. such that $\vert \lambda(\Gamma)\vert=1$) 
are attracting.

\begin{theorem}\label{thm:types}
The 1D Chialvo map has a unique metric attractor
$\mathcal{A}$ which is a limit set $\omega(x)$  for
almost all initial conditions $x$.  
This attractor is either an attracting periodic orbit, 
a Cantor set of measure zero or 
a finite union of intervals with a dense orbit 
(so called interval attractor).
\end{theorem}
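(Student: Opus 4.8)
The plan is to verify that, on the appropriate invariant interval, the 1D Chialvo map satisfies the hypotheses of Theorem~\ref{thm:attractorgen}, and then to invoke that theorem directly. So I would fix $k\in[0,2)$ and a parameter value $r$ in a regime (as discussed in Section~\ref{dyncore_subsection}) for which the dynamical core $I_{r,k}$ is a genuine compact invariant interval on which $f_{r,k}$ restricts to a unimodal map in the sense of our definition --- concretely $I_{r,k}=[f_{r,k}^2(c),f_{r,k}(c)]$, or $I_{r,k}=[x_0,y_0]$ with $y_0$ the second preimage of the smallest positive fixed point $x_0$. In the remaining, trivial, parameter ranges $f_{r,k}$ has a globally attracting fixed point and the assertion holds with $\mathcal{A}$ equal to that fixed point, which is case~(1) of the trichotomy.

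Next I would record the two structural facts that feed into Theorem~\ref{thm:attractorgen}. By the Proposition of Section~2 we have $Sf_{r,k}(x)<0$ for every $x$ off the critical point, so $f_{r,k}|_{I_{r,k}}$ is S-unimodal. For the non-flatness of the critical point $c=2$, a one-line computation (the second derivative was already displayed in the proof of Theorem~\ref{thm:fold}) gives
\[
f_{r,k}''(2)=\exp(r-2)\bigl((2-2)^2-2\bigr)=-2\exp(r-2)\neq 0,
\]
so $c=2$ is a non-degenerate (quadratic), hence non-flat, critical point.

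With these two facts in hand, Theorem~\ref{thm:attractorgen} applies verbatim to $f_{r,k}|_{I_{r,k}}$: it has a unique metric attractor $\mathcal{A}$, equal to $\omega(x)$ for Lebesgue-almost every $x\in I_{r,k}$, and $\mathcal{A}$ is an attracting periodic orbit, a measure-zero Cantor set, or a finite union of intervals carrying a dense orbit, with $\mathcal{A}=\omega(c)$ in the first two cases. The trichotomy and the uniqueness/almost-everywhere statements of Theorem~\ref{thm:types} are then precisely the corresponding statements of Theorem~\ref{thm:attractorgen}, while the fact that at most one periodic attractor can occur and that it attracts $c$ is Theorem~\ref{mainChaos1}. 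There is no genuine obstacle here beyond bookkeeping; the only point that needs a little care is confirming that $f_{r,k}$ is literally a unimodal self-map of the interval $I_{r,k}$, which is exactly what the case analysis of Section~\ref{dyncore_subsection} supplies.
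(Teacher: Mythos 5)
Your proposal is correct and follows exactly the route the paper takes: the paper derives Theorem~\ref{thm:types} as an immediate consequence of Theorem~\ref{thm:attractorgen}, citing S-unimodality and non-degeneracy of the critical point, which are precisely the two hypotheses you verify (your computation $f_{r,k}''(2)=-2\exp(r-2)\neq 0$ is right). You simply make explicit the bookkeeping --- restriction to the dynamical core from Section~\ref{dyncore_subsection} and the trivial parameter ranges --- that the paper leaves implicit.
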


We say that two unimodal maps $f$ and $g$ are 
\emph{combinatorially equivalent} 
if there exists an order-preserving bijection 
$h\colon \cup_{n\ge0}f^n(c)\to\cup_{n\ge0}\,g^n(c')$
such that $h(f^n(c)) = g^n(c')$ for all $n\ge0$, where $c$ 
and $c'$ are critical points of
$f$ and $g$. In the other words, $f$ and $g$ are 
combinatorially equivalent if 
the order of their forward critical orbit is the same. 

Let us present the following general
mathematical observation, which might be known to specialists 
in one-dimensional dynamics, but probably lacks clear 
references as follows, for example, from Remark~\ref{rem:schreiber} below. 
\begin{lemma}\label{lem:comb}
Assume that $f$ and $g$ are S-unimodal maps. If $f$ has 
a period two attracting orbit and  
$g$ has a period three attracting orbit 
then $f$ and $g$ 
are not combinatorially equivalent.
\end{lemma}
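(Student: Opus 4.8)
The plan is to compare the itineraries of the two critical points and to show that having an attracting $2$-cycle versus an attracting $3$-cycle forces incompatible combinatorics. First I would observe the easy half: combinatorial equivalence already makes the critical itineraries coincide. If $h\colon\bigcup_{n\ge0}f^n(c)\to\bigcup_{n\ge0}g^n(c')$ is order preserving with $h(f^n(c))=g^n(c')$ for all $n\ge0$, then $h(c)=c'$ (take $n=0$), and order preservation gives, for every $j$, $f^j(c)<c\iff g^j(c')<c'$, $f^j(c)>c\iff g^j(c')>c'$, and $f^j(c)=c\iff g^j(c')=c'$. Hence the itinerary $s(c)$ of $c$ under $f$ agrees symbol by symbol with the itinerary $s(c')$ of $c'$ under $g$, and it is enough to show $s(c)\ne s(c')$.

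Next I would read off the shape of each itinerary from the attracting orbit. Since $f$ is S-unimodal with an attracting orbit $\Gamma_f$ of period $2$, Theorems~\ref{thm:singer} and~\ref{thm:attractorgen} give that $\Gamma_f$ is the unique metric attractor and $\omega(c)=\Gamma_f$; hence $\operatorname{dist}(f^n(c),\Gamma_f)\to0$ and the orbit of $c$ enters small neighbourhoods of the two points of $\Gamma_f$ alternately. If neither of those points equals $c$, the comparison symbols stabilise and $s(c)$ is eventually periodic with tail period dividing $2$; if $c$ itself is $2$-periodic, then $s(c)=\overline{C1}$, again of period $2$. By the same argument the attracting $3$-cycle $\Gamma_g$ gives $\omega(c')=\Gamma_g$ and makes $s(c')$ eventually periodic with tail period dividing $3$.

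Then I would exclude the only way these can agree. If $s(c)=s(c')$, their common tail period divides $\gcd(2,3)=1$, so both itineraries are eventually constant; the constant symbol cannot be $C$, since $g^N(c')=c'$ for some $N$ would force $g(c')=g^{N+1}(c')=c'$, i.e.\ $c'$ a fixed point, contradicting that $\omega(c')=\Gamma_g$ is a $3$-cycle. So the orbit of $c'$ eventually lies strictly on one side of $c'$, and therefore $\Gamma_g$ is contained in the closed increasing branch $[a',c']$ of $g$ or in the closed decreasing branch $[c',b']$. In the first case $g$ permutes the finite set $\Gamma_g$ by an order-preserving bijection, which must be the identity; in the second case $g$ reverses order on $\Gamma_g$, so $g^2$ permutes $\Gamma_g$ by the identity and every point of $\Gamma_g$ has $g$-period at most $2$. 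Either conclusion contradicts $\Gamma_g$ being a single $3$-cycle, so $s(c)\ne s(c')$ and $f$, $g$ are not combinatorially equivalent.

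The step that needs the cited machinery is the second one: everything rests on $\omega(c)$ (and $\omega(c')$) being \emph{exactly} the attracting periodic orbit rather than merely containing it, which is what Theorems~\ref{thm:singer} and~\ref{thm:attractorgen} supply and what lets the itinerary settle into a period dividing the period of the cycle. The remaining ingredients — the super-attracting sub-case $c\in\Gamma_f$ (resp.\ $c'\in\Gamma_g$), and the short monotone-branch argument — are elementary.
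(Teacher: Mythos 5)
Your proof is correct, and it rests on the same key input as the paper's --- Theorems \ref{thm:singer} and \ref{thm:attractorgen}, which give $\omega(c)=\Gamma_f$ and $\omega(c')=\Gamma_g$ --- but the combinatorial invariant you extract from this is different. The paper argues directly with the \emph{order type} of the forward critical orbit: once the orbit shadows the attracting cycle, its order structure replicates that of the cycle, and the order patterns of a $2$-cycle and a $3$-cycle differ; that is essentially the whole (two-line) proof. You instead pass to \emph{itineraries}: combinatorial equivalence forces $s(c)=s(c')$ because the conjugating bijection is order-preserving and sends $c$ to $c'$; an attracting $p$-cycle forces the itinerary tail to have period dividing $p$; and $\gcd(2,3)=1$ leaves only an eventually constant tail, which you then exclude for the $3$-cycle by the monotone-branch argument (an order-preserving or order-reversing permutation of a finite set cannot act as a $3$-cycle). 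Your route needs that extra final step, which the paper's order-type comparison avoids, since the order pattern of points shadowing a $3$-cycle is visibly of period $3$ and cannot collapse to something constant; on the other hand, your version is fully spelled out, including the superattracting case $s(c)=\overline{C1}$ and the ruling out of a constant symbol $C$, where the paper's ``the orders are obviously different'' is left to the reader. Both arguments are valid; yours is the more careful write-up of the same underlying mechanism.
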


\begin{proof}
Let $\mathcal{A}$ denote a periodic attractor and 
$c$ a critical point. Since $\omega(c)=\mathcal{A}$,
starting from some iteration the order of the critical point
orbit is the same as the order on the periodic attractor.
Obviously, the orders of a two and a three periodic orbit
are different, which is our claim.
\end{proof}

\begin{remark}\label{rem:schreiber}
In \cite{schreiber2007} it is stated that a map with 
an attracting equilibrium and
a map with a period two attracting orbit are 
not combinatorially equivalent,
which in general is not true. 
\end{remark}

Let us also formulate a conjecture which generalizes Lemma~\ref{lem:comb}, and which we believe is true.

\begin{conj}
Let $f$ and $g$ be S-unimodal maps, $m>k$ and $m\neq2k$.
If $f$ has an attracting periodic orbit with 
a prime period $m$ and $g$ has an attracting periodic orbit 
with a prime period $k$, then $f$ and $g$ 
are not combinatorially equivalent.
\end{conj}

The following useful result can be found 
in \cite[Th. C]{kozlovski2003}. 

\begin{theorem}[Kozlovski]\label{thm:koz}
Let $\Lambda\subset\mathbb{R}^k$ be open and 
$\{f_\lambda\}_{\lambda\in\Lambda}$ 
be an analytic family of unimodal maps with a
non-degenerate critical point and negative Schwarzian derivative. 
If there exist two maps in $\{f_\lambda\}$
that are not combinatorially equivalent, 
then there exists a dense subset 
$\Theta$ of $\Lambda$
such that  $f_\lambda$ has a stable attracting  
periodic orbit for every $\lambda\in\Theta$.
\end{theorem}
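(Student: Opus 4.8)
The plan is to reproduce, at the level of a sketch, Kozlovski's argument for the density of hyperbolicity, specialised to an analytic family; we only indicate the structure. Write $\Theta$ for the set of $\lambda\in\Lambda$ for which $f_\lambda$ has an attracting periodic orbit. First I would check that $\Theta$ is open: if $f_{\lambda_0}$ has a strictly attracting periodic orbit $\Gamma$ of period $n$ (so that $|(f_{\lambda_0}^n)'|<1$ along $\Gamma$), then the implicit function theorem together with continuity in $\lambda$ yields a nearby attracting periodic orbit for all $\lambda$ close to $\lambda_0$; the only other possibility is that $\Gamma$ is neutral, but for S-unimodal maps such an orbit is already attracting by Theorem~\ref{thm:singer}, and in any case we only ever need one nearby hyperbolic parameter, so treating strictly attracting orbits suffices. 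Hence the whole content of the theorem is the density of $\Theta$.

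Next I would argue by contradiction. Assume $\Theta$ is not dense, so there is a nonempty open $U\subset\Lambda$ with $U\cap\Theta=\emptyset$; then for every $\lambda\in U$ the map $f_\lambda$ has neither an attracting nor (being S-unimodal) a neutral periodic orbit, and in particular its critical point $c$ is not periodic. Consider the kneading invariant $\lambda\mapsto K(f_\lambda)$. The key claim is that $K$ is non-constant on every ball $B\subset U$: if it were constant on some such $B$ with value $\kappa$, then $\kappa$ would be the kneading sequence of a non-hyperbolic analytic unimodal map with non-degenerate critical point and negative Schwarzian derivative, and the rigidity theory for such maps --- the real bounds of Martens--de~Melo--van~Strien, complex bounds, the absence of measurable invariant line fields, and quasiconformal rigidity (Kozlovski; Kozlovski--Shen--van~Strien) --- would force all of the combinatorially equivalent maps $f_\lambda$, $\lambda\in B$, to be mutually topologically conjugate, which is incompatible with the family being genuinely non-trivial (it has two members that are not combinatorially equivalent); more precisely, a fixed non-hyperbolic combinatorial class meets a non-trivial analytic family only in a nowhere dense set.

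Finally, since $K$ is non-constant on $B$, it takes at least two distinct values there; using that topological entropy is a monotone functional of the kneading invariant and depends continuously on the parameter in an analytic family, I would argue that the kneading sequences realised on $B$ cannot jump over the $*$-periodic ones --- those for which $c$ is periodic, which are dense among admissible kneading sequences and occupy exactly the ``gaps'' between neighbouring non-$*$-periodic combinatorics in the Milnor--Thurston order. Hence there is $\lambda_*\in B\subset U$ for which $c$ is periodic under $f_{\lambda_*}$; but a periodic critical point of a unimodal map is superattracting, so $\lambda_*\in\Theta$, contradicting $U\cap\Theta=\emptyset$. This contradiction proves that $\Theta$ is dense.

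The step I expect to be the main obstacle is the rigidity claim invoked in the second paragraph, namely that a fixed non-hyperbolic combinatorial class cannot be realised on an open subset of a non-trivial analytic family; this is precisely where the real and complex bounds and the quasiconformal rigidity machinery of one-dimensional dynamics are needed. Everything else --- the openness of $\Theta$, the continuity of entropy in analytic families, the combinatorial density of $*$-periodic kneading sequences, and the superattractivity of a periodic critical point --- is comparatively routine.
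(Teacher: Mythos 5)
This statement is not proved in the paper at all: it is quoted verbatim as Theorem~C of Kozlovski's paper \emph{Axiom A maps are dense in the space of unimodal maps in the $C^k$ topology} and used as a black box in the proof of Theorem~\ref{thm:apo}. So there is no in-paper argument to compare yours against; the only fair comparison is with Kozlovski's actual proof, of which your text is a structurally reasonable but incomplete outline.

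Your sketch does identify the right architecture (openness of the hyperbolic parameter set, contradiction on an open set $U$ free of attracting orbits, non-constancy of the kneading invariant, and an intermediate-value argument producing a superattracting parameter), but it has genuine gaps beyond the one you flag. First, the step from the hypothesis ``the family contains two maps that are not combinatorially equivalent'' to ``$K$ is non-constant on every ball $B\subset U$'' does not follow as you present it: non-triviality is a global property of the family, and topological conjugacy of all $f_\lambda$, $\lambda\in B$, is perfectly compatible with the existence of two inequivalent maps elsewhere in $\Lambda$. What is actually needed is the local statement that a fixed non-hyperbolic combinatorial class meets a non-trivial analytic family in a nowhere dense set; you state this as a ``more precise'' afterthought, but it is the entire content of the theorem (this is where quasisymmetric/quasiconformal rigidity, real and complex bounds, and the absence of invariant line fields enter), and without it the contradiction argument does not close. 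Second, your mechanism for showing that the kneading data cannot ``jump over'' the $*$-periodic combinatorics is the wrong tool: topological entropy is constant (equal to zero) across the entire period-doubling cascade while the kneading sequence changes infinitely often, so continuity and monotonicity of entropy cannot detect the intermediate kneadings you need. The correct instrument is the Milnor--Thurston intermediate value theorem for kneading invariants along a continuous path of unimodal maps, which directly yields a parameter with periodic (hence superattracting) critical point between two parameters with distinct admissible kneading sequences. With those two repairs your outline matches the known proof, but as written it is a roadmap that defers, rather than supplies, the decisive step.
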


The next result describes the subset in parameter space 
corresponding to periodic attractors.

\begin{theorem}\label{thm:apo}
The set of parameters for which the 1D Chialvo map $f_{r,k}$ 
has a stable attracting  
periodic orbit forms a dense subset of
\begin{enumerate}
	\item the $r$ parameter space $\mathbb{R}$ when $k\in[0,2)$ is fixed,
	\item the $(r,k)$ parameter space $\mathbb{R}\times[0,2)$.
\end{enumerate}
\end{theorem}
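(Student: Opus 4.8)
The plan is to obtain both parts from Kozlovski's Theorem~\ref{thm:koz}, using Lemma~\ref{lem:comb} to verify its combinatorial hypothesis, and then to glue in the ``trivial'' parameter regimes by hand.

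Fix the parameter region: the line $\mathbb{R}$ with $k\in[0,2)$ held fixed in case~(1), or $\mathbb{R}\times[0,2)$ in case~(2). Let $\Lambda$ be the set of parameters at which $[f_{r,k}^2(c),f_{r,k}(c)]$ is a nondegenerate interval with $c=2$ in its interior that is mapped into itself by $f_{r,k}$; this is (up to a codimension-one boundary) an \emph{open} set, and on it $f_{r,k}\vert_{[f_{r,k}^2(c),f_{r,k}(c)]}$ is a genuine S-unimodal self-map with non-degenerate critical point --- non-degeneracy of $c$ is automatic since $\frac{\partial^2 f}{\partial x^2}(2,r)=-2\ue^{r-2}\neq0$, and negative Schwarzian derivative comes from the Proposition establishing $Sf_{r,k}<0$ and is preserved by the affine rescaling of $[f_{r,k}^2(c),f_{r,k}(c)]$ onto a fixed reference interval (an affine conjugacy only multiplies $S$ by a positive constant). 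Since $f_{r,k}(x)=x^2\ue^{r-x}+k$ and the endpoints $f_{r,k}^2(c),f_{r,k}(c)$ are entire in $(r,k)$, the rescaled maps form an \emph{analytic} family of S-unimodal maps with non-degenerate critical point over $\Lambda$, so all hypotheses of Theorem~\ref{thm:koz} are met except the existence of two combinatorially inequivalent members. Moreover, by the dynamical-core analysis of Section~\ref{dyncore_subsection}, every parameter outside $\overline{\Lambda}$ lies in one of the trivial regimes described there --- $f_{r,k}$ has a unique globally attracting fixed point, or its core reduces to a monotone branch carrying an attracting fixed point or an attracting $2$-cycle, or the critical orbit is swallowed by the rest state (the fixed point $x_0$, equal to $0$ when $k=0$) --- and in each such regime $f_{r,k}$ possesses an attracting periodic orbit.

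To supply two combinatorially inequivalent members I would use Lemma~\ref{lem:comb}: it suffices to exhibit one parameter in $\Lambda$ at which $f_{r,k}$ has an attracting period-two orbit and one at which it has an attracting period-three orbit (at each such parameter the fixed point $x_f>2$ is repelling, so both lie in $\Lambda$). The period-two attractor is produced by Theorem~\ref{thm:flip}: immediately past the supercritical flip value a stable $2$-cycle branches off $x_f$ inside the core (equivalently, take a parameter in the period-doubling cascade emanating from the attracting fixed point of Corollary~\ref{afterLemma3_4}). A period-three attractor sits in a period-three window of the family, clearly present in the bifurcation diagrams of Figure~\ref{fig:bif_diagram}; its existence at a concrete parameter can be confirmed by a direct (numerically assisted) check of an attracting $3$-cycle for an explicit $(r,k)$ with $k=0$, or deduced from the monotonicity of kneading sequences along an arc joining an attracting-fixed-point parameter to a full-unimodal parameter (for $k=0$, one where $f_{r,0}^2(c)$ hits the middle fixed point, which by Remark~\ref{Remark32} occurs near $r\approx2.97$). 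Granting the period-three attractor, Theorem~\ref{thm:koz} yields a subset $\Theta$ of $\Lambda$, dense in $\Lambda$, on which $f_{r,k}$ has a stable attracting periodic orbit.

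Finally, write $\mathcal{P}$ for the set of parameters at which $f_{r,k}$ has a stable attracting periodic orbit. Then $\mathcal{P}$ contains $\Theta$ together with every trivial parameter, so $\overline{\mathcal{P}}\supseteq\overline{\Theta}\cup(\text{trivial parameters})=\overline{\Lambda}\cup(\text{trivial parameters})$, which is the whole parameter region because every parameter not in $\overline{\Lambda}$ is trivial by Section~\ref{dyncore_subsection}. Hence $\mathcal{P}$ is dense, which is part~(1) for each fixed $k\in[0,2)$; running the identical argument with $\Lambda$ taken as an open subset of $\mathbb{R}\times[0,2)$ gives part~(2). I expect the main obstacle to be the first step: assembling the parameter-dependent dynamical core --- whose very form varies between the regimes catalogued in Section~\ref{dyncore_subsection} --- into a single analytic unimodal self-map family over an open set $\Lambda$, and checking that the parameters outside $\overline{\Lambda}$ really are trivial; for a fully self-contained argument one would also need to pin down the period-three witness rather than read it off a numerical bifurcation diagram.
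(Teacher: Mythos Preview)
Your approach is essentially the paper's: apply Kozlovski's Theorem~\ref{thm:koz} after exhibiting two combinatorially inequivalent members via Lemma~\ref{lem:comb}, then deduce part~(2) from part~(1). Two remarks are worth making. First, the paper sidesteps the period-three issue you flag as an obstacle: as an alternative to Lemma~\ref{lem:comb}, it observes (in the proof of Proposition~\ref{cor:pori}) that for each fixed $k\in[0,2)$ one can choose $r$ so that $f_{r,k}(c)=c$, giving kneading sequence $(CC\dotsc)$, and another $r$ with kneading sequence $(C1\dotsc)$; these are manifestly distinct, so no period-three window is needed. Second, your treatment of the complement of $\overline{\Lambda}$ is actually more careful than the paper's: the paper simply asserts that ``all other assumptions of Theorem~\ref{thm:koz} are obviously satisfied'' and concludes density in the full line, whereas you correctly note that Kozlovski only yields density inside the open set on which the analytic unimodal family is set up, and that the remaining parameters must be handled separately via the trivial-dynamics regimes of Section~\ref{dyncore_subsection}.
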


\begin{proof}
Let $k\in[0,2)$ be fixed.
Clearly, for some $r$ the map $f_{r,k}$ has
a period two attracting orbit and
for some other $r$ it has a period three one.
Consequently, Lemma~\ref{lem:comb} guarantees that the family
$\{f_{r,k}\}$ contains two maps that are not combinatorially
equivalent. Alternatively, to obtain two such maps we
can use the kneading sequences from the proof of
Corollary~\ref{cor:pori}.
Since all other assumptions of Theorem~\ref{thm:koz} are obviously
satisfied, the assertion of the first statement of 
our theorem follows the Kozlovski Theorem. 
Finally, the second statement of our theorem 
follows from the first one.
\end{proof}

\subsection{Metric chaos}\label{subsec:metric}

The following theorem guarantees that 1D Chialvo maps are
strongly chaotic with positive frequency in parameter
space. As we have mentioned above strong chaos here
is associated with the existence of acip measure.
Below we present the rigorous proof in the case $k=0$
and conclusions of numerical analysis for the case $k\in(0,0.58)$.
Here and subsequently, $\delta_x$ denotes the Dirac measure at 
the point $x$ and $\supp(\mu)$ the support of a measure $\mu$.
Moreover, recall that a map $f$ is called 
a \emph{Misiurewicz map} if it has no periodic attractors 
and if critical orbits do not accumulate on critical points,
i.e. when
\begin{displaymath}
C_f \cap \omega(C_f) = \emptyset,
\end{displaymath}
where $C_f$ denotes the set of critical points of $f$.
For a parametrized family of maps $\{f_r\}$
the values of parameters for which $f_r$
are Misiurewicz maps are called \emph{Misiurewicz parameters}.
An example of such a
parameter in the logistic family
$Q(x)=\mu x(1-x)$ is $\mu=4$.

For fixed $k$, let $f_r(x):=x^2\exp{(r-x)}+k$ and let 
$I_r\subset [0,\infty)$ denote the dynamical core of $f_r$
(see Section~\ref{sec:dyncore} for details). 

\begin{theorem}\label{thm:ai} Let $k=0$ be fixed. 
There exists a positive Lebesgue measure set 
$\mathcal{R}$ such that for all $r\in\mathcal{R}$: 
\begin{itemize}
\item the map $f_r: I_r\to I_r$ admits an absolutely 
continuous invariant probability measure $\mu_r$ such that 
\begin{equation}\label{Chaos2_1}
\mu_r=\lim_{n\to\infty}\frac{1}{n}
\sum_{i=0}^{n-1}\delta_{f_r^i(x)}
\quad\text{for a.a.\ $x\in I_r$},
\end{equation} 
i.e., $\mu_r$ describes the asymptotic distribution 
of almost all orbits under $f_r$,
\item the unique metric attractor of the map $f_r$ is 
an interval attractor $\mathcal{A}_r$;
moreover, $\mathcal{A}_r=\supp(\mu_r)$ and $\mu_r$  
is equivalent to the Lebesgue measure on $\mathcal{A}_r$.
\end{itemize}
In consequence, for each $r\in \mathcal{R}$ 
the Lyapunov exponent of $f_r$ is positive, i.e.,
\begin{equation}\label{Chaos2_2}
\lim_{n\to\infty}\frac{1}{n}\log \vert \mathrm{D}f^n_r(x)\vert=\kappa_r>0 
\quad\text{for a.a.\ $x\in I_r$},
\end{equation}
where $\mathrm{D}$ denotes the derivative with respect to $x$.
\end{theorem}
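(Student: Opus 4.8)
**The plan is to invoke the theory of one-parameter families of S-unimodal maps à la Jakobson--Benedicks--Carleson, packaged for our purposes via the existing literature (e.g.\ de Melo--van Strien, or the Misiurewicz-parameter approach of Thunberg or Tsujii).** The overall strategy has three stages: (1) construct a \emph{Misiurewicz parameter} $r^\ast$ for the family $f_r=f_{r,0}$, i.e.\ a value of $r$ at which the critical orbit is preperiodic (or at least bounded away from the critical point with the fixed point $x_f>2$ as its only accumulation point); (2) verify the transversality/nondegeneracy condition under which Misiurewicz parameters are Lebesgue density points of ``good'' (acip-carrying) parameters; (3) transport all the qualitative conclusions --- positive Lyapunov exponent, the exponential mixing/Birkhoff description in \eqref{Chaos2_1}, and the identification of the metric attractor with $\supp(\mu_r)$ --- from the general S-unimodal theory, which applies because of the Proposition on negative Schwarzian derivative together with Theorems~\ref{thm:attractorgen} and the preceding structural lemmas.

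For stage (1), recall that when $k=0$ the relevant dynamical core is $I_r=[f_r^2(2),f_r(2)]$ with a single unstable fixed point $x_f>2$ inside it (Lemma~\ref{LemmaDynCore}, Remark~\ref{Remark32}, and Lemma~\ref{LemmaUnstableFP} applied for $r$ in the range $[2.1,2.97]$). A clean choice is to locate $r^\ast$ where the critical value $f_{r^\ast}(2)=4e^{r^\ast-2}$ lands \emph{exactly} on the orientation-reversing fixed point $x_f$, so that $c=2\mapsto x_f$ and the critical orbit is eventually fixed; this is a single transcendental equation in $r^\ast$ whose solvability in $(2.1,2.97)$ follows from an intermediate-value argument (at $r=2.1$ the critical value overshoots/undershoots $x_f$ in one direction, at $r$ near $2.97$ in the other, using monotonicity of both $f_r(2)$ and $x_f$ in $r$). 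At such $r^\ast$ the map $f_{r^\ast}$ is a Misiurewicz map: it has no periodic attractor (by Theorem~\ref{thm:singer}, since the critical point would have to be attracted to it, but here $\omega(2)=\{x_f\}$ is repelling) and $C_f\cap\omega(C_f)=\{2\}\cap\{x_f\}=\emptyset$.

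For stage (2), I invoke the theorem (Benedicks--Carleson for the quadratic family, extended to general analytic/$C^3$ negative-Schwarzian unimodal families by Thunberg, Tsujii, and later work summarised in de Melo--van Strien) that says: for a nondegenerate analytic one-parameter family of S-unimodal maps with nonflat critical point, every Misiurewicz parameter is a Lebesgue density point of the set of parameters for which $f_r$ admits an acip whose support is a finite union of intervals carrying a dense orbit, with $\mu_r$ equivalent to Lebesgue there and satisfying the Birkhoff ergodic identity \eqref{Chaos2_1}; moreover the Lyapunov exponent \eqref{Chaos2_2} is positive for these parameters (Collet--Eckmann condition). In particular the set $\mathcal R$ of such $r$ has positive Lebesgue measure. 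The family $r\mapsto f_{r,0}$ is real-analytic, the critical point $c=2$ is quadratic (nonflat) since $f_{r,0}''(2)=-2e^{r-2}\neq0$, and the \emph{transversality} condition --- that the critical value $f_r(2)$ moves relative to the continuation of the accumulation point of the critical orbit --- reduces to checking $\partial_r\big(f_r(2)-x_f(r)\big)\neq0$ at $r^\ast$, which is a one-line computation ($\partial_r f_r(2)=f_r(2)>0$ while $x_f(r)$ has bounded derivative, or more robustly one differentiates the two defining relations). The identification of the metric attractor as an interval attractor equal to $\supp(\mu_r)$ then comes from Theorem~\ref{thm:attractorgen}: for these $r$ the attractor cannot be a periodic orbit or a measure-zero Cantor set (an acip equivalent to Lebesgue on a positive-measure set rules both out), so it is case (3), and uniqueness of the acip forces $\mathcal A_r=\supp(\mu_r)$.

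\textbf{The main obstacle is stage (2): pinning down precisely which black-box theorem on positive-measure sets of acip parameters applies to our specific analytic family and verifying its hypotheses honestly.} The genuinely family-specific work is (a) exhibiting the Misiurewicz parameter inside the validated range $[2.1,2.97]$ where all the dynamical-core bookkeeping holds, and (b) the transversality check $\partial_r(f_r(2)-x_f(r))\neq0$; everything downstream --- ergodicity, the Birkhoff limit, equivalence to Lebesgue, positivity of the Lyapunov exponent, the trichotomy for the attractor --- is then a direct citation to the S-unimodal machinery, legitimate because the Proposition already established $Sf_{r,0}<0$ globally and the critical point is nondegenerate. One should also note that \eqref{Chaos2_1}--\eqref{Chaos2_2} hold for a.a.\ $x$ in the \emph{core} $I_r$, not in all of $[0,\infty)$; points in $(0,x_1)$ are attracted to the superattracting fixed point $0$, but this set has been excised by restricting to $I_r$, so the statement as phrased (``for a.a.\ $x\in I_r$'') is exactly what the general theory delivers.
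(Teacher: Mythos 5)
Your overall architecture is exactly the paper's: produce a Misiurewicz parameter $r^\ast$ for the family $\{f_{r,0}\}$, verify a transversality condition at $r^\ast$, and then cite the Thunberg/Benedicks--Carleson-type theorem that Misiurewicz parameters of nondegenerate S-unimodal families are density points of acip parameters, with the attractor trichotomy (Theorem~\ref{thm:attractorgen}) identifying the attractor as an interval attractor equal to $\supp(\mu_r)$. The downstream part of your argument is fine. However, your concrete construction in stage (1) cannot work.

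You propose to choose $r^\ast$ so that the critical \emph{value} $f_{r^\ast}(2)=4e^{r^\ast-2}$ lands exactly on the fixed point $x_f>2$, i.e.\ the critical point reaches the fixed point in one step. This is impossible: $f_r(2)$ is the global maximum of $f_r$ on $[0,\infty)$, so for any fixed point $x_f\neq 2$ one has $x_f=f_r(x_f)<f_r(2)$ strictly (equality would force $x_f=c=2$, contradicting that $x_f$ lies on the decreasing branch). Hence the equation $f_{r^\ast}(2)=x_f(r^\ast)$ has no solution in the relevant regime, and your intermediate-value argument has nothing to bite on --- the quantity $f_r(2)-x_f(r)$ is positive for \emph{all} admissible $r$ and never changes sign. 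The paper circumvents this by sending the critical point onto the unstable fixed point after \emph{three} iterations: it studies $d(r)=f_r^3(2)-x_f(r)$, exhibits a sign change on $(2.43,2.44)$, and obtains $r^\ast\approx 2.436$ by the Intermediate Value Theorem. Your transversality check must be adapted accordingly: since the critical value now needs two further iterates to reach the fixed point, the relevant quantity is $\frac{d}{dr}\bigl(\zeta(r)-f_r(2)\bigr)\big\vert_{r=r^\ast}$ where $\zeta(r)$ is the continuation of $f_{r^\ast}(2)$ satisfying $f_r^2(\zeta(r))=x_f(r)$, not simply $\partial_r\bigl(f_r(2)-x_f(r)\bigr)$; this involves inverting two derivatives of $f_r$ along the orbit (the paper computes it to be $\approx -3.851\neq 0$). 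So the gap is localized --- replace the one-step landing by a multi-step landing and redo the transversality computation for the correct $\zeta$ --- but as written the key construction fails.
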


\begin{proof}
The statements of the above theorem follow from some nowadays
classical results on S-unimodal maps, for example, 
from Theorem 18 and Corollary 19 in \cite{thunberg2001}, 
provided that the below-listed conditions C1--C6 are satisfied 
for some interval of parameters $r$: 
\begin{itemize}
  \item[C1.] $\{f_r\}$ is a one-parameter family of $C^2$ unimodal
  maps of an interval $I$ (in general the interval $I$ can vary 
  with the change of the parameter $r$ , i.e. $I=I_{r}$), 
  \item[C2.]  each $f_r$ has a nondegenerate critical point $c$,
  \item[C3.]  the map 
  $(x,r) \mapsto (f_r(x),D_xf_r(x),D_{xx}f_r(x))$ is $C^1$, 
  \item[C4.] each $f_r$ has a repelling fixed point on 
  the boundary of $I$, 
  \item[C5.]  there is a parameter value $r^*$ such that
  the critical orbit is mapped onto an unstable periodic cycle 
  $P^*$ in a finite number of steps (this implies
  that $f_{r^*}$ is a Misiurewicz map),
  \item[C6.]  the map moves with the parameter at $r^*$ in 
  the following technical sense
  \[\frac{d}{dr}(x(r)-f_r(c))\big\vert_{r=r^*}\neq 0,\]
  where $x(r)$ is a point of $I$ and $P(r)$ is an unstable
  periodic orbit such that
  \begin{itemize}
      \item $r\mapsto x(r)$ is differentiable,
      \item $x(r^*)=f_{r^*}(c)$,
      \item $P(r)$ moves continuously with $r$,
      \item $x(r)$ is mapped onto $P(r)$ in the same
      number of steps as $x(r^*)$ and onto the corresponding point
      of $P(r)$
  \end{itemize}
  (existence of such a point and orbit is guaranteed by the
  perturbation theory).
\end{itemize}
We are left with the task of checking all these conditions.
We will see that in fact only C5--C6 might pose a problem.
\vspace{1mm}
  
\noindent \textit{Ad} C1--C3. It is easy to see that in 
our situation the first three conditions do not require any
justification.\vspace{1mm}
  
\noindent \textit{Ad} C4. The condition C4 can also be formally
achieved by properly extending the map $f_r$ beyond its 
dynamical core $[f_r^2(c),f_r(c)]$ which does not effect 
its dynamics as all the points in $I$ will after a few iterates 
be mapped into the dynamical core. Recall that by Lemma
\ref{lem:DynCore} for each $k=0$ and $r=r(k)$ large enough 
we can set $[f_r^2(c),f_r(c)]$ as a dynamical core such that 
there are no fixed points in $[f_r^2(c),f_r(c)]$ except for 
$x_f$ located on the decreasing branch of $f$, i.e.,
$x_f>2$.\vspace{1mm}
  
\noindent \textit{Ad} C5. We will show that there exists 
a map $f_{r^*}$ such that the critical point $c$ is mapped in 
a few steps onto the unstable fixed point. Such a map is necessarily 
a Misiurewicz map. Let us only remind that Misiurewicz maps 
can be found in a number of ways and our idea of the proof 
is chosen for simplicity. We will call $r^*$ 
the \emph{Misiurewicz parameter}.

Let us firstly justify that the fixed point $x_f>2$ is unstable 
for the $r$-parameter values used in further estimation of 
the Misiurewicz parameter. 
Let $x_f$ be a fixed point of $f(x)=x^2 \exp{(r-x)}$. 
Then for $x_f \neq 0$ we have  $x_f\exp(-x_f)=\exp(-r)$.
The right hand side of this equality decreases with $r$ and 
similarly the left one is a decreasing function of $x_f$ 
for $x_f>1$ (we are interested in $x_f>2$). 
Therefore the value of the fixed point $x_f>2$ 
also increases as $r$ increases, i.e. with the increase of $r$ 
the fixed point $x_f>2$ moves further to the right. 
Since $f'(x_f)=x_f\exp({r-x_f})(2-x_f)=2-x_f$, $x_f$ 
is unstable whenever $x_f>3$. In our below estimation of 
the Misiurewicz parameter we consider $r\in (2.43,2.44)$. 
As for $r\geq 2.43$ we certainly have $x_f>3$, $x_f$ is 
unstable in this parameter range 
(taking into account the above arguments and 
our previous estimation of flip bifurcation $r$ value, 
$x_f$ remains unstable fixed point for all $r>r_0\approx 1.901$).

\begin{figure}[!htb]
\centering{
  \includegraphics[scale=0.42,trim= 50mm 8mm 25mm 10mm]{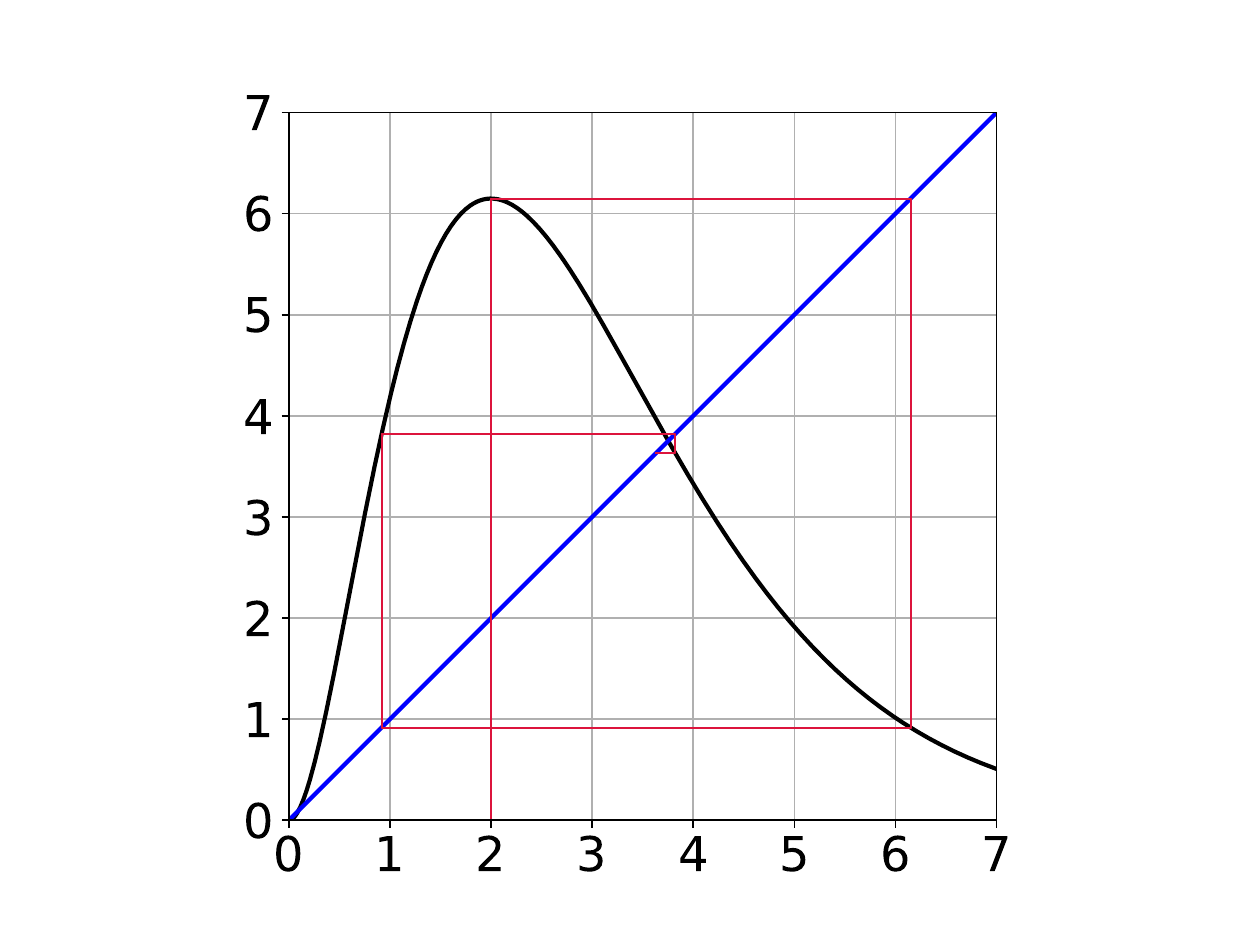}
  \includegraphics[scale=0.42,trim= 0mm 8mm 45mm 10mm]{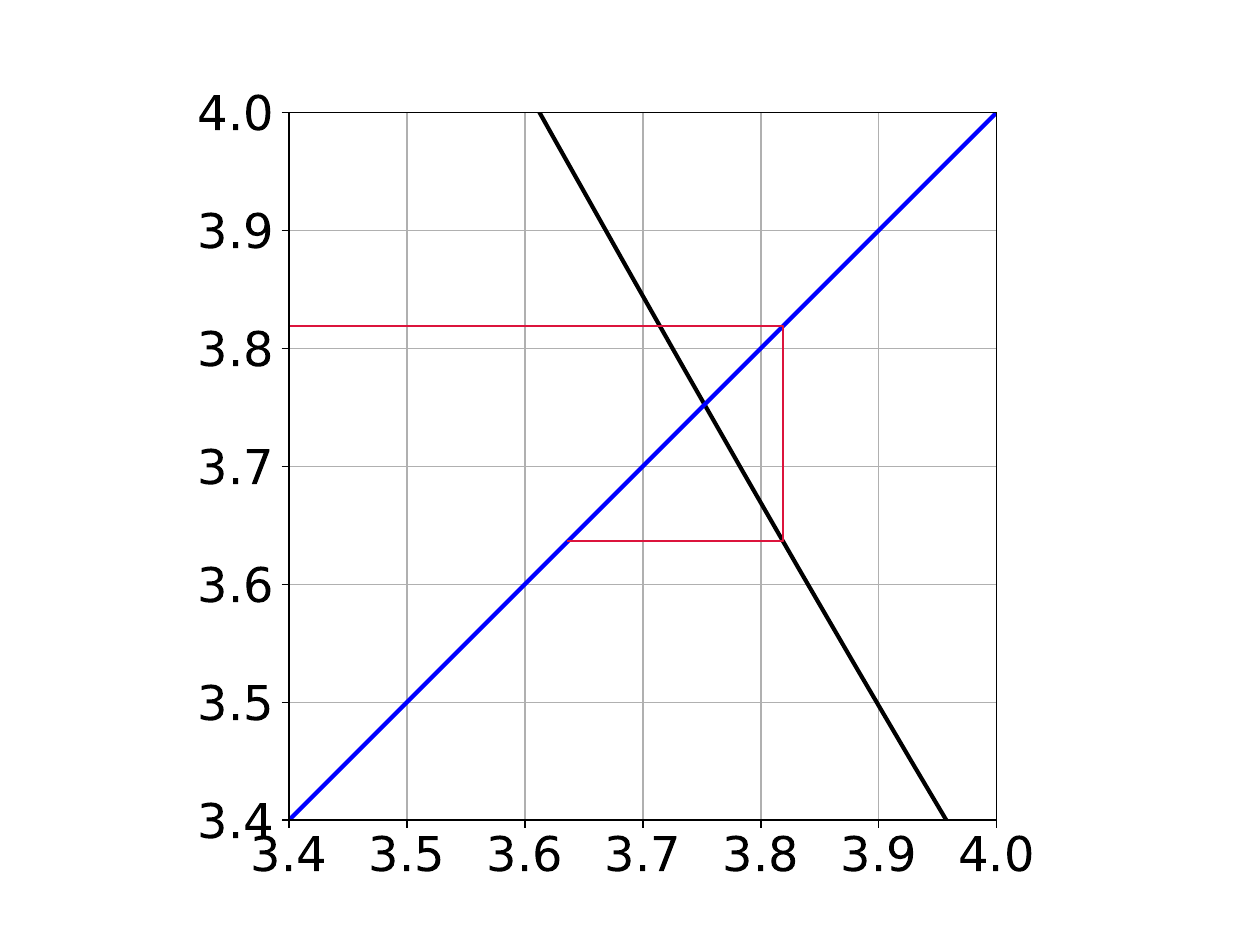}}
\caption{Lower estimation of the Misiurewicz parameter (case $k=0$):
$d(r)>0$ for $r=2.43$. Left: iterations of the critical point.
Right: close-up of cobweb around the fixed point.}
\label{fig:updown}
\end{figure}

We show the existence of the Misiurewicz parameter in the interval
$r\in (2.43,2.44)$. Namely, we prove that for the parameter value
$r^*\approx 2.436$ the critical point $c=2$ is mapped onto 
the unstable fixed point after exactly three iterations.
We make use of the cobweb diagram, instead of using numerical
approximations methods. 
Let $d(r)$ be the distance between the third iteration 
of the critical point $x_c=2$ and the corresponding 
fixed point $x_f(r)$, i.e., $d(r)=f^3(2,r)-x_f(r)$. 
Although we do not have an explicit expression for 
the fixed point, $d(r)$ represents a smooth function in $r$. 
Since $d(r)$ changes sign in the interval $2.43<r<2.44$, 
that is illustrated in Figures~\ref{fig:updown}--\ref{fig:downup}, 
by the Intermediate Value Theorem there exists $r^* \in (2.43,2.44)$
such that $d(r^*)=0$, and therefore the critical point is 
mapped onto the fixed point in three iterations. 
Note that in this way we have obtained actually 
a rigorous proof of the existence of a Misiurewicz map  
and the numerics were only used for estimation 
of the Misiurewicz parameter.\vspace{1mm} 

\begin{figure}[!htb]
\centering{
  \includegraphics[scale=0.42,trim= 50mm 8mm 25mm 11mm]{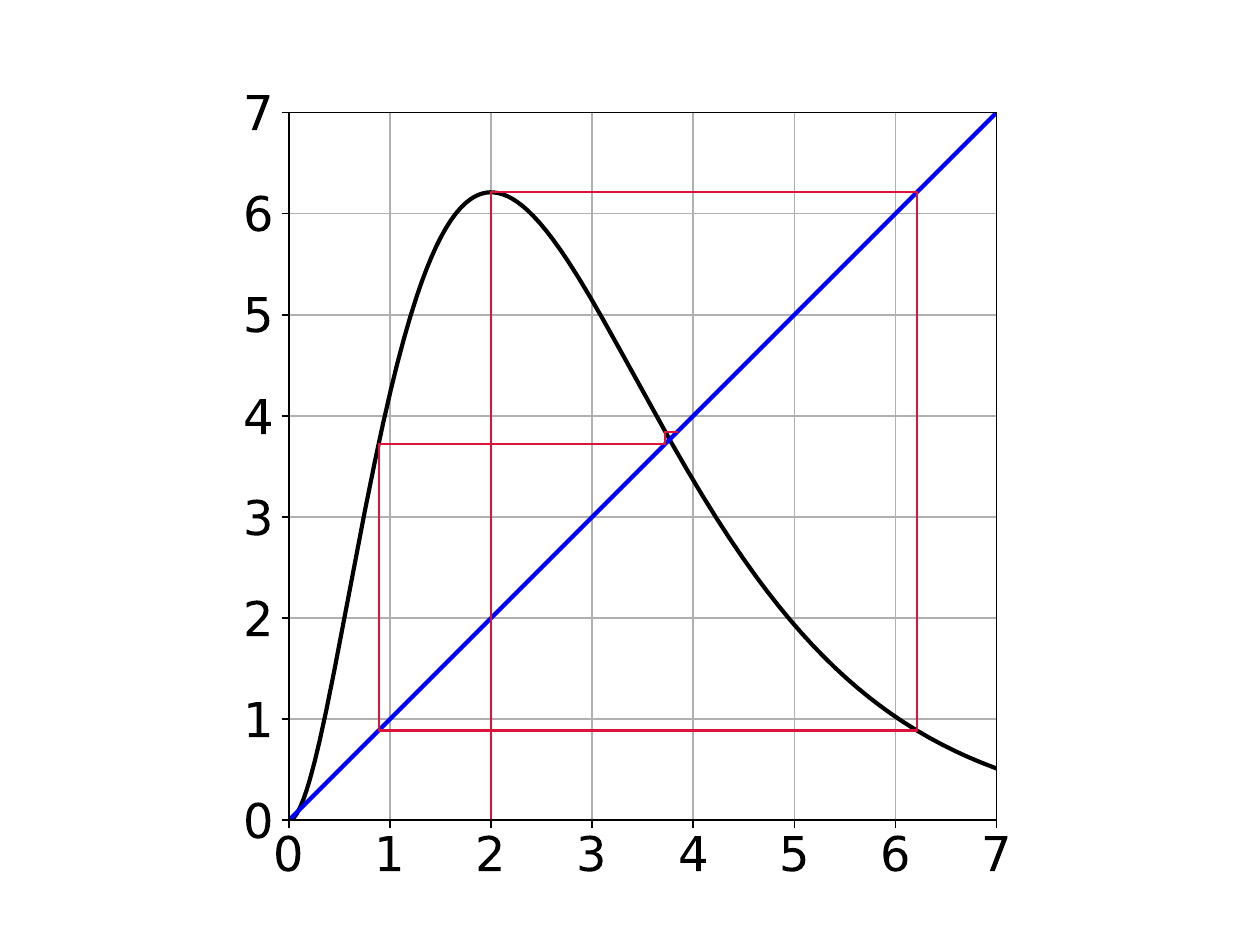}
  \includegraphics[scale=0.42,trim= 0mm 8mm 45mm 11mm]{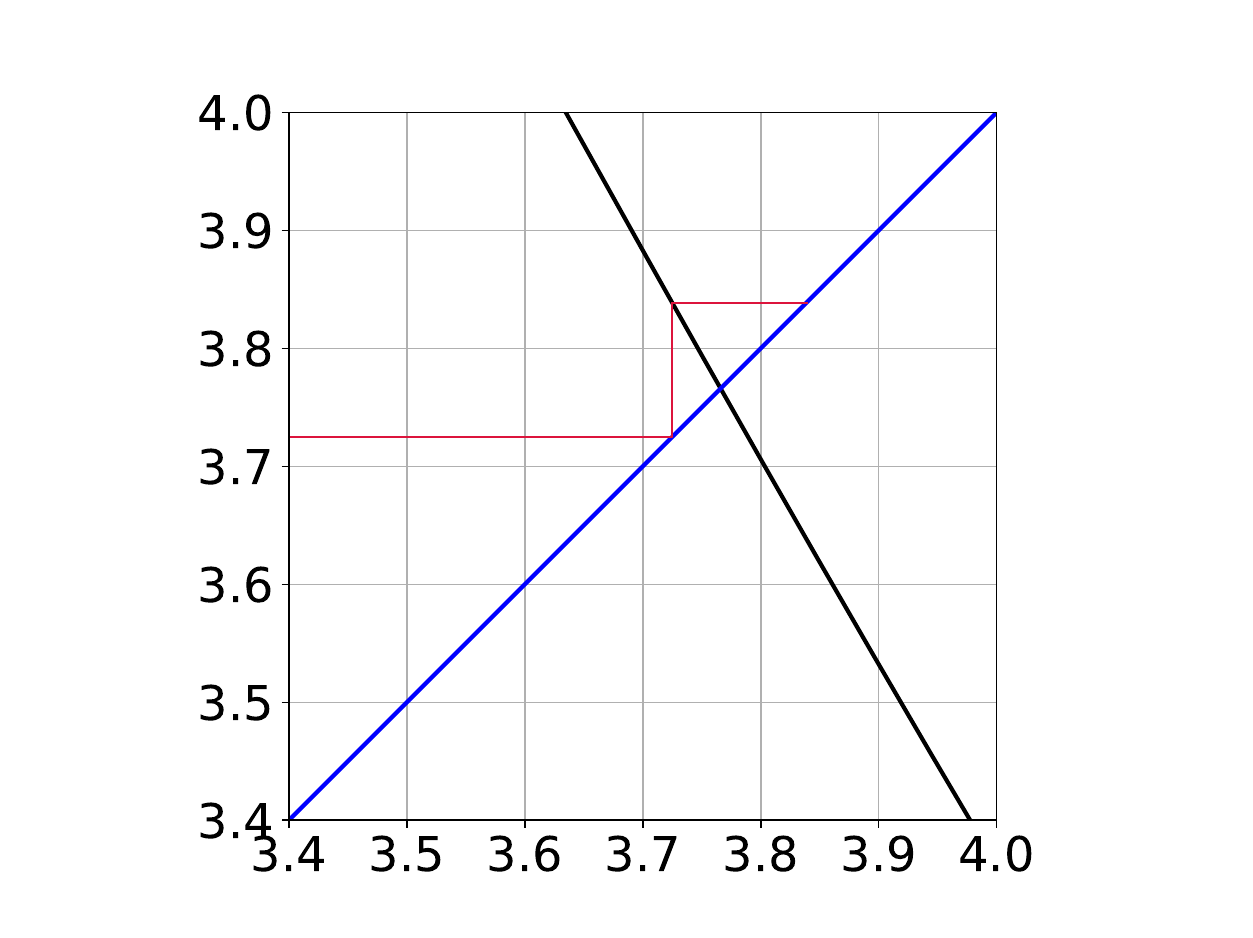}}
\caption{Upper estimation of the Misiurewicz parameter (case $k=0$):
$d(r)<0$ for $r=2.44$. Left: iterations of the critical point.
Right: close-up of cobweb around the fixed point.}
\label{fig:downup}
\end{figure}

\noindent \textit{Ad} C6. We will verify the condition 
$C6$ numerically as in \cite{thunberg2001}.
Let us define $f(x,r) = f_r(x)$, and
$f^n(x,r)=f(f^{n-1}(x,r),r)=f^n_r(x)$ for $n>1$. 
For $r$ close to $r^*$ we may define $\zeta(r)$ 
by the formulas
\[
\zeta(r^*)=f_{r^*}(c)
\quad\text{and}\quad
f^2(\zeta(r),r)=z(r),
\]
where $c$ is the critical point and $z(r)$ is an unstable fixed
point of $f_r$ (see Lemma~\ref{lem:UnstableFP}). 
This means that $\zeta(r)$ is a point close to $f_{r}(c)$ 
with the same type of forward orbit as $f_{r^*}(c)$. 
We abbreviate $f(\zeta(r),r)$ to $\zeta_1(r)$.
The condition C6 now reads 
\[
\frac{d}{dr}(\zeta(r)-f(c,r))\big\vert_{r=r^*}\neq 0. 
\]
By the definition of $\zeta$, we have:
\begin{equation}
\begin{split}\label{eq:zeta}
\frac{d\zeta}{dr} &= 
\Big(\frac{\partial f}{\partial x}(\zeta,r)\Big)^{-1} 
\bigg\{\Big(\frac{\partial f}{\partial x}(\zeta_1,r)\Big)^{-1}
\Big(\frac{dz}{dr}-\frac{\partial f}{\partial r}(\zeta_1,r)\Big)
-\frac{\partial f}{\partial r}(\zeta,r)\bigg\}\\
&=\frac{dz}{dr}\cdot
\frac{\exp(\zeta+\zeta_1-2r^*)}{\zeta\zeta_1(2-\zeta)(2-\zeta_1)}
-\frac{\zeta_1\exp(\zeta-r^*)}{\zeta(2-\zeta)(2-\zeta_1)}
-\frac{\zeta}{2-\zeta}.
\end{split}
\end{equation}
Observe that all the partial derivatives in \eqref{eq:zeta} can be 
computed explicitly. To find the value of $\frac{dz}{dr}$
we differentiate the implicit formula for fixed points
$z^2\exp(r-z)=z$, which gives $\frac{dz}{dr}=\frac{z}{z-1}$.
In consequence, to check the condition C6, we need only the previous
estimation of the Misiurewicz parameter ($r^*\approx2.436$) and the
similar estimation of the corresponding fixed point
($z(r^*)\approx3.761$). The final calculations (done in MATLAB)
are presented in the first column of Table~\ref{tab:Mis}.
As we see 
$\frac{d}{dr}(\zeta(r)-f(c,r))\big\vert_{r=r^*}\approx-3.851$,
so it is nonzero, which shows that the condition C6 is satisfied 
and completes the proof.
\end{proof} 
\begin{table}[!htb]
\caption{Data for estimation of
$\Gamma=\frac{d}{dr}(\zeta(r)-f(c,r))\big\vert_{r=r^*}$
for some $k$ in $[0,0.58]$.}
\label{tab:Mis} 
\centering
$\begin{array}{lrrrrrrr}
\toprule
\text{term} 
& k=0
& k=0.01
& k=0.1
& k=0.3
& k=0.5 
& k=0.55
& k=0.58\\
\midrule
r^*
&2.436
&2.439
&2.461
&2.535
&2.681
&2.759
&2.851\\
\midrule
z(r^*)
&3.761
&3.768
&3.830
&3.999
&4.254
&4.367
&4.491\\
\midrule
\zeta=\zeta(r^*)
&6.186
&6.215
&6.443
&7.130
&8.403
&9.095
&9.948\\
\midrule
\zeta_1=\zeta_1(r^*)
&0.900
&0.895
&0.874
&0.814
&0.731
&0.697
&0.662\\
\midrule
\frac{d\zeta}{dr}(r^*)
&2.335
&2.335
&2.383
&2.594
&3.491
&4.433
&6.426\\
\midrule
\frac{\partial f}{\partial r}(c,r^*)
&6.186
&6.205
&6.343
&6.830
&7.903
&8.545
&9.368\\
\midrule
\Gamma=
\frac{d\zeta}{dr}(r^*)-
\frac{\partial f}{\partial r}(c,r^*)
&-3.851
&-3.870
&-3.960
&-4.236
&-4.412
&-4.112
&-2.942\\
\bottomrule
\end{array}$
\end{table}

\begin{remark}
Unfortunately, when $k>0$ we are not able to prove 
rigorously that the conditions C5 and C6 are satisfied for all $k$ 
in this range. However, we can provide numerical justification of 
both the existence of the Misiurewicz parameter $r^*$ (C5) and 
the nonzero value of the term
$\Gamma=\frac{d}{dr}(\zeta(r)-f(c,r))\big\vert_{r=r^*}$ (C6)
for some values of the parameter $k$ between $0$ and $0.58$.
Recall that, by Lemma~\ref{lem:UnstableFP}, for any $0< k<2$  there
exists $\widehat{r}=\widehat{r}(k)$ such that for all $r\geq
\widehat{r}$ the map $f_{r}$ has an unstable fixed point $x_f>2$.
It occurs that for $k\in(0,0.58]$
one can proceed in much the same way as in the case $k=0$ to show
that there exists $r^*$ such that the map $f_{r^*}$ is 
a Misiurewicz map. We simply search for $r^*$ such that the third
iteration of the critical point is mapped onto the unstable fixed
point (this method does not work for $k>0.58$).
The calculation of the term $\Gamma$ are also similar
as in the case $k=0$. The formula \eqref{eq:zeta} still holds, but
$\frac{dz}{dr}=\frac{z^2}{\exp(z-r)+z^2-2z}$ for $k>0$.
The results of our numerical analysis are summarized in
Table~\ref{tab:Mis}. Note that only the last row of
Table~\ref{tab:Mis} does not display monotonic behaviour.
Finally, in Figure~\ref{fig:graph} we present plots of the Misiurewicz
parameter $r^*$ and the term $\Gamma$ from the condition C6  
as functions of the parameter $k\in[0,0.58]$ (in both cases the step
size is $0.001$). Observe the rapid growth of the value of 
$\Gamma$ near $k=0.58$.
\end{remark}

\begin{figure}[!htb]
\centering{
\includegraphics[scale=0.78, trim= 0mm 3mm 0mm 5mm]{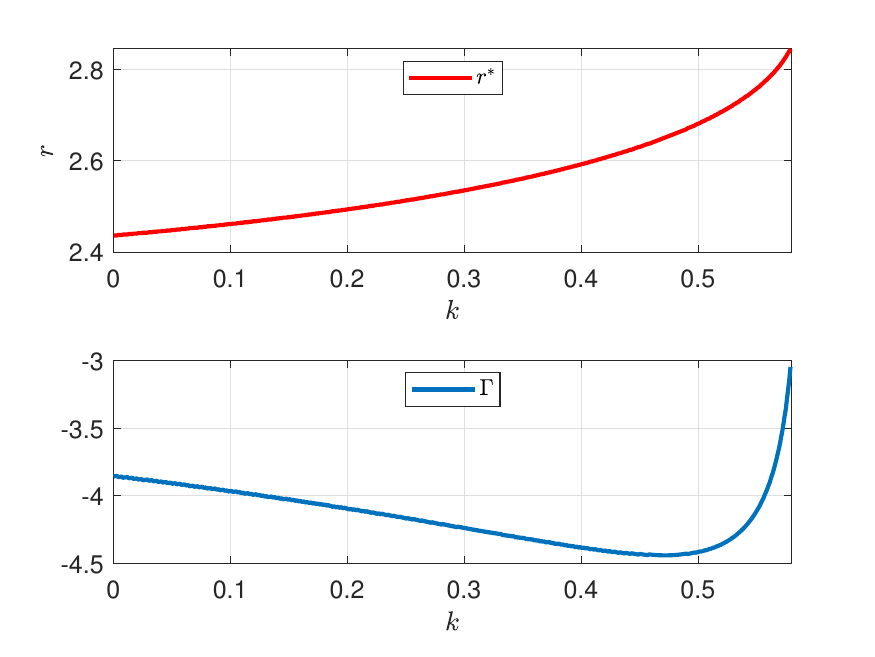}}
\caption{The Misiurewicz parameter $r^*$ (upper plot) and 
$\Gamma=\frac{d}{dr}(\zeta(r)-f(c,r))\big\vert_{r=r^*}$ (lower plot)
as functions of the parameter $k\in[0,0.58]$ (step size $0.001$).}
\label{fig:graph}
\end{figure}

We finish this subsection with a result showing that in the Chialvo
model two cases (periodic attractors and acips) occupy a set of full
measure in parameter space. Namely, the following classical theorem 
from \cite[Th. B]{AvilaetAll2003} can be applied in the analysis 
of the reduced Chialvo map. The original result is more general, as it
considers the class of quasiquadratic maps, which contains
S-unimodal maps with a non-degenerate critical point.
Recall that a one-parameter family of S-unimodal maps is called
\emph{trivial} if all maps have the same kneading sequence and 
the multipliers of any non-repelling orbits are the same for all maps.

\begin{theorem}[Avila, Lyubich, de Melo]\label{thm:ALdM}
Let $\Lambda\subset\mathbb{R}$ be open and let
$\{f_\lambda\}_{\lambda\in\Lambda}$
be a non-trivial real-analytic family of S-unimodal maps
with a non-degenerate critical point. Then for
Lebesgue-almost all parameter values $\lambda\in\Lambda$ the map
$f_\lambda$ has either a periodic attractor (such a map is called
regular) or an interval attractor supporting an acip (such a one is
called stochastic).
\end{theorem}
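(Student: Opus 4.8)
This is the restriction to S-unimodal families of the Avila--Lyubich--de Melo regular-or-stochastic dichotomy for non-trivial real-analytic families of quasiquadratic maps, \cite[Th.~B]{AvilaetAll2003}, so the plan is not to give an independent argument but to perform the reduction. First I would invoke the inclusion recalled in the remark preceding the statement: a real-analytic S-unimodal map with a non-degenerate critical point belongs to the quasiquadratic class of \cite{AvilaetAll2003}. Consequently a non-trivial real-analytic family $\{f_\lambda\}_{\lambda\in\Lambda}$ of S-unimodal maps with non-degenerate critical point is a non-trivial real-analytic family of quasiquadratic maps, and \cite[Th.~B]{AvilaetAll2003} applies to it directly.

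Next I would reconcile the remaining vocabulary. The notion of a \emph{trivial} family adopted above --- constancy of the kneading sequence and of the multipliers of all non-repelling cycles along the family --- is exactly the form taken, for analytic unimodal families, by the triviality condition of \cite{AvilaetAll2003}, so ``non-trivial'' in our sense is ``non-trivial'' in theirs. On the side of conclusions, \cite[Th.~B]{AvilaetAll2003} gives, for Lebesgue-almost every $\lambda$, that $f_\lambda$ is either \emph{regular}, i.e.\ has a hyperbolic attracting cycle, or \emph{stochastic}, i.e.\ carries an absolutely continuous invariant probability measure. By Theorem~\ref{thm:attractorgen} the unique metric attractor of an S-unimodal map is an attracting cycle, a Cantor set of measure zero, or an interval attractor; in the stochastic case the acip cannot be carried by a cycle or by a measure-zero set, hence it is supported on, and equivalent to Lebesgue measure on, an interval attractor. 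This is precisely the asserted alternative, the Cantor-attractor parameters being confined to a Lebesgue-null set.

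The step I would set out carefully rather than gloss over is the first one: that real-analyticity together with unimodality, negative Schwarzian derivative, and a non-degenerate critical point really does place a map inside the quasiquadratic class of \cite{AvilaetAll2003} --- this is where real-analyticity, rather than the mere $C^3$ regularity used earlier for the Schwarzian criterion, is indispensable, since it is what guarantees the absence of wandering intervals and the ``real bounds'' underlying their machinery. Apart from this citation-level point the argument is bookkeeping. To close, I would note that the hypotheses thus isolated are satisfied by the $1$D Chialvo families $\{f_{r,k}\}_{r}$ with $k\in[0,2)$ fixed and $\{f_{r,k}\}_{(r,k)}$ over $\mathbb{R}\times[0,2)$: analyticity is immediate, the turning point $c=2$ is non-degenerate, negativity of the Schwarzian derivative was established earlier in the paper, and non-triviality follows, as in the proof of Theorem~\ref{thm:apo}, from the coexistence of parameters carrying a period-two attractor and parameters carrying a period-three attractor. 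Hence Theorem~\ref{thm:ALdM} specializes to the reduced Chialvo model, which is the use the paper makes of it.
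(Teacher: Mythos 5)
Your proposal is correct and matches the paper's treatment: the paper does not prove this result either, but imports it as \cite[Th.~B]{AvilaetAll2003} with exactly the reduction you describe, namely the remark that the quasiquadratic class of Avila--Lyubich--de Melo contains S-unimodal maps with a non-degenerate critical point. Your additional bookkeeping (matching the triviality notions and using Theorem~\ref{thm:attractorgen} to identify the support of the acip with an interval attractor) is sound and only makes explicit what the paper leaves implicit.
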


\begin{proposition}\label{cor:pori}
If $k\in[0,2)$ is fixed then for Lebesgue-almost 
all parameters $r$ the 1D Chialvo map $f_{r}$ 
has either a periodic attractor or an interval attractor 
supporting an acip. 
\end{proposition}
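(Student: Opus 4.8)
The plan is to derive Proposition~\ref{cor:pori} from the Avila--Lyubich--de Melo dichotomy (Theorem~\ref{thm:ALdM}), after first cutting out the parameter region on which the Chialvo map is a genuine S-unimodal map. Fix $k\in[0,2)$. First I would let $\Lambda=\Lambda_k$ be the interior of the set of $r\in\mathbb{R}$ for which $f_{r,k}$, restricted to its dynamical core $I_{r,k}$ as constructed in Section~\ref{dyncore_subsection}, is unimodal with a non-degenerate critical point. For such $r$ the restriction $f_{r,k}\colon I_{r,k}\to I_{r,k}$ is S-unimodal: it has negative Schwarzian derivative for every $r$ by the proposition opening Section~2; $I_{r,k}$ is a bona fide interval with interior maximum $c=2$ exactly when $f_{r,k}^2(c)<c<f_{r,k}(c)$ (e.g.\ $I_{r,k}=[f_{r,k}^2(c),f_{r,k}(c)]$ as in \eqref{LemmaDynCoreEq}), the critical point is non-degenerate since $\frac{\partial^2 f}{\partial x^2}(2,r)=-2\exp(r-2)\neq0$, and $(x,r)\mapsto f_{r,k}(x)=x^2\exp(r-x)+k$ is real-analytic, with the endpoints of $I_{r,k}$ depending analytically on $r$. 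Thus $\{f_{r,k}\}_{r\in\Lambda}$ is a real-analytic family of S-unimodal maps with a non-degenerate critical point over the open set $\Lambda$ (the fact that the domain may vary with the parameter is harmless, exactly as handled in conditions C1--C3 in the proof of Theorem~\ref{thm:ai}).

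Next I would verify that this family is \emph{non-trivial} in the sense of Theorem~\ref{thm:ALdM}, i.e.\ that the maps do not all share one kneading sequence. As noted in the proof of Theorem~\ref{thm:apo}, for some $r\in\Lambda$ (just beyond the flip value of Theorem~\ref{thm:flip}) the map $f_{r,k}$ has an attracting period-two orbit, while for some other $r\in\Lambda$ it has an attracting period-three orbit; by Lemma~\ref{lem:comb} these two maps are not combinatorially equivalent, hence have distinct kneading sequences, so the family is non-trivial. (Alternatively one may pair the Misiurewicz parameter of Theorem~\ref{thm:ai} with a parameter furnished by Corollary~\ref{afterLemma3_4}.) Theorem~\ref{thm:ALdM} then applies to $\{f_{r,k}\}_{r\in\Lambda}$ and yields that for Lebesgue-almost every $r\in\Lambda$ the map $f_{r,k}$ is either regular (has a periodic attractor) or stochastic (has an interval attractor supporting an acip) --- which is precisely the asserted conclusion for a.a.\ $r\in\Lambda$.

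It then remains to dispose of the parameters $r\notin\Lambda$. By the case analysis of Section~\ref{dyncore_subsection} together with the bifurcation results of Section~2, such $r$ are exactly those for which $f_{r,k}$ does not act S-unimodally on a non-degenerate core, and this happens only in the trivial-dynamics regime: $f_{r,k}$ then has a globally attracting fixed point (this covers, for instance, all sufficiently negative $r$, where $f_{r,k}$ maps its domain into a tiny neighbourhood of $k$ on which it is a contraction, and, for $k=0$, all $r<1$, below the fold value of Theorem~\ref{thm:fold}, where $f_{r,0}(x)<x$ for every $x>0$), apart from a locally finite set of bifurcation/degenerate parameters, which is Lebesgue-null. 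Since a globally attracting fixed point is in particular a periodic attractor, the conclusion of the proposition holds at every such $r$ outside a null set, and combining the two ranges gives the statement for Lebesgue-almost all $r\in\mathbb{R}$.

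I expect the principal obstacle to be precisely this bookkeeping around $\Lambda$: pinning down the open parameter set on which $f_{r,k}$ is a genuine S-unimodal map with an analytically varying dynamical core, certifying non-triviality by exhibiting two non-combinatorially-equivalent maps that really lie inside $\Lambda$, and checking that the leftover set $\mathbb{R}\setminus\Lambda$ splits into the trivial-dynamics regime (where the conclusion is immediate) plus a Lebesgue-null exceptional set. The deep analytic content --- the Avila--Lyubich--de Melo dichotomy itself --- is imported as a black box.
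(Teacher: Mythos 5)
Your proposal follows essentially the same route as the paper: both reduce the statement to the Avila--Lyubich--de Melo dichotomy (Theorem~\ref{thm:ALdM}) and certify non-triviality of the family $\{f_{r,k}\}_{r}$ by exhibiting two maps with distinct kneading data --- the paper uses the kneading sequences $(CC\dotsc)$ versus $(C1\dotsc)$, while you use period-two versus period-three attractors via Lemma~\ref{lem:comb}, an alternative the paper itself declares interchangeable in the proof of Theorem~\ref{thm:apo}. Your extra bookkeeping about the open parameter set $\Lambda$ on which $f_{r,k}$ is genuinely S-unimodal, and about the trivial-dynamics complement, is more careful than the paper's one-line argument but does not change the substance of the proof.
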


\begin{proof}
It suffices to show that the 1D Chialvo family with respect
to $r$ is non-trivial. However, when $k\in[0,2)$ our family
contains two maps with the kneading sequences of the form
$(CC\dotsc)$ (the critical point is fixed) and 
$(C1\dotsc)$ (the critical point is on the left from 
the fixed point), which is our claim.
\end{proof}
      
\subsection{Topological chaos}\label{subsec:topological}
Let us start this subsection with three classical definitions 
of topological chaos for discrete dynamical systems given 
by Li and Yorke
(\cite{LiYorke1975}), Block and Coppel (\cite{BlockCoppel1992}) 
and Devaney (\cite{devaney2003}). The relations among these
three definitions of chaos for continuous maps are comprehensively
explained in \cite{AulbachKieninger2001}. Recall that $\Sigma$ 
denotes the \emph{sequence space} on the two symbols
$\{\alpha=(a_0a_1a_2\dotsc)\mid a_i=0\text{ or }a_i=1\}$ 
with the metric
$d(\alpha,\beta)=\sum_{i=0}^\infty{\abs{a_i-b_i}}/{2^i}$. 
The \emph{shift} map $\sigma\colon\Sigma\to\Sigma$ is given by
$(a_0a_1a_2\dotsc)\to(a_1a_2a_3\dotsc)$.
From now on assume that $X$ is a compact metric space and 
$f\colon X\to X$ is continuous. 

\begin{definition}[L/Y-chaos]
 A map $f$ is called \textit{chaotic in the sense of Li and Yorke}
 (\textit{L/Y-chaotic} for short) if there exists an uncountable
 subset $S$ (called \textit{scrambled set}) of $X$ 
 with the following properties:
 \begin{enumerate}
      \item $\limsup\limits_{n\rightarrow \infty}{d(f^n(x),f^n(y))}>0$ 
			for all $x,y \in S$, $x\neq y$,
      \item $\liminf\limits_{n\rightarrow \infty}{d(f^n(x),f^n(y))}=0$ 
			for all $x,y \in S$, $x\neq y$,
      \item $\limsup\limits_{n\rightarrow \infty}{d(f^n(x),f^n(p))}>0$ 
			for all $x\in S$, $p\in X$, $p$ periodic.
 \end{enumerate}
\end{definition}

\begin{definition}[B/C-chaos]
A map $f$ is called \textit{chaotic in the sense of Block and Coppel}
(\textit{B/C-chaotic} for short) if there exists $m \in \mathbb{N}$
and a compact $f^m$-invariant subset $Y$ of $X$ such that $f^m|_{Y}$
is semi-conjugate to the shift on $\Sigma$, i.e., if there exists 
a continuous surjection $h\colon Y \to \Sigma$ satisfying 
$h \circ f^m = \sigma \circ h$ on $Y$.
\end{definition}

\begin{definition}[D-chaos]
 A map $f$ is called \textit{chaotic in the sense of Devaney} 
 (\textit{D-chaotic} for short) if there exists a compact invariant
 subset $Y$ (called a \textit{D-chaotic set}) of $X$ with 
 the following properties:
 \begin{enumerate}
      \item $f|_{Y}$ is topologically transitive,
      \item the set of periodic points $\textrm{Per}(f|_{Y})$ 
      is dense in $Y$, i.e.,  $\overline{\textrm{Per}(f|_{Y})}=Y$,
      \item $f|_{Y}$ has sensitive dependence on initial conditions.
 \end{enumerate}
\end{definition}

The following result provides the sufficient condition for 
the existence of topological chaos in the 1D Chialvo model 
in all the above senses.
\begin{theorem}\label{thm:three}
If the 1D Chialvo map $f$ satisfies the condition
\begin{equation}\label{eq:three}
f^2(c)<f^3(c)<c<f(c)\quad
\text{($c=2$ is the critical point)},
\end{equation}
then it is chaotic in the sense of Li and Yorke, 
Block and Coppel, and Devaney.
\end{theorem}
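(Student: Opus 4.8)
The plan is to extract from hypothesis \eqref{eq:three} a \emph{horseshoe} for the second iterate $f^{2}$, and then to read off the three kinds of chaos by classical arguments. We work throughout on $I=[f^{2}(c),f(c)]$, which under \eqref{eq:three} is forward invariant: $f$ is unimodal on $I$ with interior maximum at $c$ (since $f^{2}(c)<c<f(c)$) and endpoint values $f(f^{2}(c))=f^{3}(c)$, $f(f(c))=f^{2}(c)$, so $f(I)=[\min\{f^{2}(c),f^{3}(c)\},f(c)]=[f^{2}(c),f(c)]=I$, the last equality using $f^{2}(c)<f^{3}(c)$. Next I would introduce an auxiliary point: since $f$ is strictly increasing on $[f^{2}(c),c]$ with $f(f^{2}(c))=f^{3}(c)<c<f(c)$, the Intermediate Value Theorem yields a unique $a^{\ast}\in(f^{2}(c),c)$ with $f(a^{\ast})=c$ --- this is the step where the inequality $f^{3}(c)<c$ is indispensable. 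Put $P:=[a^{\ast},c]$ and $Q:=[c,f(c)]$; these are nondegenerate intervals meeting only at the point $c$.

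The heart of the argument is a set of covering relations. Since $f$ is increasing on $P$, $f(P)=[f(a^{\ast}),f(c)]=[c,f(c)]=Q$; since $f$ is decreasing on $Q$, $f(Q)=[f(f(c)),f(c)]=[f^{2}(c),f(c)]=I$; and $I\supseteq P\cup Q=[a^{\ast},f(c)]$ because $f^{2}(c)<a^{\ast}$. Hence, with $g:=f^{2}$, we have $g(P)=f(Q)=I\supseteq P\cup Q$ and $g(Q)=f(I)=I\supseteq P\cup Q$, so $g$ is (strictly) turbulent on the pair $P,Q$. By the standard realisation lemma for such covering chains, the cyclic loop $P\to Q\to Q\to P$ is realised by a point $x$ with $x\in P$, $f(x)\in Q$, $f^{2}(x)\in Q$ and $f^{3}(x)=x$; this $x$ cannot be fixed (otherwise $x\in P\cap Q=\{c\}$, contradicting $f(c)\neq c$), so $f$ has a periodic orbit of minimal period three.

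I would then harvest the three notions of chaos. For \emph{Li--Yorke} chaos: a period-three orbit invokes the Li--Yorke theorem \cite{LiYorke1975}, giving periodic orbits of all periods and an uncountable scrambled set, so $f$ is L/Y-chaotic. For \emph{Block--Coppel} chaos: let $Y:=\bigcap_{n\geq0}g^{-n}(P\cup Q)$, a compact $g$-invariant set not containing $c$ (because $g(c)=f^{2}(c)\notin P\cup Q$); the itinerary map $h\colon Y\to\Sigma$ recording whether $g^{n}(x)\in P$ or $\in Q$ is a continuous surjection (surjectivity again by the realisation lemma) with $h\circ g=\sigma\circ h$, so $f^{2}|_{Y}$ is semi-conjugate to the shift and $f$ is B/C-chaotic with $m=2$. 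For \emph{Devaney} chaos: the turbulence of $g$ yields, by the classical horseshoe construction (cf.\ \cite{BlockCoppel1992} and the comparison of the three notions in \cite{AulbachKieninger2001}), a compact $g$-invariant set $\Lambda$ on which $g$ is topologically transitive, has dense periodic points, and is sensitive; passing from $\Lambda$ to $Y':=\Lambda\cup f(\Lambda)$ transfers these properties to $f|_{Y'}$ (a dense $g$-orbit in $\Lambda$ together with its $f$-image is dense in $Y'$, periodic points of $g$ are periodic for $f$, and sensitivity is inherited), so $f$ is D-chaotic.

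The step I expect to cost the most care is the bookkeeping of the covering relations together with the precise role of each inequality in \eqref{eq:three}: $c<f(c)$ fixes the orientation, $f^{2}(c)<f^{3}(c)$ makes $I$ invariant with $f^{2}(c)$ as the genuine minimum, and $f^{3}(c)<c$ is exactly what places $a^{\ast}$ strictly inside $(f^{2}(c),c)$ so that $f(Q)=I$ still covers $P\cup Q$. The second delicate point is the passage from the $f^{2}$-horseshoe to a genuinely Devaney-chaotic invariant set for $f$ (the itinerary map is \emph{a priori} only a semi-conjugacy), which is handled by the standard interval-map horseshoe machinery rather than anything specific to the Chialvo family.
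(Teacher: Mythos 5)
Your proposal is correct and follows essentially the same route as the paper: both arguments use the ordering $f^2(c)<f^3(c)<c<f(c)$ to exhibit two closed intervals meeting only at the critical point $c$ on which $f^2$ is turbulent, and then pass to the three notions of chaos via the implications in \cite{AulbachKieninger2001}. The only differences are cosmetic --- the paper takes the intervals $[f^2(c),c]$ and $[c,z]$ with $z$ the unique fixed point in $(c,f(c))$ and simply cites Prop.~3.3, Th.~4.1 and Th.~4.2 of that reference, whereas you use $[a^{*},c]$ and $[c,f(c)]$ and re-derive parts of the cited implications (the period-three orbit for Li--Yorke, the itinerary semi-conjugacy for Block--Coppel) by hand.
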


\begin{proof}
Let $x_0=c$ and $x_i=f(x_{i-1})$ for $i=1,2,3$.
By the Intermediate Value Theorem, the map 
$f\colon[x_0,x_1]\to[x_2,x_1]$ has a unique fixed point
$z\in(x_0,x_1)$. Observe that
\[
f\big([x_2,x_0]\big)=[x_3,x_1]
\supset[z,x_1]
\quad\text{and}\quad
f\big([x_0,z]\big)=[z,x_1]
\]
and, in consequence,
\[
[x_2,z]=[x_2,x_0]\cup[x_0,z]\subset
f^2\big([x_2,x_0]\big)\cap f^2\big([x_0,z]\big).
\]
Hence $f^2$ is turbulent, i.e., there are nonempty closed
subintervals $J$, $K$ of $[x_2,x_1]$ with disjoint interiors
such that $J\cup K\subset f^2(J)\cap f^2(K)$.
By \cite[Prop. 3.3]{AulbachKieninger2001}, the map
$f\colon[x_2,x_1]\to[x_2,x_1]$ is B/C-chaotic.
Moreover, by \cite[Th. 4.1]{AulbachKieninger2001},
this is equivalent to being D-chaotic and,
by \cite[Th. 4.2]{AulbachKieninger2001}, this implies
being L/Y-chaotic.
\end{proof}

\begin{remark}
It is worth pointing out that according to the above definitions 
of chaos the map is chaotic if it displays chaotic behaviour on 
a nonempty compact invariant subset of
the dynamical core (not necessarily on the whole domain interval)
and this set can be small both in the sense of measure and category.
Notice also that the above proof works for an arbitrary 
unimodal map, whose critical point satisfies \eqref{eq:three}.
\end{remark}

It occurs that the sufficient condition from
Theorem~\ref{thm:three} is actually satisfied
for some range of parameters in the Chialvo model.

\begin{proposition}\label{prop:topchaos}
For $k=0$ and $2.6\leq r \leq 2.9$ the 1D Chialvo map  
$f_r$ satisfies \eqref{eq:three} and, in consequence, 
is chaotic in the sense of Li and Yorke, 
Block and Coppel, and Devaney.
\end{proposition}

The technical proof of Proposition~\ref{prop:topchaos}
is postponed to Appendix~\ref{appendix:chaos}.

\begin{figure}[!ht]
\centering{
\includegraphics[scale=0.324, 
trim = 25mm 5mm 15mm 8mm]{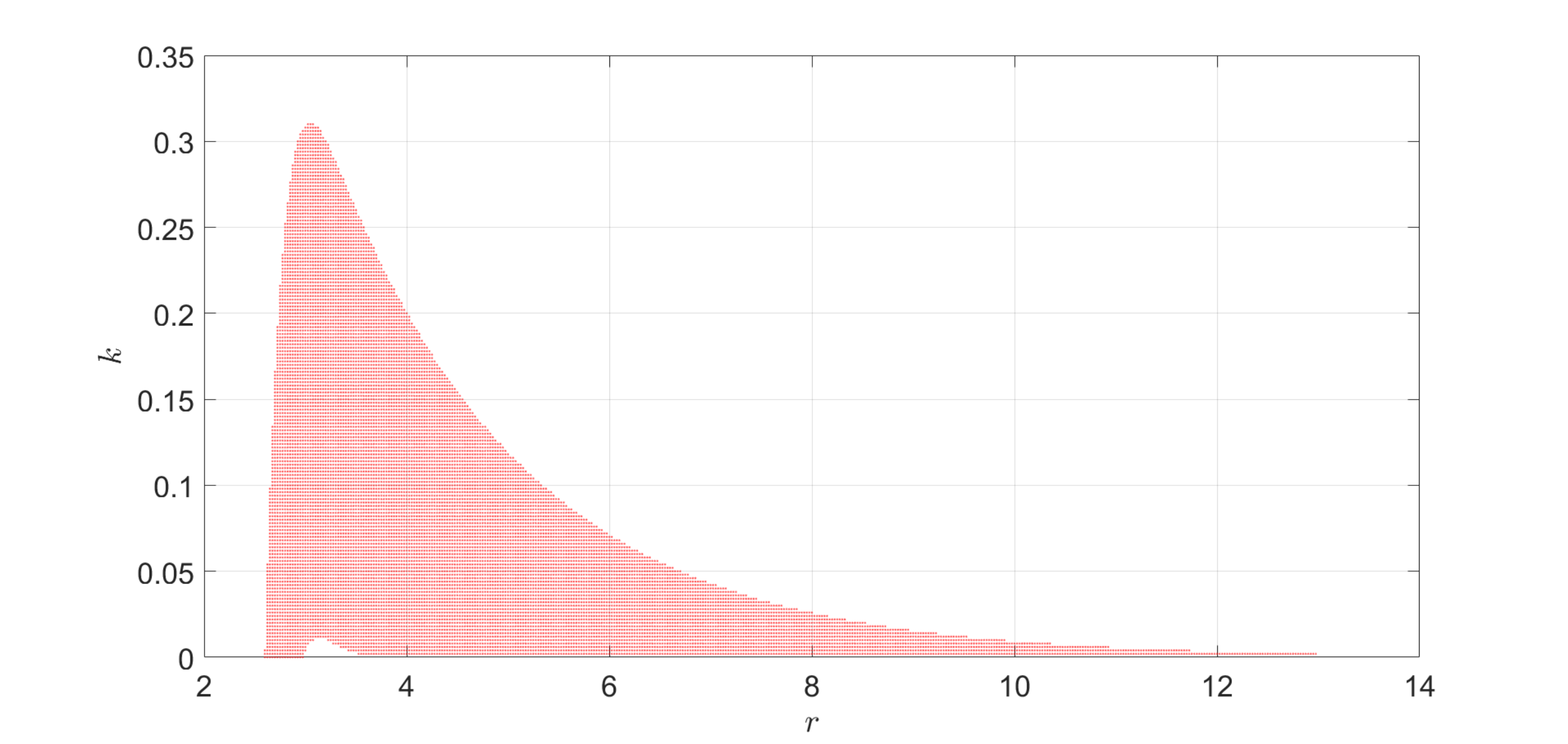}}
\caption{Topological chaos values (red dots) 
for the 1D Chialvo model in $(r,k)$ parameter plane.}
\label{fig:rvsk}
\end{figure}

\begin{remark}
When $k>0$, by means of numerical simulations, we are able to
observe a dependence between values of $r$ and $k$ for which 
the condition \eqref{eq:three} from Theorem~\ref{thm:three} is
satisfied. However, it is difficult to obtain an interval 
for $r$ where all the inequalities are satisfied independently 
of the values of $k$. Figure~\ref{fig:rvsk} below depicts 
the result of a numerical simulation in MATLAB. 
We plot values of $r$ and $k$ for which the condition
\eqref{eq:three} is satisfied. The intervals of study for both
parameters are: $2\leq r\leq 14$ with step size $0.025$ and 
$0\leq k\leq 0.35$ with step size $0.002$. The short of 
the program and the table with computed parameter values can 
be found in an open repository \cite{dataset}. 
\end{remark}

\subsection{Conclusions for the neuron activity analysis}
It should be emphasized that the standalone 1D model 
\eqref{eq:Chialvo1DIM} is able to reproduce 
a wide range of interesting neuronal behaviours
(similarly to its 2D counterpart). 
In Figure \ref{fig:1DIMDynamics} we show two examples of 
voltage trajectories. The trajectory with initial condition
$x_\textrm{init}=2.2$ and parameter values $k=0.1$ and $r=2.45$
displays `chaotic' spiking with subthreshold oscillations
(depicted on the left), which can be also classified as
(non-regular) MM(B)Os (mixed-mode (bursting) oscillations), 
i.e., spikes (or bursts) interspersed with small 
(`subthreshold') oscillations. 
Note that Theorem~\ref{thm:ai} guarantees that 
for $k=0$ the set of parameters $r$ for which the model 
exhibits `strongly chaotic' behaviour (chaotic spiking/ bursting,
irregular MMOs, etc.) is `observable' (has
a positive Lebesgue measure). However, the estimation of its
actual size is a completely different and much harder task
(see for instance \cite{Luzzatto2006}).
Moreover, for $k=0$ `weak chaotic' behaviour of the model 
is even more clearly `visible' as, according to
Proposition~\ref{prop:topchaos}, it occurs for the set of
parameters $r$ containing a `quite big' interval.

On the other hand, for the same value of $r$ and $k$ increased 
to $k=0.2$ the neuron with initial voltage value
$x_\textrm{init}=2.25$ exhibits, after initial adaptation, 
regular MMOs (depicted on the right). In this case, both spikes
and subthreshold (non-spikes, i.e. low amplitude) 
oscillations occur periodically. 
Observe that, according to Theorem~\ref{thm:apo}, 
periodicity and asymptotic periodicity (periodic
spiking/bursting, phasic spiking/bursting, regular MMOs, etc.)
are the most `common' types of behaviour of 
the~model under consideration.

\begin{figure}[!htb]
\centering{
    \includegraphics[scale=0.21, 
    trim= 15mm 0mm 5mm 0mm]{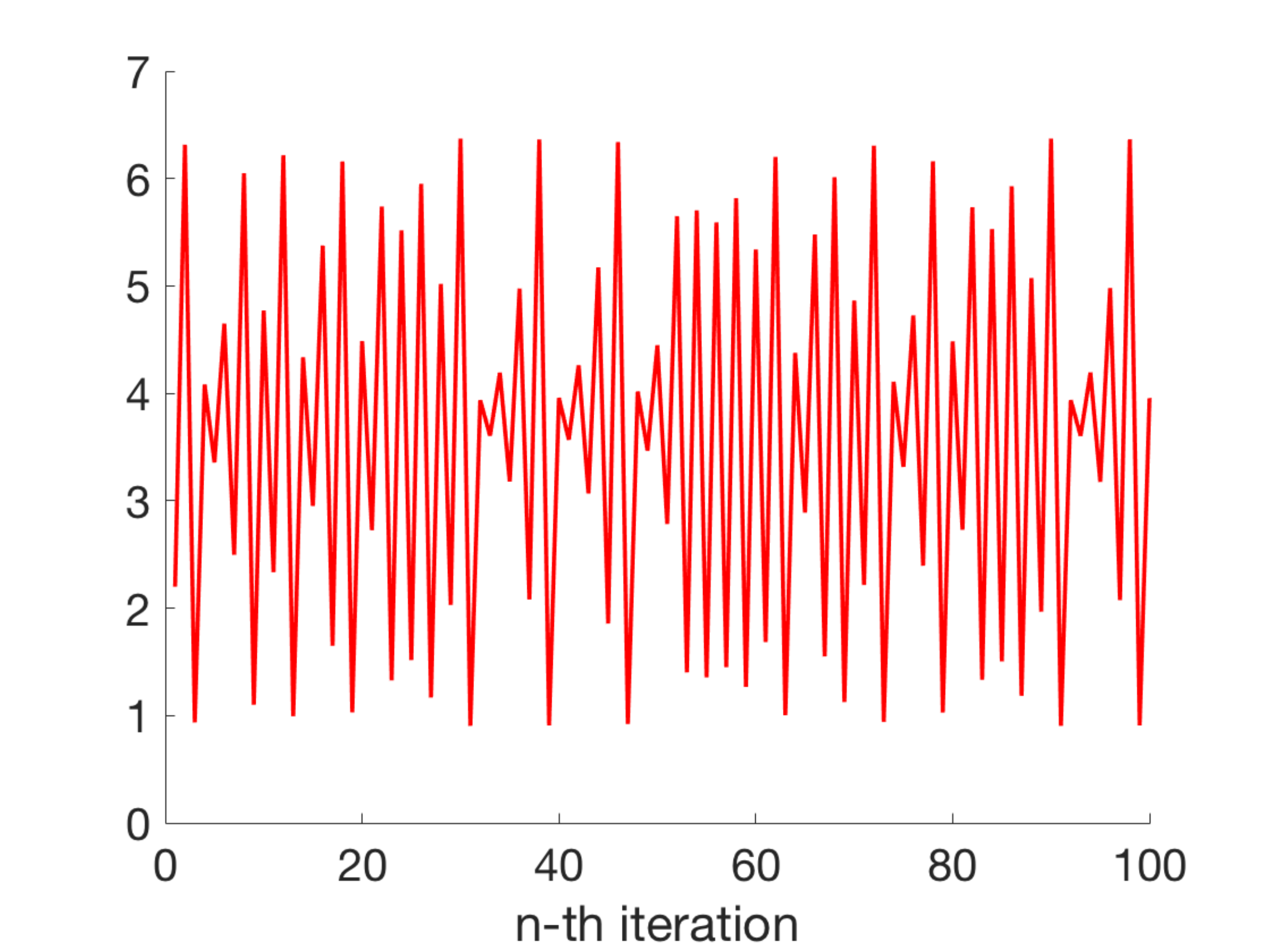}
    \includegraphics[scale=0.21, 
    trim= 10mm 0mm 10mm 0mm]{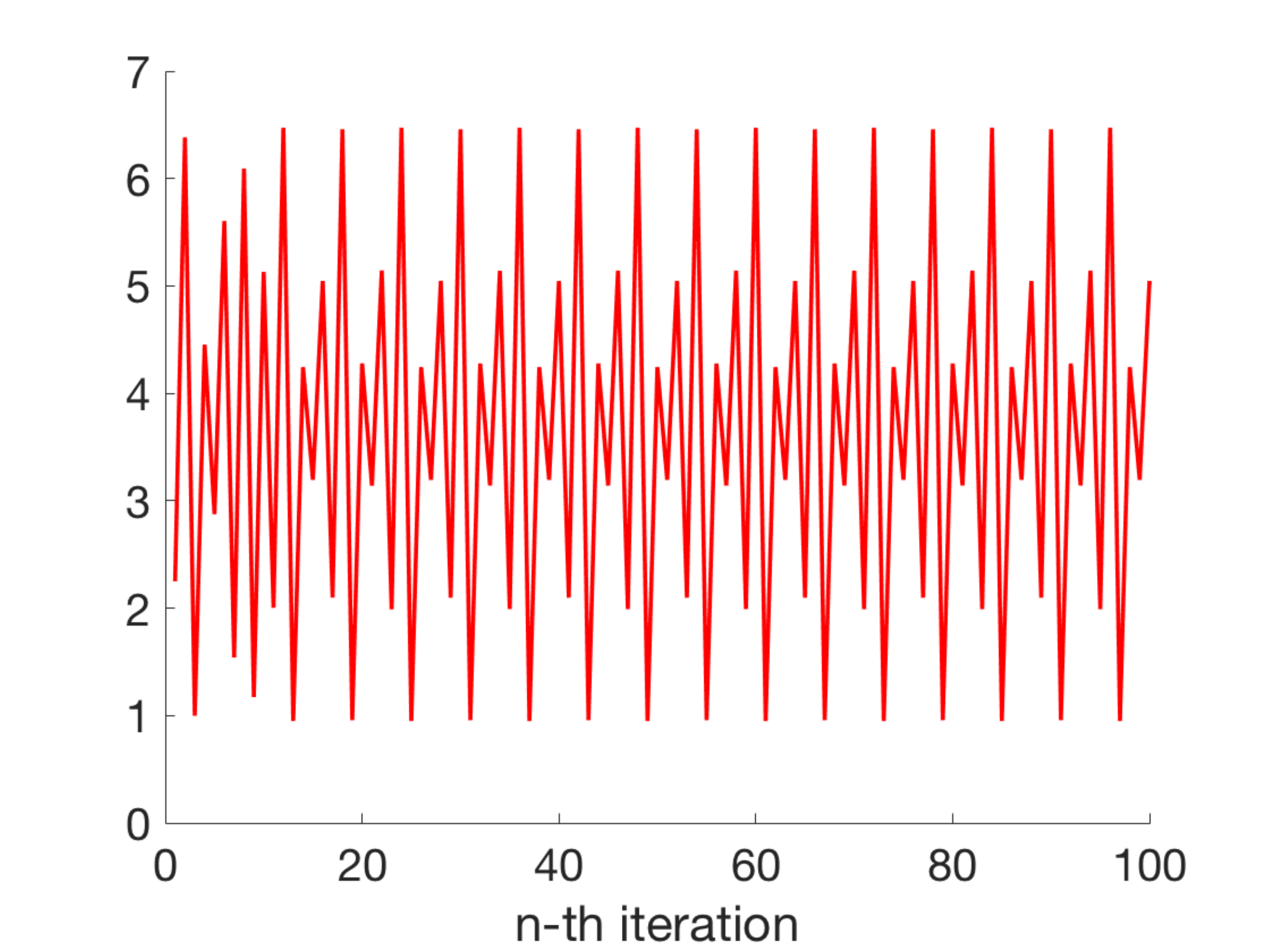}}
    \caption{Chaotic spiking with subthreshold oscillations 
    (left) and regular mixed-mode oscillations after initial
    adaptation (right)
    in 1D Chialvo model (for parameter values see main text).}
    \label{fig:1DIMDynamics}
\end{figure}

It is worth notifying that MM(B)Os can play an important role in
neuron's information processing but for non-discrete models can
occur only in higher dimensions (continuous models require at
least three dimensions and for hybrid models, with continuous
subthreshold dynamics, starting from two dimensions,
see \cite{wild2}). Actually, during our numerical experiments 
with the 1D Chialvo model we have also encountered,
e.g. regular tonic spiking and phasic spiking.

Moreover, observe that, due to Theorem~\ref{thm:types},
if we restrict ourselves to the dynamical core, then
the reduced Chialvo model cannot directly exhibit bistable
behaviour (coexistence of periodic and chaotic attractors) 
for fixed values of parameters $r$ and $k$, 
because the 1D Chialvo map admits a unique metric attractor,
which in general is periodic, solenoidal or interval,
but with full probability periodic or interval
(see Prop~\ref{cor:pori}). However, chaotic
behaviour in the dynamical core can, of course, coexist
with an attracting fixed point outside the dynamical core.
Moreover, the substitute of bistability effect, i.e.,
interweaving of periodic and chaotic attractors
in the 1D model can be also observed while varying 
the parameter $r$ corresponding to 
the second (recovery) variable of the 2D model.
This is formally assured by Theorems~\ref{thm:apo} 
and~\ref{thm:ai}.

Various oscillations modes in the reduced Chialvo model, 
as we have seen, are closely related to the existence 
of periodic attractors. 
Restricting the domain of the initial conditions 
to the dynamical core, we know that there is 
at most one attracting periodic orbit 
since the map is $S$-unimodal. 
Hence if the $1D$ model exhibits periodic oscillations, 
then this (asymptotically) periodic spike pattern is unique 
among observed oscillations modes 
(for fixed parameters $r$ and $k$). 
The arrangement of smaller and larger oscillations (spikes) 
and their amplitudes depend on the itinerary 
of this attracting periodic orbit. 
In particular, period-two attracting orbit 
(with points on different sides 
of the critical point $c$) corresponds 
to tonic regular spiking 
(when each spike has the same amplitude and length 
of each interspike-interval is the same). 
However, for many parameter values the system exhibits 
also chaotic-like (not regular) oscillations 
since there are ranges of parameters when the map is chaotic. 
Therefore determining values of parameters corresponding 
to various oscillations modes firstly demands providing 
exact bifurcation diagram of 
the map $f_{r,k}$ in the $(r,k)$-plane. 
This, however, would be a very challenging task 
since even with one parameter fixed the system undergoes 
cascades of rapid bifurcations. 
This is exemplified in Figure~\ref{fig:bif_diagram}. 
In fact, left panel of this figure allows to conclude, 
approximately, values of $r$ (for $k=0.05$ fixed) corresponding 
to resting and periodic oscillations with period $2$, $4$, $8$ 
(and perhaps more) as well as ranges of chaotic 
(or very large period) oscillations. 
The second panel of this figure provides analogous information 
for varying $k$ and $r=2.6$ fixed. 
Note, however, that in all the cases, 
small change of the parameter might cause onset 
of chaos or quite sudden disappearance of chaotic behaviour. 
Approximate regions in $(r,k)$-parameter space 
when chaotic oscillations might occur might be deduced 
from the red area depicted in Figure~\ref{fig:rvsk}.

\subsection{Remarks on the 2D Chialvo model}

Although, this work is concerned with 1D reduction of the Chialvo model, 
for completeness let us discuss some properties of the system 
\eqref{eq:Chialvo1}-\eqref{eq:Chialvo2} 
which might indicate interesting pathways for further studies.

\begin{figure}[!b]
\centering{
    \includegraphics[scale=0.4, 
    trim= 30mm 0mm 0mm 0mm]{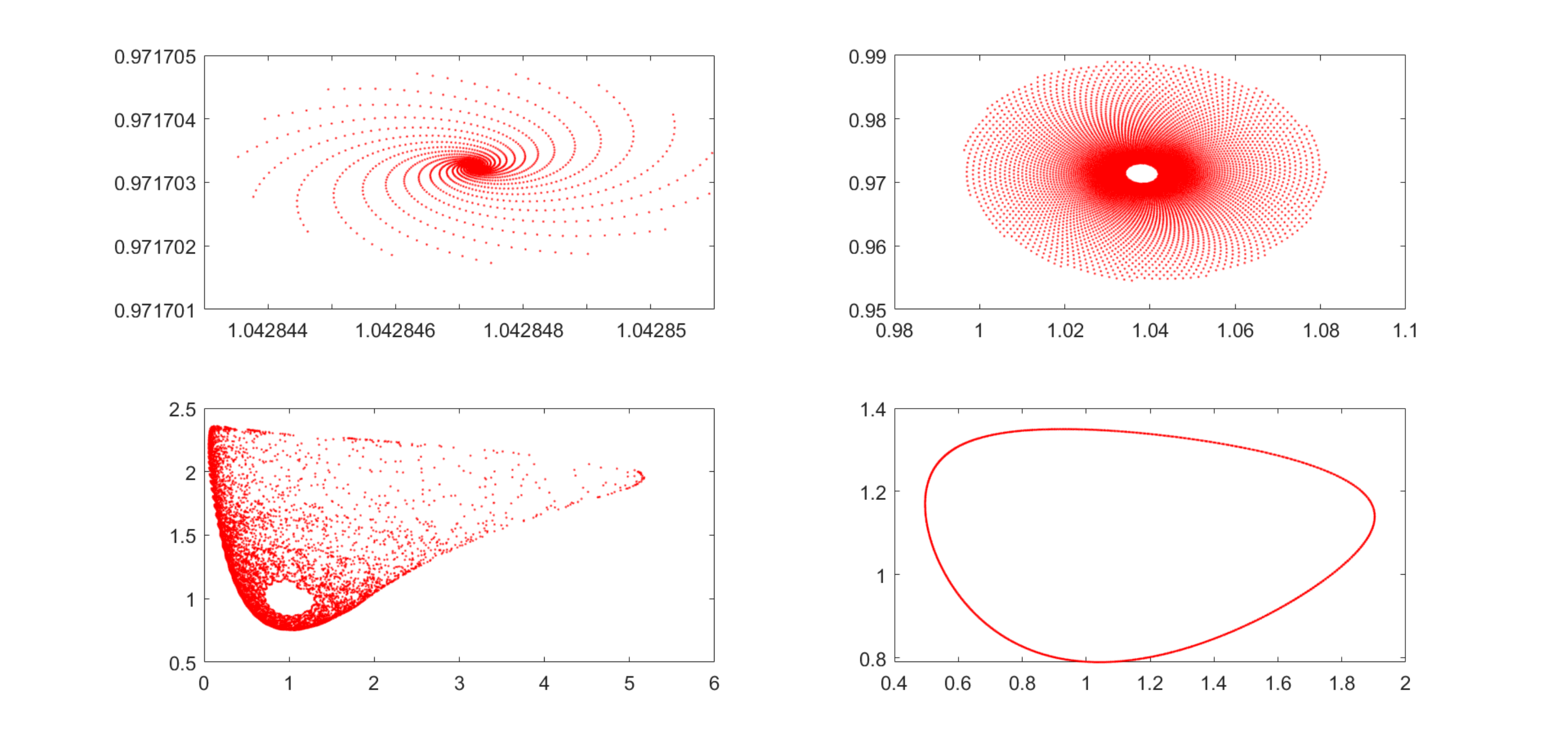}}
    \caption{Numerical attractors in the 2D Chialvo model
    for the fixed values of $a=0.89$, $c=0.28$, $k=0.03$
    and four different values of $b:0.166,\,0.1668,\,
    0.169,\,0.1738$ (clockwise from upper left corner)}
    \label{fig:attractors}
\end{figure} 

The work of Chialvo \cite{chialvo1995} discusses the case 
of $k=0$ and $k$ of small positive value keeping fixed values 
of parameters $a$ and $c$ and only varying $b$ when needed. 
The parameter $k$ is treated as the bifurcation parameter. 
As mentioned in Introduction, for $k=0$, 
the point $(x_{\textrm{f}\,0},y_{\textrm{f}\,0}):=(0, c/(1-a))$ 
is always a stable fixed point of the system. 
For $k\neq 0$ the system \eqref{eq:Chialvo1}--\eqref{eq:Chialvo2} 
can have up to three fixed points.  

Chialvo \cite{chialvo1995} treats only the case 
when the phase portrait has exactly one equilibrium point 
and the values of parameters $a$ and $c$ are fixed as $a=0.89$ and $c=0.28$. 
Nevertheless, even in this prescribed regime of parameters 
the system features rich behaviour which is directly matched 
with activity properties of neurons. With this choice of parameters $a$ and $c$, 
small values of $k$ (e.g. $k=0.02$) and $b=0.6$ the unique fixed point 
is globally attracting and the system is \emph{quiescent-excitable} 
since it can generate an action potential when properly stimulated 
but is not able to maintain oscillatory spiking. Increasing $k$ 
(while keeping other parameters fixed) causes fixed point loses stability 
and oscillatory behaviour appears (bifurcation from quiescent-excitable to oscillatory solution).  
Obviously, the $2D$ model is also able to exhibit chaotic-like behavior 
which Chialvo \cite{chialvo1995} observed e.g. for $k=0.03$ and $b=0.18$ 
(and $a=0.89$, $c=0.28$ as stated). 
In this case the model displays chaotic mixed-mode oscillations 
(bursting oscillations with large spikes often followed 
by oscillations of smaller amplitude, see Figure~10 therein).
 
However, as we already mentioned, the system \eqref{eq:Chialvo1}--\eqref{eq:Chialvo2} 
can have up to three fixed points.  Their  existence, stability and bifurcations 
were studied in \cite{Jing2006}. In particular, the authors provided explicit conditions 
when the $2D$ Chialvo system has a given number of fixed points and theorems 
establishing the existence of fold, flip bifurcation and Hopf bifurcations. 
They have also proved the existence of chaos in the sense of Marotto. 
These analytical studies are then complemented by numerical simulations 
which in addition reveal e.g. various period-doubling bifurcations 
and different routes to chaos, sudden disappearance of chaos 
and strange chaotic and nonchaotic attractors. 
In the analytical part they treat $k$ as  a bifurcation parameter 
but interestingly allow also negative values of $k$. 
On the other hand, in the numerical part they consider also non-autonomous 
Chialvo model where the external stimulus is periodic 
and the amplitude or frequency of this input can be a bifurcation parameter.

In a recent short communication paper \cite{NewPaperOnChialvo}, 
the authors, contrary to other studies, consider  
$(a,b)$-parameter space where $c$ and $k$ are fixed. 
These studies are numerical but illustrate the existence of interesting structures.  
In particular, they suggest Neimark–Sacker bifurcation 
of the fixed point and mode locking behaviour. 
The location of parameter regions corresponding Neimark–Sacker bifurcation 
and the birth of Arnold tongues are determined analytically. 
They have also numerically identified comb-shaped periodic regions 
(corresponding to period-incrementing bifurcations) 
and shrimp-shaped structures immersed in large chaotic regions.

Finally, the recent preprint \cite{chialvo2D} undertakes the analysis 
of the 2D Chialvo model combining rigorous numerical methods 
and a topological approach based on the Conley index and Morse decompositions 
which allows to split considered range of parameters into classes of equivalent dynamics. 
However, again due to the computational complexity of this $4$-parameters dependant system, 
the authors fix $a = 0.89$ and $c = 0.28$ (similarly as in Chialvo paper \cite{chialvo1995}) 
and explore some rectangular area of $b>0$ and $k>0$. 
Their study is enriched with the analysis of chain-recurrence properties 
of sets and additionally incorporates machine learning 
for identifying parameter ranges that yield chaotic behaviour.

\begin{figure}[!ht]
\centering{
    \includegraphics[scale=0.4, 
    trim= 30mm 0mm 0mm 0mm]{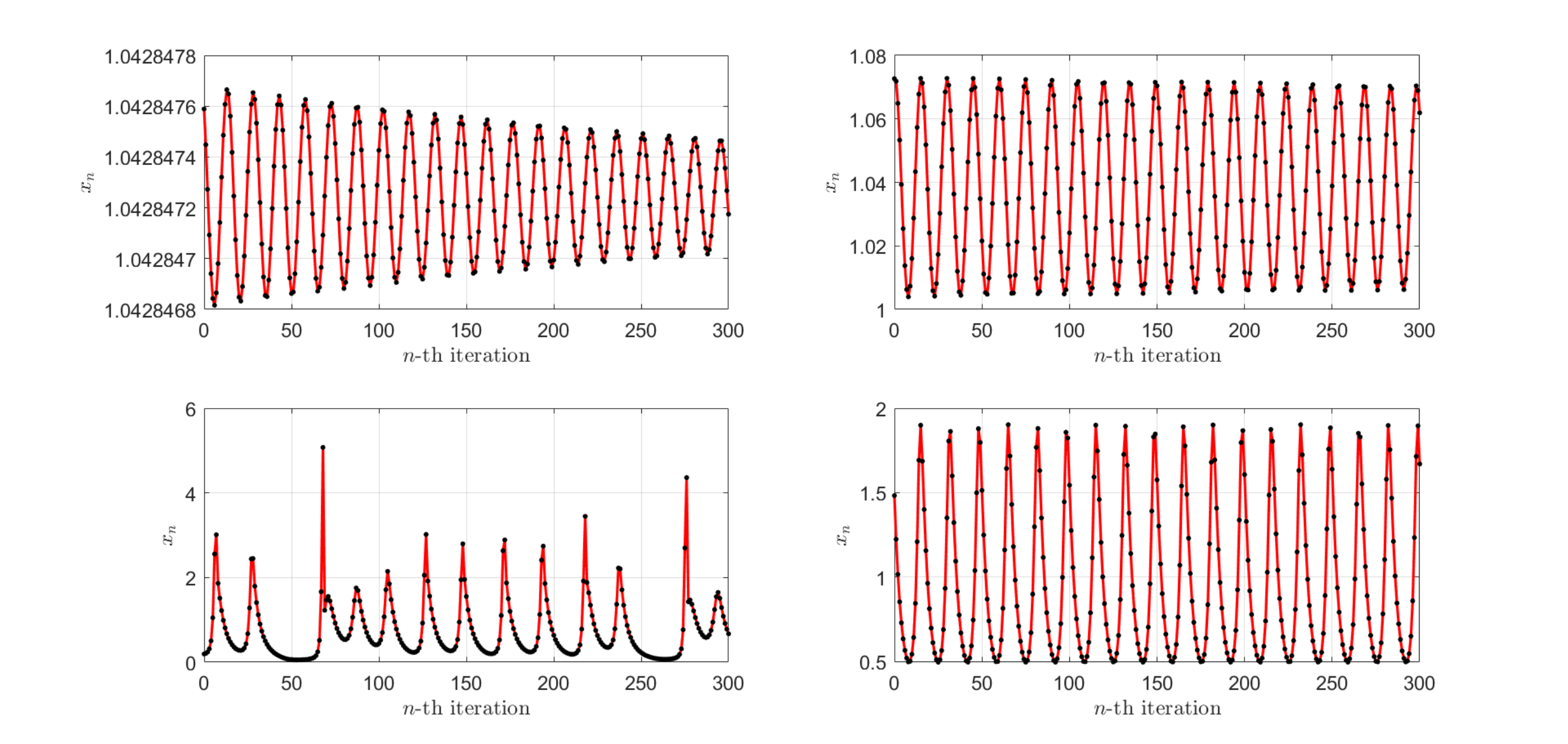}}
    \caption{Time series of the voltage $x$ 
		for the numerical attractors of the 2D Chialvo model
    from Fig.~\ref{fig:attractors}}
    \label{fig:tseriesCh}
\end{figure}

Although the current work is concerned with 1D Chialvo model, 
we have performed numerous simulations of the 2D model. 
We have decided to include only some of them in this work. 
This is due to the fact that the model is extremely sensitive to any, 
sometimes even arbitrary small, changes of parameters 
that can lead to very different behaviours. Indeed, in Fig. \ref{fig:attractors} 
we present numerically obtained representative phase portraits 
for $a=0.89$, $c=0.28$, $k=0.03$ and four different values 
of $b$ ($0.166$, $0.1668$, $0.169$, $0.1738$, respectively). 
Each picture presents a typical trajectory of the system after 
removing the transient part. Thus these pictures show various types 
of possible attractors for the 2D model. We see that increasing $b$ 
in such a small range of values, while keeping all the other parameters fixed, 
causes shifting the dynamics from the one corresponding to the attracting focus, 
through periodic limit cycle, to chaotic oscillations 
where both the interspike intervals and the number of small oscillations 
between consecutive spikes vary. 
Looking at Fig. \ref{fig:attractors} we can suspect that 
the system firstly undergoes bifurcation when 
the globally attracting fixed point loses its stability 
and the stable limit cycle appears and soon after that 
another bifurcation when asymptotically periodic dynamics 
is replaced by chaotic one. Corresponding voltage traces 
are presented in Fig. \ref{fig:tseriesCh}. 

Our brief investigation of the 2D model and the mentioned works 
\cite{chialvo1995,Jing2006,NewPaperOnChialvo,chialvo2D} provide insightful 
and valuable observations. However detailed dynamical picture 
of the 2D Chialvo model is still very incomplete. 
This is mainly due to the fact that discrete systems, 
even in low-dimenions such as the Chialvo model, yield more challenges 
in their analytical (and numerical) analysis. 
Indeed, in planar continuous systems (i.e. systems defined by ODEs) 
one has e.g. Poincar\'{e}-Bendixson Theorem which allows to draw conclusions 
on the asymptotic behaviour of trajectories 
(by describing possible types of invariant limit sets). 
In particular, in 2D continuous systems chaotic dynamics is excluded, 
whereas in discrete settings one can observe chaos even in 1 dimension 
(i.e on the interval or on the circle). The second reason is the nature 
of the Chialvo model itself, by which we mean, and as we demonstrated, 
that the model is very sensitive to even very small change of parameters 
as it happens that in the very small region of parameter space 
it might undergo not one, but a couple of bifurcations. 
This in turn makes validation of the numerical analysis 
of the model challenging and emphasis 
the need for rigorous analytical studies which, 
however, due to the above mentioned reasons are especially hard.    
Therefore our aim was to show that even the reduced Chialvo model 
can be interesting for the mathematical analysis and not necessarily 
less plausible than its full version.

\section*{Discussion} 
We have performed detailed analysis of the reduced Chialvo model
\eqref{eq:Chialvo1DIM} which describes the evolution of 
the membrane voltage when recovery variable is frozen, 
i.e., becomes a parameter. Let us briefly summarize 
these results here.

Firstly, we established that the model \eqref{eq:Chialvo1DIM} 
can be seen as iterations of the S-unimodal map $f_{r}$, 
which opened the way to prove its various properties with 
the use of the powerful theory of S-unimodal maps 
(and, more general, maps with negative Schwarzian derivative). 
In Theorems \ref{thm:flip}, \ref{thm:fold} and \ref{thm:foldk} 
we rigorously showed that the model undergoes flip and 
fold bifurcations of fixed points when $r$ is 
a bifurcation parameter and a fold bifurcation 
with respect to $k$ parameter, providing formulas for 
the bifurcation parameter values and fixed points involved. 
In particular, due to the fact that Schwarzian derivative 
is negative, all the flip bifurcations are supercritical. 

Next, discussing different configurations of the map 
with respect to the number and location of its fixed points 
and precising what should be treated as dynamical core of the map 
we justified that the map can be in fact viewed as an S-unimodal map.
Moreover, Lemmas \ref{lem:DynCore} and \ref{lem:UnstableFP} 
asserts that for wide range of $(r,k)$ parameter space in 
the dynamical core of the map there is only one fixed point 
which is unstable. This instability of the fixed point played role 
in justifying the existence of Misiurewicz maps in the family
$\{f_r\}$ (with $k$ fixed) and showing that for positive Lebesgue
measure set of parameters these maps admit an absolutely continuous
invariant probability measure (acip) with positive Lyapunov exponent
for almost all initial conditions (see Theorem \ref{thm:ai} and
following discussion). In all these cases, the map has an interval
attractor which is the support of the acip. This situation is often
referred to as metric chaos. The effective complementary case 
(see Proposition \ref{cor:pori}) is when the map $f_r$  
has a periodic attractor which is then necessarily unique 
(Theorem \ref{mainChaos1}). This fact has some implications for 
the numerics, i.e. in the regime of the existence of stable periodic
orbit, its uniqueness and the fact that it attracts almost 
all initial conditions, guarantees that by randomly choosing initial
point $x_0$ and iterating it under the one-dimensional Chialvo map 
we can reconstruct this periodic attractor. Abundance of 
this periodic case is captured in Theorem \ref{thm:apo}. 
On the other hand, Theorem \ref{thm:three} gives 
the combinatorial condition for the map to be chaotic 
in the sense of Li and Yorke, Block and Coppel, and Devaney.
Proposition \ref{prop:topchaos} and further numerical analyses 
estimate the subset of $(r,k)$ parameter space where 
this topological chaos occurs. 

As argued in some references and mentioned in Introduction, 
the equation \eqref{eq:Chialvo1DIM} sometimes qualitatively
approximates the dynamics of voltage-variable in the full model
\eqref{eq:Chialvo1}--\eqref{eq:Chialvo2}, especially when 
it can be viewed as a slow-fast system with the slow variable $y$,
virtually frozen or acting as a slowly varying bifurcation parameter
of the fast subsystem. However, the form of the full model
\eqref{eq:Chialvo1}--\eqref{eq:Chialvo2} does not allow for 
an easy and explicit separation of time-scales unless the $a$, $b$ 
and $c$ parameter values are specifically chosen. 
Therefore, although the slow-fast nature of the system 
for some parameters' values is reflected in numerical simulations 
(see e.g. \cite{chialvo1995}), rigorous application of singular
perturbation theory for this model and, perhaps, revealing
implications of our finding for the full model, 
can be a separate task for future research. 
Another approach can rely on using the theory of discrete-time
nonautonomous dynamical systems (see e.g. \cite{potzsche}) 
for inferring the $x$-dynamics in the full system.
However, as revealed e.g. 
by the numerical experiments related to Figure
\ref{fig:NEW_Review}, sometimes the behaviour 
of the reduced model might be more complicated 
than that of the 2D model within the corresponding area 
of the phase plane. This of course depends on the parameter
choice in the 2D model but also, together with the variety 
of activity modes which the model is able to display 
(as illustrated e.g. in Figures \ref{fig:BurstingPeriod4},
\ref{fig:KillingBursting} and \ref{fig:1DIMDynamics}), 
serves in favour of the argument that 1D model might be 
treated as the independent model and that, perhaps, 
it would be worth considering other variants of 
the 2D model with the same equation for the membrane voltage 
but different types of the recovery dynamics. In any case, 
the starting point for creating such new variants of 
the Chialvo model would be to fully understand the evolution 
of the membrane voltage given by \eqref{eq:Chialvo1DIM}, 
which this paper aims at. 

Moreover, as pointed also in
\cite{ibarz2011}, the Chialvo model does not fit exactly 
into the scheme of slow-fast neuron models with 
$x$ being the fast variable and $y$ the slow variable 
(in fact, $y$ might even represent the fast, 
not the slow, variable), because the impact of $y$ 
onto the voltage evolution equation is not additive 
but multiplicative and thus changing $y$ does not simply 
shifts the map $f$ upward or downward. 
Therefore understanding the bifurcation structure of 
the map $f_{r,k}$, especially with respect to $r$ parameter, 
is very important and was another motivation behind 
this work. 

On the other hand, providing complete description of 
the dynamics of the full model
\eqref{eq:Chialvo1}--\eqref{eq:Chialvo2}
(corresponding to different parameters configuration in 
the $(a,b,c,d, k)$ space, or its $(b,k)$ subspace), 
seems challenging. The works \cite{chialvo1995} 
and \cite{Jing2006} gave some initial results in 
this direction but this extensive task demands 
other supporting methods, for instance, 
topological and computational dynamics tools 
(identifying invariant sets and their Conley
indices) and might be a base for future works. 
From that point of view, 
presented here, rather complete, dynamical description of 
the somehow independent 1D model seems to be very 
natural and useful.

\section*{Acknowledgments}
Frank Llovera Trujillo and Justyna Signerska-Rynkowska 
were supported by NCN (National Science Centre, Poland) 
grant no.~2019/35/D/ST1/02253. 
Frank Llovera Trujillo was also supported 
by Polonium International Doctoral Fellowships 
(1st edition) scholarship awarded by 
Gda\'nsk University of Technology within 
``Initiative of Excellence - Research University'' program.  
Justyna Signerska-Rynkowska also acknowledges 
the support of Dioscuri program initiated 
by the Max Planck Society, jointly managed with 
the National Science Centre (Poland), and mutually funded 
by the Polish Ministry of Science and Higher Education 
and the German Federal Ministry of Education and Research.

\subsection*{Conflict of interest}

The authors declare no potential conflict of interests.

\subsection*{ORCID}
Frank Llovera Trujillo\,\orcidlink{0000-0001-5979-3584}
\url{https://orcid.org/0000-0001-5979-3584} \\
Justyna Signerska-Rynkowska\,\orcidlink{0000-0002-9704-0425} \url{https://orcid.org/0000-0002-9704-0425}\\
Piotr Bart\l omiejczyk\,\orcidlink{0000-0001-5779-4428} \url{https://orcid.org/0000-0001-5779-4428}

\bibliography{main}

\begin{thebibliography}{10}
\providecommand \doibase [0]{http://dx.doi.org/}%

\bibitem{Hodgkin}
Hodgkin AL, Huxley AF. A quantitative description of membrane current and its
  application to conduction and excitation in nerve. {\it The Journal of
  Physiology} 1952\string; 117(4)\string: 500-544.
\newblock \href {\doibase 10.1113/jphysiol.1952.sp004764} {doi:
  10.1113/jphysiol.1952.sp004764}

\bibitem{MorrisLecar}
{Morris} C, Lecar H. {Voltage oscillations in the barnacle giant muscle fiber}.
  {\it Biophysical Journal} 1981\string; 35(1)\string: 193-213.
\newblock \href {\doibase 10.1016/S0006-3495(81)84782-0} {doi:
  10.1016/S0006-3495(81)84782-0}

\bibitem{Fitz}
FitzHugh R. Impulses and Physiological States in Theoretical Models of Nerve
  Membrane. {\it Biophysical Journal} 1961\string; 1(6)\string: 445-466.
\newblock \href {\doibase 10.1016/S0006-3495(61)86902-6} {doi:
  10.1016/S0006-3495(61)86902-6}

\bibitem{Nagumo}
Nagumo J, Arimoto S, Yoshizawa S. An Active Pulse Transmission Line Simulating
  Nerve Axon. {\it Proceedings of the IRE} 1962\string; 50(10)\string:
  2061-2070.
\newblock \href {\doibase 10.1109/JRPROC.1962.288235} {doi:
  10.1109/JRPROC.1962.288235}

\bibitem{BretteGerstner}
Brette R, Gerstner W. Adaptive Exponential Integrate-and-Fire Model as an
  Effective Description of Neuronal Activity. {\it Journal of Neurophysiology}
  2005\string; 94(5)\string: 3637-3642.
\newblock \href {\doibase 10.1152/jn.00686.2005} {doi: 10.1152/jn.00686.2005}

\bibitem{izi2003}
Izhikevich E. Simple model of spiking neurons. {\it IEEE Transactions on Neural
  Networks} 2003\string; 14(6)\string: 1569-1572.
\newblock \href {\doibase 10.1109/TNN.2003.820440} {doi:
  10.1109/TNN.2003.820440}

\bibitem{wild2}
Rubin JE, Signerska-Rynkowska J, Touboul JD, Vidal A. Wild oscillations in a
  nonlinear neuron model with resets: (II) {M}ixed-mode oscillations. {\it
  Discrete and Continuous Dynamical Systems - B} 2017\string; 22(10)\string:
  4003-4039.
\newblock \href {\doibase 10.3934/dcdsb.2017205} {doi: 10.3934/dcdsb.2017205}

\bibitem{touboul2009}
Touboul J, Brette R. Spiking Dynamics of Bidimensional Integrate-and-Fire
  Neurons. {\it SIAM Journal on Applied Dynamical Systems} 2009\string;
  8(4)\string: 1462-1506.
\newblock \href {\doibase 10.1137/080742762} {doi: 10.1137/080742762}

\bibitem{chialvo1995}
Chialvo DR. Generic excitable dynamics on a two-dimensional map. {\it Chaos,
  Solitons \& Fractals} 1995\string; 5(3)\string: 461-479.
\newblock \href {\doibase 10.1016/0960-0779(93)E0056-H} {doi:
  10.1016/0960-0779(93)E0056-H}

\bibitem{cazelles2001}
Cazelles B, Courbage M, Rabinovich M. Anti-phase regularization of coupled
  chaotic maps modelling bursting neurons. {\it Europhysics Letters}
  2001\string; 56(4)\string: 504.
\newblock \href {\doibase 10.1209/epl/i2001-00548-y} {doi:
  10.1209/epl/i2001-00548-y}

\bibitem{rulkov2001}
Rulkov NF. Regularization of Synchronized Chaotic Bursts. {\it Phys. Rev.
  Lett.} 2001\string; 86\string: 183--186.
\newblock \href {\doibase 10.1103/PhysRevLett.86.183} {doi:
  10.1103/PhysRevLett.86.183}

\bibitem{rulkov2002}
Rulkov NF. Modeling of spiking-bursting neural behavior using two-dimensional
  map. {\it Phys. Rev. E} 2002\string; 65\string: 041922.
\newblock \href {\doibase 10.1103/PhysRevE.65.041922} {doi:
  10.1103/PhysRevE.65.041922}

\bibitem{ShilnikovRulkov2004}
Shilnikov AL, Rulkov NF. Subthreshold oscillations in a map-based neuron model.
  {\it Physics Letters A} 2004\string; 328(2)\string: 177-184.
\newblock \href {\doibase 10.1016/j.physleta.2004.05.062} {doi:
  10.1016/j.physleta.2004.05.062}

\bibitem{courbage2007}
Courbage M, Nekorkin VI, Vdovin LV. Chaotic oscillations in a map-based model
  of neural activity. {\it Chaos: An Interdisciplinary Journal of Nonlinear
  Science} 2007\string; 17(4)\string: 043109.
\newblock \href {\doibase 10.1063/1.2795435} {doi: 10.1063/1.2795435}

\bibitem{ZhongEtAll2017}
Zhong J, Zhang L, Tigan G. Bifurcations of a discrete-time neuron model. {\it
  Journal of Difference Equations and Applications} 2017\string; 23(9)\string:
  1508-1528.
\newblock \href {\doibase 10.1080/10236198.2017.1339698} {doi:
  10.1080/10236198.2017.1339698}

\bibitem{courbage2010}
Courbage M, Nekorkin VI. Map based models in neurodynamics. {\it International
  Journal of Bifurcation and Chaos} 2010\string; 20(06)\string: 1631-1651.
\newblock \href {\doibase 10.1142/S0218127410026733} {doi:
  10.1142/S0218127410026733}

\bibitem{ibarz2011}
Ibarz B, Casado J, Sanjuán M. Map-based models in neuronal dynamics. {\it
  Physics Reports} 2011\string; 501(1)\string: 1-74.
\newblock \href {\doibase 10.1016/j.physrep.2010.12.003} {doi:
  10.1016/j.physrep.2010.12.003}

\bibitem{Jing2006}
Jing Z, Yang J, Feng W. Bifurcation and chaos in neural excitable system. {\it
  Chaos, Solitons \& Fractals} 2006\string; 27(1)\string: 197-215.
\newblock \href {\doibase 10.1016/j.chaos.2005.04.060} {doi:
  10.1016/j.chaos.2005.04.060}

\bibitem{NewPaperOnChialvo}
Wang F, Cao H. Mode locking and quasiperiodicity in a discrete-time Chialvo
  neuron model. {\it Communications in Nonlinear Science and Numerical
  Simulation} 2018\string; 56\string: 481-489.
\newblock \href {\doibase 10.1016/j.cnsns.2017.08.027} {doi:
  10.1016/j.cnsns.2017.08.027}

\bibitem{Rinzel1987}
Rinzel J. {\it A Formal Classification of Bursting Mechanisms in Excitable
  Systems}\string: 267--281; Berlin, Heidelberg: Springer Berlin Heidelberg .
\newblock 1987

\bibitem{iziHoppen2004}
Izhikevich EM, Hoppensteadt F. Classification of Bursting Mappings. {\it
  International Journal of Bifurcation and Chaos} 2004\string; 14(11)\string:
  3847-3854.
\newblock \href {\doibase 10.1142/S0218127404011739} {doi:
  10.1142/S0218127404011739}

\bibitem{BertramEtAll1995}
Bertram R, Butte MJ, Kiemel T, Sherman A. Topological and phenomenological
  classification of bursting oscillations. {\it Bulletin of Mathematical
  Biology} 1995\string; 57(3)\string: 413-439.
\newblock \href {\doibase 10.1016/S0092-8240(05)81776-8} {doi:
  10.1016/S0092-8240(05)81776-8}

\bibitem{Desroches2022}
Desroches M, Rinzel J, Rodrigues S. Classification of bursting patterns: A tale
  of two ducks. {\it PLOS Computational Biology} 2022\string; 18(2)\string:
  1-32.
\newblock \href {\doibase 10.1371/journal.pcbi.1009752} {doi:
  10.1371/journal.pcbi.1009752}

\bibitem{izi2000}
Izhikevich EM. Neural excitability, spiking and bursting. {\it International
  Journal of Bifurcation and Chaos} 2000\string; 10(06)\string: 1171-1266.
\newblock \href {\doibase 10.1142/S0218127400000840} {doi:
  10.1142/S0218127400000840}

\bibitem{wild1}
Rubin JE, Signerska-Rynkowska J, Touboul JD, Vidal A. Wild oscillations in a
  nonlinear neuron model with resets: (I) {B}ursting, spike-adding and chaos.
  {\it Discrete and Continuous Dynamical Systems - B} 2017\string;
  22(10)\string: 3967-4002.
\newblock \href {\doibase 10.3934/dcdsb.2017204} {doi: 10.3934/dcdsb.2017204}

\bibitem{May1976}
May R. Simple mathematical models with very complicated dynamics. {\it Nature}
  1976\string; 261\string: 459-467.
\newblock \href {\doibase 10.1038/261459a0} {doi: 10.1038/261459a0}

\bibitem{Cohen1995}
Cohen JE. Unexpected dominance of high frequencies in chaotic nonlinear
  population models. {\it Nature} 1995\string; 378\string: 610-612.
\newblock \href {\doibase 10.1038/378610a0} {doi: 10.1038/378610a0}

\bibitem{Brauer2001}
Brauer F, Castillo-Chávez C. {\it Mathematical Models in Population Biology
  and Epidemiology}. 40.
\newblock New York Dordrecht Heidelberg London: Springer .
\newblock 2001

\bibitem{thunberg2001}
Thunberg H. Periodicity versus Chaos in One-Dimensional Dynamics. {\it SIAM
  Review} 2001\string; 43(1)\string: 3--30.
\newblock \href {\doibase 10.1137/S0036144500376649} {doi:
  10.1137/S0036144500376649}

\bibitem{Thieme+2018}
Thieme HR. {\it Mathematics in Population Biology}.
\newblock Princeton: Princeton University Press .
\newblock 2018

\bibitem{misiurewicz_1981}
Misiurewicz M. On iterates of $e^z$. {\it Ergodic Theory and Dynamical Systems}
  1981\string; 1(1)\string: 103–106.
\newblock \href {\doibase 10.1017/S014338570000119X} {doi:
  10.1017/S014338570000119X}

\bibitem{Schleicher2010}
Schleicher D. {\it Dynamics of Entire Functions}\string: 295--339; Berlin,
  Heidelberg: Springer Berlin Heidelberg .
\newblock 2010

\bibitem{Alhamed2022}
Alhamed M, Rempe L, Sixsmith D. Geometrically finite transcendental entire
  functions. {\it Journal of the London Mathematical Society} 2022\string;
  106\string: 485-527.
\newblock \href {\doibase 10.1112/jlms.12516} {doi: 10.1112/jlms.12516}

\bibitem{Singer1978}
Singer D. Stable Orbits and Bifurcation of Maps of the Interval. {\it SIAM
  Journal on Applied Mathematics} 1978\string; 35(2)\string: 260-267.
\newblock \href {\doibase 10.1137/0135020} {doi: 10.1137/0135020}

\bibitem{kuznetsov}
Kuznetsov YA. {\it Elements of applied bifurcation theory}.
\newblock Appl. Math. Sci.$\ $New York: Springer-Verlag .
\newblock 2004.

\bibitem{AulbachKieninger2001}
Aulbach B, Kieninger B. Periodicity versus Chaos in One-Dimensional Dynamics.
  {\it Nonlinear Dyn. Syst. Theory} 2001\string; 1(1)\string: 23--37.

\bibitem{devaney2003}
Devaney RL. {\it An introduction to chaotic dynamical systems}.
\newblock Studies in Nonlinearity$\ $Boulder: Westview Press .
\newblock 2003.
\newblock Reprint of the second (1989) edition.

\bibitem{schreiber2007}
Schreiber SJ. Periodicity, persistence, and collapse in host–parasitoid
  systems with egg limitation. {\it Journal of Biological Dynamics}
  2007\string; 1(3)\string: 273-287.
\newblock \href {\doibase 10.1080/17513750701450235} {doi:
  10.1080/17513750701450235}

\bibitem{kozlovski2003}
Kozlovski OS. Axiom A maps are dense in the space of unimodal maps in the $C^k$
  topology. {\it Ann. of Math} 2003\string; 157(1)\string: 1-43.
\newblock \href {\doibase 10.4007/annals.2003.157.1} {doi:
  10.4007/annals.2003.157.1}

\bibitem{AvilaetAll2003}
Avila A, Lyubich M, de{\ }Melo W. Regular or stochastic dynamics in real
  analytic families of unimodal maps.. {\it Invent. math.} 2003\string;
  154\string: 451-550.
\newblock \href {\doibase 10.1007/s00222-003-0307-6} {doi:
  10.1007/s00222-003-0307-6}

\bibitem{LiYorke1975}
Li TY, Yorke JA. Period Three Implies Chaos. {\it The American Mathematical
  Monthly} 1975\string; 82(10)\string: 985--992.
\newblock \href {\doibase 10.2307/2318254} {doi: 10.2307/2318254}

\bibitem{BlockCoppel1992}
Block LS, Coppel WA. {\it Dynamics in one dimension}.
\newblock Lecture Notes in Mathematics.$\ $Berlin: Springer-Verlag .
\newblock 1992

\bibitem{dataset}
Llovera F, Bartłomiejczyk P, Signerska-Rynkowska J. Parameter values for
  topological chaos in the reduced Chialvo model.
  \url{https://doi.org/10.34808/by1g-e883};  2021.
\newblock Gdańsk University of Technology. [Dataset]

\bibitem{Luzzatto2006}
Luzzatto S, Takahasi H. Computable conditions for the occurrence of non-uniform
  hyperbolicity in families of one-dimensional maps. {\it Nonlinearity}
  2006\string; 19(7)\string: 1657.
\newblock \href {\doibase 10.1088/0951-7715/19/7/013} {doi:
  10.1088/0951-7715/19/7/013}

\bibitem{chialvo2D}
Pilarczyk P, Signerska-Rynkowska J, Graff G. Topological-numerical analysis of
  a two-dimensional discrete neuron model. preprint arXiv:2209.03443[math.DS];
  2022.

\bibitem{potzsche}
Kloeden PE, P{\"o}tzsche C, Rasmussen M. {\it Discrete-Time Nonautonomous
  Dynamical Systems}\string: 35--102; Berlin, Heidelberg: Springer Berlin
  Heidelberg .
\newblock 2013

\end{thebibliography}

\appendix 

\section{Proofs of results on
the dynamical core}\label{appendix:core}

\begin{proof}[Proof of Lemma~\ref{lem:DynCore}]
The second inequality $c<f_{r,k}(c)$ is straightforward 
and moreover if we demand only this inequality 
it is enough to set $r^*(k)=r^*(0)$ for any $k\geq 0$. 
In order to prove the first inequality we compute
$f_{r,k}^2(c)=(4\exp(r-2)+k)^2\exp\{r-4\exp(r-2)-k\}+k$, 
which can be further expanded as
\[ 
f_{r,k}^2(c)  = \left\{
16\exp(3r-4-k)+
8k\exp(2r-2-k)+ 
k^2\exp(r-k)\right\}\exp\{-4\exp(r-2)\}+k. 
\]
Since for large $r$ (with fixed $k\in [0,2)$) 
\[
\exp\{-4\exp(r-2)\}\leq 
\min{\{\exp(-4r+4+k), \exp(-3r+2+k), \exp(-2r+k)\}}, 
\]
we have 
\[
f_{r,k}^2(c) \leq\left\{16+8k+k^2\right\}\exp(-r) +k<2 
\]
for sufficiently large $r$, which proves \eqref{LemmaDynCoreEq}.  

Note that the second statement about the unique fixed point in 
the interval $[f_{r,k}^2(c),f_{r,k}(c)]$ means that there are 
no fixed points $x_f$ on the increasing part of the graph 
of $f_{r,k}$ with $x_f\geq f_{r,k}^2(c)$ (as necessarily 
the interval $[f_{r,k}^2(c),f_{r,k}(c)]$ contains 
the fixed point $x_f>2$).  Thus let $0<k<2$. 
In this case for $r$ large enough and $k\leq x\leq 2$ 
the following sequence of inequalities holds:
\[ 
\exp(r-x)\geq \exp(r-2)>\frac{1}{k}-\frac{k}{4}\geq \frac{1}{x}-\frac{k}{x^2}=\frac{x-k}{x^2}, 
\]
which implies
\[
f_{r,k}(x)=x^2\exp(r-x)+k>x.
\]
Consequently, for large  $r$ enough there are no fixed points in
$[k,2]$. However, $x^2\exp(r-x)+k >x$ for all $0\leq x<k$,
independently of $r$, yielding that for $k>0$ and sufficiently 
large $r$ there are no fixed points in $[0,2]$ and completing 
the proof. 
\end{proof}

\begin{proof}[Proof of Lemma~\ref{lem:UnstableFP}]
Since it is clear that for any fixed $0\leq k<2$ and $r=r(k)$ 
large enough there is a fixed point located in 
the right subinterval $[c,f_{r,k}^2(c)]$, it remains to show 
that the fixed point $x_f>2$ is unstable for all $r$ large enough. 

Let $0\leq k<2$ be fixed and assume that $x_f>2$ is a fixed point 
of $f=f_{r,k}$, where $r$ is some parameter value. 
We have to show that 
$\vert f^{\prime}(x_f)\vert = x_f(x_f-2)\exp(r-x_f)>1$ 
for $r$ large enough (note that the value of $x_f$ 
also changes with $r$). Since the fixed point $x_f$ satisfies
$x_f=x_f^2\exp(r-x_f)+k$, we only need to show that
$(x_f-k)(x_f-2)/(x_f)>1$  (note that $x_f>2>k$). 
This last inequality is satisfied when
\begin{equation}\label{eqpom1}
x_f<\frac{3+k-\sqrt{(k-1)^2+8}}{2}
\end{equation}
or
\begin{equation}\label{eqpom2}
x_f>\frac{3+k+\sqrt{(k-1)^2+8}}{2}.
\end{equation}
The inequality \eqref{eqpom1} must be rejected as we are interested 
in $x_f>2$ and this contradicts \eqref{eqpom1}. 
Instead, let us focus on the inequality \eqref{eqpom2}. 
From Theorem \ref{thm:flip} on the period doubling bifurcation 
it follows that there exists a parameter value $r_1$ such that
$x_f=x_{f,r_1}>2$ is an unstable fixed point of $f=f_{r_1,k}$. 
Thus necessarily $x_{f,r_1}$ satisfies \eqref{eqpom2}. 
Let $r_2>r_1$. Then $x_{f,r_2}>x_{f,r_1}$, where $x_{f,r_2}$ is 
a fixed point of $f_{r_2,k}$, located in the subinterval
$[2,f_{r_2,k}(2)]$. Indeed, if $x_f>2$ is a fixed point 
of $f_{r,k}$ then $\exp(r)=(x_f^{-1}-k x_{f}^{-2})\exp(x_f)$. 
The left hand side of this equation is an increasing function 
of $r$ and the function $(x^{-1}-k x^{-2})\exp(x)$ is 
an increasing function of $x$ ($k$ is kept constant). 
Thus increasing $r$, moves the fixed point $x_f>2$ further to 
the right. It follows that $x_{f,r_2}$ also satisfies 
\eqref{eqpom2} and thus the fixed point $x_{f,r_2}$ 
is also unstable. 
\end{proof}

\section{Proof of the condition
for topological chaos}\label{appendix:chaos}

\begin{proof}[Proof of Proposition~\ref{prop:topchaos}]
To show that
\[
f_r^2(2)<f_r^3(2)<2<f_r(2)
\quad\text{for $2.6\leq r \leq 2.9$},
\]
we need only consider three cases:\vspace{1mm}

\noindent\emph{Case I: \fbox{$2<f_r(2)$.}}
One can check that $2<f_r(2)$ if{f} $r>2-\ln(2)\approx1.3$. 
But we assume $r\geq 2.6$. 
\vspace{1mm}

\noindent\emph{Case II: \fbox{$f_r^3(2)<2$.}}
An easy computation shows that 
\[
f_r^3(2)=256\exp(7r-8-8\exp(r-2)-16\exp(3r-4-4\exp(r-2))).
\]
Let us define $h(r)=f_r^3(2)-2$. Then 
\[
\frac{dh}{dr}=
256s(r)\exp(7r-8-8\exp(r-2)-16\exp(3r-4-4\exp(r-2))),
\]
where 
$s(r)=7-8\exp(r-2)+64\exp(4r-6-4\exp(r-2))-48\exp(3r-4-4\exp(r-2))$.
Assume that $r\ge2.6$. Since $\exp(r-2)\ge\exp(0.6)>1.8$, we have
\begin{itemize}
    \item $7-8\exp(r-2)<-7.4$,
    \item $64\exp(4r-6-4\exp(r-2))<64\exp(-2.8)<3.9$.
\end{itemize}
Hence $s(r)<0$ for $r\ge2.6$ and, in consequence, $h$ is decreasing
in this range. As $h(2.6)\approx -0.027<0$,
we obtain $h(r)<0$ for $r\ge2.6$, which is our claim in this case.
\vspace{1mm}

\noindent\emph{Case III: \fbox{$f_r^2(2)<f_r^3(2)$.}}
Similar considerations applied to the function 
$g(r)=f_r^2(2)-f_r^3(2)$ on $[2.6,2.9]$ give 
$g(r)<0$ for $2.6\leq r \leq 2.9$,
and the proof is complete.
\end{proof}

\end{document}